\documentclass[a4paper, twoside, 11pt, english]{article}

\usepackage[T1]{fontenc}
\usepackage[utf8]{inputenc}
\usepackage{etoolbox}
\usepackage{amsmath,amsthm,amssymb,stmaryrd}
\usepackage{mathtools}
\usepackage[ttscale=.875]{libertine}
\usepackage[libertine]{newtxmath}
\usepackage[numbers,sort&compress]{natbib}
\usepackage{comment}
\usepackage{ifthen}
\usepackage{algpseudocode}
\usepackage{algorithm2e}
\usepackage{multicol}
\usepackage{multirow}
\usepackage{fancyhdr, titlesec, url, enumerate, microtype,setspace}
\usepackage{tikz-qtree,varwidth}
\usepackage{youngtab}
\usepackage{ytableau}
\usepackage[english]{babel}
\usepackage{hyperref}
\usepackage[all,knot,poly]{xy}
\usepackage{tikz, tikz-3dplot, pgfplots}
\usepackage[normalem]{ulem}
\usepackage{graphicx} 
\usepackage{float}
\usetikzlibrary{decorations.pathreplacing}
\usetikzlibrary[positioning, patterns]
\usetikzlibrary{arrows.meta}

\newcommand{\caseconfluence}[1]{%
\[
\scalebox{0.9}{$#1$}
\]
}

\newcommand{\npath}[1]{\Lambda_{#1}}

\usetikzlibrary{calc}
\CompileMatrices
\titleformat{\section}[block]
  {\scshape\filcenter\Large}{\thesection.}{.5em}{}
\titleformat{\subsection}[runin]
  {\bfseries}{\thesubsection.}{.5em}{}[.]
\titleformat{\subsubsection}[runin]
  {\bfseries}{\thesubsubsection.}{.5em}{}[.]
\titlespacing{\subsubsection}{0pt}{10pt}{.5em}

\newtheoremstyle{ntheorem}
  {\topsep}
  {\topsep}
  {\itshape}
  {0pt}
  {\bfseries}
  {.}
  {.5em}
  {\thmname{#1}\thmnumber{ #2}\thmnote{ (#3)}}

\theoremstyle{ntheorem}
\newtheorem{theorem}[subsubsection]{Theorem}
\newtheorem{proposition}[subsubsection]{Proposition}
\newtheorem{lemma}[subsubsection]{Lemma}

\theoremstyle{definition}

\newtheorem{example}[subsubsection]{Example}
\newtheorem{remark}[subsubsection]{Remark}

\fancypagestyle{plain}{%
  \fancyhf{}%
  \fancyfoot[LE,RO]{\thepage}%
}

\newcount\hh
\newcount\mm
\mm=\time
\hh=\time
\divide\hh by 60
\divide\mm by 60
\multiply\mm by 60
\mm=-\mm
\advance\mm by \time
\newcommand{\hhmm}{\number\hh:\ifnum\mm<10 0\fi\number\mm}

\UseTips
\SelectTips{eu}{11}

\newdir{ >}{{}*!/-10pt/@{>}}
\newdir{ -}{{}*!/-10pt/@{}}
\newdir{> }{{}*!/+10pt/@{>}}

\makeatletter

\xyletcsnamecsname@{dir4{}}{dir{}}
\xydefcsname@{dir4{-}}{\line@ \quadruple@\xydashh@}
\xydefcsname@{dir4{.}}{\point@ \quadruple@\xydashh@}
\xydefcsname@{dir4{~}}{\squiggle@ \quadruple@\xybsqlh@}
\xydefcsname@{dir4{>}}{\Tttip@}
\xydefcsname@{dir4{<}}{\reverseDirection@\Tttip@}

\xydef@\quadruple@#1{%
  \edef\Drop@@{%
    \dimen@=#1\relax
    \dimen@=.5\dimen@
    \A@=-\sinDirection\dimen@
    \B@=\cosDirection\dimen@
    \setboxz@h{%
      \setbox2=\hbox{\kern3\A@\raise3\B@\copy\z@}%
      \dp2=\z@ \ht2=\z@ \wd2=\z@ \box2
      \setbox2=\hbox{\kern\A@\raise\B@\copy\z@}%
      \dp2=\z@ \ht2=\z@ \wd2=\z@ \box2
      \setbox2=\hbox{\kern-\A@\raise-\B@\copy\z@}%
      \dp2=\z@ \ht2=\z@ \wd2=\z@ \box2
      \setbox2=\hbox{\kern-3\A@\raise-3\B@\noexpand\boxz@}%
      \dp2=\z@ \ht2=\z@ \wd2=\z@ \box2
    }%
    \ht\z@=\z@ \dp\z@=\z@ \wd\z@=\z@ \noexpand\styledboxz@
  }%
}

\xydef@\Tttip@{%
  \kern2pt \vrule height2pt depth2pt width\z@
  \Tttip@@ \kern2pt \egroup
  \U@c=0pt \D@c=0pt \L@c=0pt \R@c=0pt \Edge@c={\circleEdge}%
  \def\Leftness@{.5}\def\Upness@{.5}%
  \def\Drop@@{\styledboxz@}%
  \def\Connect@@{\straight@{\dottedSpread@\jot}}%
}

\xydef@\Tttip@@{%
  \dimen@=.25\dimen@
  \B@=\cosDirection\dimen@
  \setboxz@h\bgroup
  \reverseDirection@\line@
  \wdz@=\z@ \ht\z@=\z@ \dp\z@=\z@
  {\vDirection@(1,-1)\xydashl@ \xyatipfont\char\DirectionChar}%
  {\vDirection@(1,+1)\xydashl@ \xybtipfont\char\DirectionChar}%
}

\gdef\ar@form{%
  \ifx \space@\next \expandafter\DN@\space{\xyFN@\ar@form}%
  \else\ifx ^\next \DN@ ^{\xyFN@\ar@style}\edef\arvariant@@{\string^}%
  \else\ifx _\next \DN@ _{\xyFN@\ar@style}\edef\arvariant@@{\string_}%
  \else\ifx 0\next \DN@ 0{\xyFN@\ar@style}\def\arvariant@@{0}%
  \else\ifx 1\next \DN@ 1{\xyFN@\ar@style}\def\arvariant@@{1}%
  \else\ifx 2\next \DN@ 2{\xyFN@\ar@style}\def\arvariant@@{2}%
  \else\ifx 3\next \DN@ 3{\xyFN@\ar@style}\def\arvariant@@{3}%
  \else\ifx 4\next \DN@ 4{\xyFN@\ar@style}\def\arvariant@@{4}%
  \else\ifx \bgroup\next \let\next@=\ar@style
  \else\ifx [\next \DN@[##1]{\ar@modifiers{[##1]}}
  \else\ifx *\next \DN@ *{\ar@modifiers}%
  \else\addLT@\ifx\next \let\next@=\ar@slide
  \else\ifx /\next \let\next@=\ar@curveslash
  \else\ifx (\next \let\next@=\ar@curveinout 
  \else\addRQ@\ifx\next \addRQ@\DN@{\ar@curve@}%
  \else\addLQ@\ifx\next \addLQ@\DN@{\xyFN@\ar@curve}%
  \else\addDASH@\ifx\next \addDASH@\DN@{\defarstem@-\xyFN@\ar@}%
  \else\addEQ@\ifx\next \addEQ@\DN@{\def\arvariant@@{2}\defarstem@-\xyFN@\ar@}%
  \else\addDOT@\ifx\next \addDOT@\DN@{\defarstem@.\xyFN@\ar@}%
  \else\ifx :\next \DN@:{\def\arvariant@@{2}\defarstem@.\xyFN@\ar@}%
  \else\ifx ~\next \DN@~{\defarstem@~\xyFN@\ar@}%
  \else\ifx !\next \DN@!{\dasharstem@\xyFN@\ar@}%
  \else\ifx ?\next \DN@?{\ar@upsidedown\xyFN@\ar@}%
  \else \let\next@=\ar@error
  \fi\fi\fi\fi\fi\fi\fi\fi\fi\fi\fi\fi
  \fi\fi\fi\fi\fi\fi\fi\fi\fi\fi\fi \next@
}

\makeatother

\newcommand{\dfl}{\Rightarrow}
\newcommand{\dfll}{\Longrightarrow}
\newcommand{\odfl}[1]{\overset{\displaystyle #1}{\dfll}}
\newcommand{\tfl}{\Rrightarrow}
\newcommand{\qfl}{\xymatrix@1@C=10pt{\ar@4[r]&}}

\newcommand{\tck}[1]{#1^{\top}}

\newcommand{\supp}{\operatorname{supp}}
\newcommand{\Styl}{\mathrm{Styl}}

\newcommand{\lex}{\mathrm{lex}}
\newcommand{\slex}{\mathrm{slex}}

\renewcommand{\phi}{\varphi}
\renewcommand{\epsilon}{\varepsilon}
\renewcommand{\leq}{\leqslant}
\renewcommand{\geq}{\geqslant}

\newcommand{\Rr}{\mathcal{R}}

\providecommand{\R}{}
\renewcommand{\R}{\mathbf{R}}

\newcommand{\catego}[1]{\mathbf{\mathsf{#1}}}

\newcommand{\Cat}{\catego{Cat}}
\newcommand{\Act}{\catego{Act}}

\newcommand{\insr}[1]{%
  \;\raisebox{0.1em}{\rotatebox[origin=c]{180}{$\rightsquigarrow$}}_{#1}\;}

\makeatletter
\newcommand{\blfootnote}{\xdef\@thefnmark{}\@footnotetext}
\makeatother

\begin{document}
\thispagestyle{empty}

\begin{center}

\begin{doublespace}
\begin{huge}
{\scshape Coherent presentations for stylic monoids \\ via row rewriting}
\end{huge}

\bigskip
\hrule height 1.5pt
\bigskip

\begin{Large}
{\scshape Nohra Hage}
\end{Large}

\vspace{0.5cm}
\end{doublespace}

\vspace{1cm}

\begin{small}
\begin{minipage}{14cm}
\noindent\textbf{Abstract -- }
We study stylic monoids from the viewpoint of higher-dimensional rewriting. A coherent presentation records not only the defining relations of a monoid, but also the relations among them. Using the $N$-tableau model, we construct a finite coherent presentation for every stylic monoid of finite rank. We start from the known finite convergent row presentation. By adjoining one generating confluence for each critical branching,
Squier's coherence theorem yields a finite coherent extension. We then simplify the resulted coherent presentation by two successive homotopical reductions adapted to the row combinatorics of $N$-tableaux. Finally, coherent Tietze transformations and a further homotopical reduction transfer the construction from the auxiliary row generators to the standard letter generators. This gives an effectively computable finite homotopy basis for the standard letter presentation. The construction is illustrated in detail in rank~$2$.

\medskip

\smallskip
\noindent\textbf{Keywords --}
stylic monoids, $N$-tableaux, coherent presentations, higher-dimensional rewriting, polygraphs, homotopical reduction.

\medskip

\smallskip
\noindent\textbf{M.S.C. 2020 -- Primary:}
18N30, 68Q42.
\textbf{Secondary:}
20M05, 20M10, 05E16.
\end{minipage}
\end{small}

\begin{small}\begin{minipage}{12cm}
\renewcommand{\contentsname}{}
\setcounter{tocdepth}{1}
\tableofcontents
\end{minipage}
\end{small}
\end{center}

\tikzset{every tree node/.style={minimum width=1em,draw,circle},
         blank/.style={draw=none},
         edge from parent/.style=
         {draw,edge from parent path={(\tikzparentnode) -- (\tikzchildnode)}},
         level distance=0.8cm}
\vspace{0.5cm}

\section{Introduction}
The \emph{plactic monoid} is a classical object in algebraic combinatorics. 
Introduced through Knuth's relations on words, it is closely related to the Robinson--Schensted correspondence and to semistandard Young tableaux~\cite{Knuth70,LascouxSchutzenberger81,Fulton97}. It is defined as the quotient of the free monoid on a totally ordered alphabet by the \emph{Knuth relations}, and its elements are naturally represented by tableaux.
The plactic monoid also admits an interpretation through Kashiwara's
theory of crystal bases, which leads to plactic monoids associated with
other Cartan types~\cite{Kashiwara91,Lecouvey02,Lecouvey03}.
It also belongs to a broader family of
\emph{plactic-like monoids}, including the hypoplactic, sylvester,
Baxter, and Chinese monoids~\cite{CainGrayMalheiro15b,CainMalheiro19}. 
The interplay between words, tableaux, crystal
structures, and rewriting has made the plactic monoid and its variants
fundamental examples in algebraic combinatorics and rewriting theory
\cite{CainGrayMalheiro15, CainGrayMalheiro19, Hage25, Hage26, Hage2021Super,Hage15,
HageMalbos22,HageMalbos17,Hage2022RSK}.
For ranks greater than~$3$, Kubat and Okniński showed that the plactic
algebra of type~A does not admit a finite Gröbner--Shirshov basis with respect to the
natural degree-lexicographic order on the usual generators~\cite{KubatOkninski14}. By replacing the usual generators with column
generators, Cain, Gray, and Malheiro constructed a finite convergent
presentation whose rewriting rules encode Schensted's insertion algorithm. They used this presentation to establish biautomatic structures for plactic monoids~\cite{CainGrayMalheiro15}. Since it is adapted to the tableau structure, the column presentation also provides a natural starting point for higher-dimensional constructions. Starting from it, the author and Malbos constructed coherent presentations for plactic monoids of type~A and reduced them by homotopical methods~\cite{HageMalbos17}.

More recently, Abram and Reutenauer introduced the
\emph{stylic monoid} through the natural action of the plactic monoid on
the finite set of columns~\cite{AbramReutenauerStylic}. 
It admits the presentation obtained from the Knuth presentation by adjoining
the \emph{idempotency relations}, and its elements are represented by
$N$-tableaux. The associated right
$N$-insertion algorithm computes these representatives, analogously to
Schensted insertion in the plactic setting. The stylic monoid has since
been studied from several perspectives. Volkov investigated its
identities~\cite{Volkov2022identities}. Aird and Ribeiro studied
its tropical representations and identities~\cite{AirdRibeiro2023tropical}. Abram, Reutenauer, and Saliola studied
quivers of stylic algebras~\cite{AbramReutenauerSaliola2023Quivers}.  Luo, Jin, and Zhang
constructed finite complete rewriting systems and biautomatic
structures for stylic monoids of finite rank~\cite{LuoJinZhangStylic}. Together, these works connect stylic monoids
with semigroup identity theory, representation theory, and rewriting
theory.

The purpose of this paper is to study stylic monoids from the point of view of
higher-dimensional rewriting. More precisely, we construct coherent
presentations for stylic monoids of finite rank. Such presentations encode not
only the defining relations of the monoid, but also the relations among these
relations. They therefore provide a natural framework for studying the
higher-dimensional structure of presentations.
Our approach is based on Squier's theory and on the methods of homotopical
completion and reduction developed in higher-dimensional rewriting~\cite{GaussentGuiraudMalbos15, PolygraphsBook}. By Squier's  coherence theorem, every finite convergent
presentation gives rise to a coherent presentation by adjoining generating confluences associated with its critical branchings. Homotopical reduction
then allows one to reduce the resulting coherent presentation by eliminating
collapsible cells, while preserving a homotopy basis.

These methods have been used to construct coherent presentations for several
families of monoids, including Artin monoids~\cite{GaussentGuiraudMalbos15},
monoids admitting a right-noetherian Garside family
~\cite{CurienDuricGuiraud23}, plactic monoids of several types~\cite{HageMalbos17, Meha2021}, Chinese monoids~\cite{HageMalbos22}, and tableaux
monoids~\cite{HageTableauxMonoid}.
 In the present paper, we follow the same general strategy as in the type~A plactic case~\cite{HageMalbos17}: we
start from a finite convergent presentation adapted to tableaux, construct the
coherent presentation given by Squier's coherence theorem, and reduce it by homotopical
methods. Beyond coherence itself, these constructions provide the first
dimensions of a \emph{polygraphic resolution}, that is, a free
higher-dimensional resolution of the monoid by an acyclic
$(\omega,1)$-polygraph~\cite{GuiraudMalbos12advances,GuiraudMalbos10smf}.
Coherent presentations also give a finite description of \emph{actions of
monoids on categories}~\cite{GaussentGuiraudMalbos15}.

Our concrete starting point is the finite convergent row rewriting system for the stylic monoid constructed by Luo, Jin, and Zhang~\cite{LuoJinZhangStylic}. We regard this rewriting system as a $2$-polygraph. Its generating $1$-cells~$r_u$ are indexed by the nonempty rows~$u$ over the ordered alphabet~$[n]:=\{1,\ldots,n\}$, and its generating $2$-cells are the row rewriting rules determined by right $N$-insertion. The irreducible words in the row generators are precisely the row-generator readings of $N$-tableaux.
Since this $2$-polygraph is finite and convergent, Squier's coherence theorem applies. In Theorem~\ref{T:CoherentPresentation}, we construct the corresponding coherent presentation explicitly by adjoining one generating confluence for each critical branching. These critical branchings are indexed by triples of rows~$(u,v,t)$ for which both adjacent pairs~$(u,v)$ and~$(v,t)$ are reducible. We prove in Proposition~\ref{Prop:boundedBranching} that each branch reaches the common normal form in at most three rewriting steps. In Subsubsection~\ref{SSS:ExplicitGeneratingConfluences}, we give the finite list of possible confluence diagrams. The appendix verifies that all their rewriting steps are well defined and that both branches reach the normal form obtained by right $N$-insertion.

We then reduce the coherent row presentation by homotopical reduction in
Section~\ref{S:ReducedCoherentPresentationStylicMonoid}. The first reduction removes the \emph{splittable generating confluences}.
For the second reduction, suitable Nielsen transformations isolate
selected row relations as targets of collapsible $3$-cells. An explicit
acyclicity procedure then determines a family that can be eliminated
simultaneously. The explicit case-by-case analysis of the
confluence diagrams identifies which row relations can be isolated and which
dependencies may occur.
Finally, we transport the reduced coherent row presentation, through an
intermediate pre-row presentation, to the standard presentation on the
letter generators. We then eliminate all non-letter row generators.
This proves our main result, Theorem~\ref{T:LetterCoherentPresentation}:
for every $n\geq1$, the stylic monoid~$\Styl_n$ admits an effectively
computable finite coherent presentation whose underlying $2$-polygraph is
the standard presentation on~$[n]$ by the oriented Knuth and idempotency
relations.

The paper is organized as follows. Section~\ref{S:RowPresentationStylicMonoid} recalls the background on
$2$-polygraphs, the stylic monoid, $N$-tableaux, and the finite convergent row
presentation. 
Section~\ref{S:CoherentPresentationStylicMonoid} constructs the coherent row
presentation. We describe the critical branchings, give the corresponding
generating confluences, and count the generating cells.
Section~\ref{S:ReducedCoherentPresentationStylicMonoid} carries out the two
homotopical reductions and then transports the reduced coherent row presentation
first to the pre-row presentation and finally to the letter presentation.
Section~\ref{S:Example} gives explicit computations in rank $2$, where the whole
reduction process can be described in detail. 
Appendix~\ref{A:ExplicitGeneratingConfluences} contains the
case-by-case verification of the generating confluence diagrams used in the
construction of the coherent row presentation.

\subsection*{Notation}

Throughout the paper, fix an integer $n\geq 1$, and let $[n]=\{1<2<\cdots<n\}$ be a finite totally ordered alphabet. We denote by $[n]^\ast$ the free monoid generated by $[n]$, with empty word $\varepsilon$ and concatenation as product. For a word $u=x_1x_2\cdots x_q$ in $[n]^\ast$, we denote by $\ell(u)=q$ its \emph{length} and by~$\supp(u)=\{x_1,\dots,x_q\}\subseteq [n]$ its \emph{support}. We set~$\supp(\varepsilon)=\varnothing$. If $u$ is nonempty, we write~$\min(u):=\min(\supp(u))$ and~$\max(u):=\max(\supp(u))$.

The \emph{lexicographic order} on~$[n]^\ast$, denoted by
$<_{\lex}$, is induced by the order on~$[n]$. Thus, for
$u=x_1\cdots x_p$ and $v=y_1\cdots y_q$, one has
$u<_{\lex}v$ if either $u$ is a proper prefix of~$v$, or there exists
$i\leq\min(p,q)$ such that~$
x_1=y_1,\ldots,x_{i-1}=y_{i-1}$, and~$x_i<y_i$.
The \emph{shortlex order}, denoted by~$<_{\slex}$, is defined by~$u<_{\slex}v$ if~$\ell(u)<\ell(v)$ or~$\ell(u)=\ell(v)$ and~$u<_{\lex}v$.

\section{A finite convergent row presentation of the stylic monoid}
\label{S:RowPresentationStylicMonoid}

In this section, we recall the finite convergent row presentation of the
stylic monoid. We first fix the terminology concerning $2$-polygraphs,
rewriting systems, and Tietze transformations. We then recall the realization
of the stylic monoid by $N$-tableaux and right $N$-insertion. Finally, we
introduce the pre-row and row presentations and prove explicitly that they are
Tietze equivalent.

\subsection{Two-dimensional polygraphs}
\label{SS:TwoPolygraphs}
We recall the language of $2$-polygraphs and rewriting systems needed in the
sequel. General references on higher-dimensional rewriting include
~\cite{GuiraudMalbos18,GaussentGuiraudMalbos15,
GuiraudMalbos12advances,PolygraphsBook}.

In this article, monoid presentations are encoded by rewriting systems arising from $2$-polygraphs with a unique $0$-cell, denoted by~$\bullet$.
A \emph{$2$-polygraph}~$\Sigma$ consists of a pair~$
(\Sigma_1,\Sigma_2)$,
where~$\Sigma_1$ is a set of generating $1$-cells and $\Sigma_2$ is a \emph{globular extension} of the free monoid~$\Sigma_1^\ast$, that is, a set of $2$-cells~$\beta\colon u \dfl v$
between $1$-cells $u,v\in \Sigma_1^\ast$.
The $1$-cells $u$ and $v$ are respectively called the \emph{source} and the \emph{target} of~$\beta$, and are denoted by $s_1(\beta)$ and $t_1(\beta)$.
Recall that a $2$-category is a category enriched in categories, whereas a $(2,1)$-category is a category enriched in groupoids.
If two $1$-cells or two $2$-cells $f$ and $g$ are $0$-composable, we denote their $0$-composite by~$fg$.
If they are $1$-composable, we denote their $1$-composite by~$f\star_1 g$.
We denote by $\Sigma_2^\ast$ the $2$-category freely generated by the
$2$-polygraph $\Sigma$, and by $\Sigma_2^\top$ the $(2,1)$-category freely
generated by $\Sigma$, obtained by formally adjoining inverses to the
generating $2$-cells. We call $\Sigma_2^\top$ the \emph{track $2$-category}
generated by $\Sigma$.
The \emph{monoid presented by~$\Sigma$}, denoted by~$\overline{\Sigma}$, is the quotient of the free monoid~$\Sigma_1^\ast$ by the congruence generated by the $2$-cells of~$\Sigma_2$.
A \emph{presentation} of a monoid~$M$ is a $2$-polygraph whose presented monoid is isomorphic to~$M$.

We now recall the elementary transformations that will be used to compare the presentations appearing later.
Let~$\Sigma$ be a $2$-polygraph.
A generating $2$-cell~$\beta$ in~$\Sigma_2$ is said to be
\emph{collapsible} if its target~$t_1(\beta)$ is a $1$-cell of~$\Sigma_1$ and the source~$s_1(\beta)$ does not contain~$t_1(\beta)$.
In that case, the $1$-cell~$t_1(\beta)$ is called \emph{redundant}.
A generating $2$-cell~$\beta\colon u\dfl v$ of~$\Sigma$ is called \emph{redundant} if there
exists a parallel $2$-cell~$\widehat{\beta}\colon u\dfl v$ in the track $2$-category generated by the $2$-polygraph obtained from
$\Sigma$ by deleting~$\beta$. Equivalently, the source and target of~$\beta$
are already connected by a $2$-cell containing neither~$\beta$ nor its
inverse.
An \emph{elementary Tietze transformation} is the $2$-functor induced by one
of the following operations:
\begin{enumerate}[\bf i)]
\item adjunction or elimination of a redundant generating $1$-cell~$x$
together with its collapsible generating $2$-cell~$\beta$;
\item adjunction or elimination of a redundant generating
$2$-cell~$\beta$.
\end{enumerate}
If $\Sigma$ and $\Upsilon$ are $2$-polygraphs, a \emph{Tietze transformation} from~$\Sigma$ to~$\Upsilon$ is a $2$-functor
\[
F\colon \Sigma_2^\top \longrightarrow \Upsilon_2^\top
\]
that decomposes as a finite sequence of elementary Tietze transformations.
Two $2$-polygraphs are said to be \emph{Tietze equivalent} if they are related
by a finite sequence of elementary Tietze transformations. 
Tietze equivalent $2$-polygraphs present isomorphic monoids.
A useful particular case is the following Nielsen transformation.
Let~$\gamma\in\Sigma_2$ be a generating $2$-cell, and let
$\gamma_1$ and~$\gamma_2$ be $2$-cells of~$\Sigma_2^\top$ that
contain neither~$\gamma$ nor its inverse. Suppose that the composite~$
\delta
=
\gamma_1\star_1\gamma\star_1\gamma_2$
is defined. One may adjoin a generating $2$-cell~$\beta\colon s_1(\delta)\dfl t_1(\delta)$
parallel to~$\delta$. The new cell~$\beta$ is redundant, and its
elimination functor sends~$\beta$ to~$\delta$.
Conversely, after adjoining~$\beta$, the generating $2$-cell~$\gamma$
is redundant, since it is parallel to~$\gamma_1^{-1}\star_1\beta\star_1\gamma_2^{-1}$,
which contains neither~$\gamma$ nor its inverse.  Thus one may first
adjoin~$\beta$ and then eliminate~$\gamma$. This two-step Tietze
transformation is called a \emph{Nielsen transformation}.

We finally recall the standard notions of rewriting, branching, and convergence for $2$-polygraphs.
A \emph{rewriting step} of a $2$-polygraph~$\Sigma$ is a $2$-cell of~$\Sigma_2^\ast$ of the form~$w\,\beta\,w'$,
where $\beta$ is a $2$-cell of~$\Sigma_2$ and~$w,w'$ are $1$-cells of~$\Sigma_1^\ast$.
A \emph{rewriting sequence} of~$\Sigma$ is a finite or infinite
composable sequence of rewriting steps.
If $u,v\in\Sigma_1^\ast$, we write~$u\dfl^\ast v$ when there exists a $2$-cell~$f:u\dfl v$ in~$\Sigma_2^\ast$, that is, a finite
composite of rewriting steps from~$u$ to~$v$. We allow the empty composite, so~$u\dfl^\ast u$.
A $1$-cell~$u$ of~$\Sigma_1^\ast$ is a \emph{normal form} if there is no rewriting step with source~$u$.
The $2$-polygraph~$\Sigma$ is said to \emph{terminate} if it admits no infinite rewriting sequence.
A \emph{branching} of~$\Sigma$ is a pair $(f,g)$ of $2$-cells of~$\Sigma_2^\ast$ such that~$s_1(f)=s_1(g)$.
A branching is said to be \emph{local} if both $f$ and $g$ are rewriting steps.
A local branching is \emph{aspherical} if it is of the form~$(f,f)$
for some rewriting step~$f$. It is \emph{Peiffer} if it is of the
form~$(fv,ug)$, where~$f$ and~$g$ are rewriting steps satisfying
$s_1(f)=u$ and~$s_1(g)=v$. All other local branchings are called
\emph{overlapping}.
An overlapping local branching is \emph{critical} if it is minimal with respect to the order generated by the relations
\[
(f,g)\leq (wfw',wgw')
\]
for all local branchings~$(f,g)$ and all $1$-cells~$w,w'$ of~$\Sigma_1^\ast$.
A branching~$(f,g)$ is \emph{confluent} if there exist $2$-cells $f'$ and $g'$ in~$\Sigma_2^\ast$ such that~$s_1(f')=t_1(f)$, $s_1(g')=t_1(g)$, and~$t_1(f')=t_1(g')$.
The $2$-polygraph~$\Sigma$ is \emph{confluent} if all of its branchings are confluent.
Finally,~$\Sigma$ is \emph{convergent} if it is terminating and confluent.
In that case, every $1$-cell~$u$ of~$\Sigma_1^\ast$ admits a unique normal form, denoted by~$\widehat u$.
 
For every finite convergent $2$-polygraph~$\Sigma$ considered below, we fix
a total order on~$\Sigma_2$. At each reducible $1$-cell, we apply a rewriting
step whose occurrence begins at the leftmost possible position. If several
generating $2$-cells are applicable at that position, we choose the least one
with respect to the fixed total order. Since~$\Sigma$ is convergent, this
procedure terminates at the unique normal form~$\widehat u$. 
The resulting normalization path is the $2$-cell
\[
\npath{u}\colon u\dfl\widehat u
\]
in~$\Sigma_2^\ast$. It is called the \emph{leftmost normalization path} of~$u$. If~$u$ is
already in normal form, we set~$\npath{u}:=1_u$.

\subsection{The stylic monoid, $N$-tableaux, and right insertion}
\label{SS:StylicMonoid}

The \emph{stylic monoid of rank~$n$}, denoted by~$\Styl_n$, is presented by
the $2$-polygraph~$\Sigma^{\mathrm{styl}}(n)$, whose generating $1$-cells
are the letters of~$[n]$ and whose generating $2$-cells, oriented in the
decreasing direction with respect to the shortlex order fixed above, are~\cite{AbramReutenauerStylic}:
\[
\begin{array}{rl}
\eta_x\colon& xx\dfl x
\qquad
\text{for }x\in[n],\\
\kappa_{x,y,z}\colon& zxy\dfl xzy
\qquad
\text{for }x\leq y<z,\\
\lambda_{x,y,z}\colon& yzx\dfl yxz
\qquad
\text{for }x<y\leq z.
\end{array}
\]

We now recall from~\cite{AbramReutenauerStylic} the \emph{tableau model} for the
stylic monoid, described in terms of $N$-tableaux and right $N$-insertion.
A \emph{row} is a nonempty word~$u=x_1x_2\cdots x_k$
in~$[n]^\ast$ such that~$x_1<x_2<\cdots<x_k$.
Since a row is strictly increasing, it can be identified with its support.
We denote by
\[
\Rr_n
:=
\left\{
u\in[n]^\ast
\ \middle|\
u\text{ is a nonempty row}
\right\}
\]
the set of nonempty rows on~$[n]$.
The set~$\Rr_n$ is finite, and~$|\Rr_n|=2^n-1$.
If $u$ and $v$ are rows, we write~$u\triangleleft v$
whenever~$\supp(u)\subseteq \supp(v)$ and~$\min(v)<\min(u)$.
An \emph{$N$-tableau} is a collection of left-justified boxes filled with letters of~$[n]$
such that each row (resp. column) is strictly increasing from left to right (resp. from bottom to top), and each row, viewed as a subset of~$[n]$, is contained in the row below it.
Thus, if the rows are denoted from top to bottom by~$u_k,u_{k-1},\dots,u_1$,
then~$T$ is an $N$-tableau if and only if~$u_{i+1}\triangleleft u_i$, for all~$1\le i\le k-1$.
In that case, we denote by~$\R(T)=u_k u_{k-1}\cdots u_1$
the \emph{row reading} of~$T$, obtained by reading the rows from top to bottom.
For instance, the $N$-tableau

\begin{equation}
\label{eq:Ntableau}
T=
\begin{ytableau}
4 & 5 & \none & \none & \none \\
2 & 4 & 5 & \none & \none \\
1 & 2 & 3 & 4 & 5
\end{ytableau}
\end{equation}
has row reading~$\R(T)=45\,245\,12345$.

We now recall the right $N$-insertion procedure.
We begin with the insertion of a letter into a row. Let~$u$ be a row and let
$x\in[n]$. Since the row~$u$ is identified with the subset
$\supp(u)\subseteq[n]$, inserting~$x$ into~$u$ replaces $u$ by the strictly
increasing row corresponding to $\supp(u)\cup\{x\}$. If there exists a smallest
letter of~$u$ which is strictly greater than~$x$, we denote it by~$y$ and call it
the \emph{bumped letter}. If no letter of~$u$ is strictly greater than~$x$, then
no letter is bumped. Equivalently, no letter is bumped if and only if~$x$ is
greater than or equal to every letter of~$u$.
Notice that the bumped letter, when it exists, does not disappear from the row.
Moreover, if $x\in\supp(u)$, then the row itself is unchanged, although a letter
may still be bumped.

We now extend this construction to arbitrary $N$-tableaux. Let~$T$ be an
$N$-tableau whose rows, from top to bottom, are~$u_k,u_{k-1},\ldots,u_1$, and
let $x\in[n]$. The \emph{right $N$-insertion} of~$x$ into~$T$,  is defined recursively as follows. First, insert~$x$ into the
bottom row~$u_1$. If no letter is bumped, the resulting tableau is obtained by
replacing~$u_1$ with its new row. If a letter~$y$ is bumped from~$u_1$, then one
inserts~$y$ into the next row~$u_2$. More generally, whenever a letter is bumped
from a row, it is inserted into the row immediately above. If a letter is bumped
from the top row, it creates a new top row consisting of that single letter.
By~\cite[Proposition~6.1]{AbramReutenauerStylic}, repeating this procedure
row by row produces an $N$-tableau, denoted by~$T\insr{}x$.

For a word~$u=x_1x_2\cdots x_p$ in~$[n]^\ast$, we define~$N(u)$ by inserting
the letters of~$u$ successively from left to right, starting from the empty
tableau~$N(\varepsilon)$. Inserting a letter into the empty tableau creates the
one-row tableau consisting of that letter. Thus
\[
N(u)
=
((\cdots((N(\varepsilon)\insr{}x_1)\insr{}x_2)\cdots)\insr{}x_p).
\]
By~\cite[Theorem~7.1]{AbramReutenauerStylic}, the map
$u\longmapsto N(u)$ induces a bijection between stylic classes and
$N$-tableaux. In particular, a word~$u$ and the row reading~$\R(N(u))$
belong to the same stylic class. Hence, for every $N$-tableau~$T$ and every
letter~$x\in[n]$, one has~$T\insr{}x=N(\R(T)x)$.

\begin{example}
Consider the $N$-tableau~\eqref{eq:Ntableau} and insert the letter~$2$.
In the bottom row $12345$, the letter $2$ is already present, so the row is
unchanged, but $3$ is bumped. Inserting $3$ into the row $245$ gives $2345$ and
bumps $4$. Inserting $4$ into the row $45$ leaves the row unchanged and bumps
$5$, which creates a new top row. Hence
\[
T\insr{}2=
\begin{ytableau}
5 & \none & \none & \none & \none \\
4 & 5 & \none & \none & \none \\
2 & 3 & 4 & 5 & \none \\
1 & 2 & 3 & 4 & 5
\end{ytableau}.
\]
\end{example}

We record several elementary properties of right $N$-insertion that will be
used in the subsequent constructions and proofs.

\begin{lemma}
\label{L:BottomRowUnion}
Let $u$ and $v$ be rows. Then the bottom row of $N(uv)$ is the strictly
increasing row corresponding to $\supp(u)\cup\supp(v)$.
\end{lemma}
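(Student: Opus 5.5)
The plan is to prove a slightly more general invariant, by induction on the number of inserted letters: for every word $w\in[n]^\ast$, the bottom row of $N(w)$ is the strictly increasing row corresponding to $\supp(w)$. Applying this to $w=uv$ and using $\supp(uv)=\supp(u)\cup\supp(v)$ gives the lemma at once. Note that the rows $u$ and $v$ play no special role here: only the letters of $uv$, read from left to right, matter.

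For the base case, $N(\varepsilon)$ is the empty tableau, which has no bottom row; the statement is vacuous, or we take the empty row with empty support. The first genuine case is $N(x_1)$. By definition, inserting a letter into the empty tableau gives the one-row tableau $x_1$, whose bottom row has support $\{x_1\}=\supp(x_1)$.

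For the inductive step, write $w=w'x$ with $w'$ nonempty, so that $N(w)=N(w')\insr{}x$. By the definition of right $N$-insertion, the letter $x$ is always inserted first into the bottom row $u_1$ of $N(w')$, and this replaces $u_1$ by the row corresponding to $\supp(u_1)\cup\{x\}$. Any bumped letter is then inserted into rows strictly above, and a new top row may be created above everything else. Neither of these operations modifies the bottom row again. Hence the bottom row of $N(w)$ has support $\supp(u_1)\cup\{x\}$. By the induction hypothesis this equals $\supp(w')\cup\{x\}=\supp(w)$.

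There is no real obstacle. The only point requiring care is to observe explicitly that the recursive bumping never returns to the bottom row, since bumped letters only move upward. We must also make sure that the tableau-creation convention for the first letter agrees with the invariant, so the induction starts from a one-letter word rather than from the empty tableau.
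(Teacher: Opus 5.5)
Your proof is correct and uses the same mechanism as the paper's: each insertion replaces the current bottom row $u_1$ by the row with support $\supp(u_1)\cup\{x\}$, and bumped letters only move upward, so the bottom row never loses a letter. The only difference is that you induct over all letters of an arbitrary word $w$, rather than only over the letters of $v$ inserted into the row $u$; this also makes explicit the fact, used tacitly in the paper, that $N(u)$ is the one-row tableau $u$.
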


\begin{proof}
By definition, $N(uv)$ is obtained by inserting the letters of $v$ from left to
right into the row $u$. At each step, the current bottom row is replaced by the
strictly increasing row whose support is the union of its previous support with
the inserted letter. If a letter is bumped, it is passed to the row above, but it
does not disappear from the bottom row. Hence, after all letters of $v$ have
been inserted, the bottom row is exactly the row corresponding to
$\supp(u)\cup\supp(v)$.
\end{proof}

\begin{lemma}
\label{L:TwoRowsProductRows}
Let $u$ and $v$ be rows. Then the tableau $N(uv)$ has at most two rows.
\end{lemma}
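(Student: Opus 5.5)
Since $N(uv)$ is obtained by inserting the letters of $v=y_1y_2\cdots y_q$ (with $y_1<\cdots<y_q$) successively into the one-row tableau $N(u)$, the plan is to follow the bumped letters and show that every one of them lands in the second row and that this row never bumps anything. The bottom row is handled by Lemma~\ref{L:BottomRowUnion}, so only the letters passed upward need attention.

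\textbf{Step 1: the bumped letters increase.} Set $B_j=\supp(u)\cup\{y_1,\dots,y_{j-1}\}$, the support of the bottom row before $y_j$ is inserted. When $y_j$ is inserted, the bumped letter (if any) is $b_j=\min\{z\in B_j : z>y_j\}$. We claim that whenever $j<k$ and both $b_j$ and $b_k$ exist, one has $b_j<b_k$. Indeed, $y_j\in B_k$ and $y_j<y_k<b_k$. Moreover $b_k\in B_k$, so either $b_k=y_i$ for some $i<k$ with $y_i>y_k$, which is impossible because $v$ is strictly increasing, or $b_k\in\supp(u)$. In the second case $b_k\in B_j$ and $b_k>y_j$, so $b_j\le b_k$. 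Equality cannot hold: if $b_j=b_k$, then $y_k$ would lie strictly between $y_j$ and $b_j$. Since $y_j<y_k$, this contradicts the minimality of $b_j$ only if $y_k\in B_j$, so this needs care. The cleaner route is to note that $b_j\in\supp(u)$ by the same argument, and that $b_k$ is the least element of $\supp(u)\cup\{y_1,\dots,y_{k-1}\}$ above $y_k$. Since $y_k>y_j$, the element $b_k$ is at least the least element of $\supp(u)$ above $y_k$, which is at least $b_j$. We still need strictness: $b_j=b_k$ can happen, for instance with $u=3$ and $v=12$, where both $1$ and $2$ bump $3$.

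\textbf{Step 2: the sequence is weakly increasing.} So the claim must be weakened to $b_j\le b_k$, which is exactly what the argument above gives. The sequence of letters sent upward is therefore weakly increasing: $b_{j_1}\le b_{j_2}\le\cdots$.

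\textbf{Step 3: the second row never bumps.} The second row is created by $b_{j_1}$. Each later letter $b_{j_m}$ is greater than or equal to every letter already in the second row, because those letters are earlier terms of a weakly increasing sequence. By the definition of row insertion, no letter is bumped from the second row. Hence no third row is ever created, and $N(uv)$ has at most two rows.

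The main obstacle is Step 1: correctly identifying that every bumped letter comes from $\supp(u)$ and proving monotonicity. The example above shows that strictness fails, which is why only weak monotonicity is claimed and is all that Step 3 requires.
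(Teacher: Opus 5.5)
Your proof is correct and follows essentially the same route as the paper: you show that the bumped letters form a weakly increasing sequence, which you get by noting that each bumped letter equals $\min\{z\in\supp(u)\mid z>y_k\}$ (the paper instead uses an inclusion of the sets of candidate bumped letters), and then you conclude that the second row never bumps. In a final write-up, delete the initial strict-monotonicity claim, which your own example $u=3$, $v=12$ refutes, and state only the weak inequality $b_j\le b_k$ that Step~3 actually uses.
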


\begin{proof}
Write $v=y_1y_2\cdots y_q$, with $y_1<\cdots<y_q$. The tableau $N(uv)$ is
obtained by inserting the letters $y_1,\dots,y_q$ successively into the bottom
row, starting from $u$. Let $R_k$ be the bottom row after inserting~$y_k$, with
$R_0=u$.
If no bump occurs, then $N(uv)$ has one row. Suppose that bumps do occur, and
let $S\subseteq\{1,\dots,q\}$ be the set of indices at which a bump occurs. For
$k\in S$, let
\[
c_k:=\min\{a\in\supp(R_{k-1})\mid a>y_k\}
\]
be the bumped letter.

We claim that the sequence $(c_k)_{k\in S}$ is weakly increasing. Let
$k<k'$ be two consecutive elements of~$S$. Since $v$ is a row, one has
$y_k<y_{k'}$. Between the insertions of $y_k$ and $y_{k'}$, the bottom row does
not lose any letter. It only gains some of the letters
$y_k,y_{k+1},\dots,y_{k'-1}$, all of which are strictly smaller than~$y_{k'}$.
Thus no new letter strictly greater than $y_{k'}$ appears in the bottom row
between these two bumping steps. Hence
\[
\{a\in\supp(R_{k'-1})\mid a>y_{k'}\}
\subseteq
\{a\in\supp(R_{k-1})\mid a>y_k\}.
\]
Taking minima gives $c_k\le c_{k'}$. This proves the claim.

The second row is obtained by inserting the bumped letters $c_k$, for
$k\in S$, in increasing order of the indices. Since this sequence is weakly
increasing, each inserted letter is greater than or equal to every letter
already present in the second row. Hence no bump can occur from the second row.
Therefore no third row is created, and $N(uv)$ has at most two rows.
\end{proof}

\begin{lemma}
\label{L:OneRowCriterion}
Let $u$ and $v$ be rows. Then the following are equivalent:
\begin{enumerate}[{\bf i)}]
\item the tableau $N(uv)$ has one row;
\item no bump occurs during the insertion of the letters of $v$ into $u$;
\item $\min(v)\geq \max(u)$.
\end{enumerate}
When these conditions hold, one has $\R(N(uv))=u\cup v$, where $u\cup v$
denotes the strictly increasing row corresponding to $\supp(u)\cup\supp(v)$.
\end{lemma}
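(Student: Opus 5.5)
The plan is to prove the cycle of implications i)$\Rightarrow$ii)$\Rightarrow$iii)$\Rightarrow$i), and then read off the row reading. Throughout, write $v=y_1\cdots y_q$ with $y_1<\cdots<y_q$. Let $R_k$ be the bottom row after inserting $y_k$, with $R_0=u$, exactly as in the proof of Lemma~\ref{L:TwoRowsProductRows}.

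\emph{Step i)$\Leftrightarrow$ii).} This is essentially definitional. By the description of right $N$-insertion, a second row is created precisely when some letter is bumped from the bottom row. Bumped letters are never lost: each one is inserted into the row above. So $N(uv)$ has a single row if and only if no bump occurs during the $q$ insertions.

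\emph{Step ii)$\Rightarrow$iii).} I argue by contraposition. Suppose $\min(v)<\max(u)$, that is, $y_1<\max(u)$. When $y_1$ is inserted into $R_0=u$, the letter $\max(u)$ is strictly greater than $y_1$. Hence a bumped letter exists, namely the least letter of $u$ exceeding $y_1$, and a bump occurs.

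\emph{Step iii)$\Rightarrow$ii).} Assume $y_1\geq\max(u)$. I show by induction on $k$ that $\max(R_{k-1})\leq y_k$, so that inserting $y_k$ bumps nothing. For $k=1$ this is the hypothesis. If no bump has occurred up to step $k-1$, then $\supp(R_{k-1})=\supp(u)\cup\{y_1,\dots,y_{k-1}\}$. Every element of this set is at most $\max(\max(u),y_{k-1})\leq y_{k-1}<y_k$. This completes the induction. By the criterion recalled before the example, no letter is bumped exactly when $x$ is at least every letter of the row, and that is what the induction gives.

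\emph{Row reading.} When the conditions hold, $N(uv)$ consists of its bottom row alone. By Lemma~\ref{L:BottomRowUnion}, that row is the strictly increasing row with support $\supp(u)\cup\supp(v)$. Hence $\R(N(uv))=u\cup v$.

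No step is a genuine obstacle. The only point needing care is the induction in iii)$\Rightarrow$ii), which must use that $v$ is strictly increasing, so that the letters added to the bottom row never exceed the next letter to be inserted. Note that equality $y_1=\max(u)$ is allowed and causes no bump, because $y_1$ is not strictly smaller than $\max(u)$.
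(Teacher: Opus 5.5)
Your proposal is correct and follows essentially the same route as the paper. Both show that the first insertion bumps when $\min(v)<\max(u)$, prove iii)$\Rightarrow$ii) by induction on the support of $R_{k-1}$, treat i)$\Leftrightarrow$ii) as definitional, and read off the row from Lemma~\ref{L:BottomRowUnion}.
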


\begin{proof}
We first prove that {\bf ii)} and {\bf iii)} are equivalent. Write
$v=y_1\cdots y_q$, with $y_1<\cdots<y_q$.
Assume that $\min(v)\ge\max(u)$. Let $R_k$ be the bottom row after inserting
$y_1,\dots,y_k$, with $R_0=u$. We prove by induction on $k$ that no bump occurs
at step $k$. For $k=1$, one has
\[
y_1=\min(v)\ge\max(u)=\max(R_0).
\]
Suppose the claim holds up to step $k-1$. Then $R_{k-1}$ has support
$\supp(u)\cup\{y_1,\dots,y_{k-1}\}$. Since
$y_k>y_{k-1}>\cdots>y_1$ and $y_k\ge\min(v)\ge\max(u)$, one has
$y_k\ge\max(R_{k-1})$. Hence no bump occurs at step $k$. Thus {\bf iii)}
implies {\bf ii)}.
Conversely, if $\min(v)<\max(u)$, then $y_1=\min(v)<\max(u)$. Hence some
letter of $u$ is strictly greater than $y_1$, so a bump occurs at the first
step. Thus {\bf ii)} implies {\bf iii)}.
The equivalence between {\bf i)} and {\bf ii)} follows directly from the
definition of right $N$-insertion.
Finally, when these conditions hold, the tableau $N(uv)$ has one row. By
Lemma~\ref{L:BottomRowUnion}, its support is $\supp(u)\cup\supp(v)$. Hence
$\R(N(uv))=u\cup v$.
\end{proof}

\begin{lemma}
\label{L:TriangleleftMonotonicity}
Let $u$, $v$ and $t$ be rows. Assume that $u\triangleleft v$,
$\supp(v)\subseteq\supp(t)$, and $\min(t)\le\min(v)$. Then~$u\triangleleft t$.
\end{lemma}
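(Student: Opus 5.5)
The statement is a direct unwinding of the definition of~$\triangleleft$, so the plan is to verify its two defining conditions separately. Recall that $u\triangleleft t$ means $\supp(u)\subseteq\supp(t)$ and $\min(t)<\min(u)$. Both conditions should follow from the hypotheses by transitivity: the first from transitivity of inclusion, the second from chaining a strict and a weak inequality.

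For the support condition, the hypothesis $u\triangleleft v$ gives $\supp(u)\subseteq\supp(v)$. Combined with the assumption $\supp(v)\subseteq\supp(t)$, this yields $\supp(u)\subseteq\supp(t)$.

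For the minimum condition, $u\triangleleft v$ also gives $\min(v)<\min(u)$. Together with the assumption $\min(t)\le\min(v)$, we obtain
\[
\min(t)\le\min(v)<\min(u),
\]
hence $\min(t)<\min(u)$. All minima are well defined because $u$, $v$ and $t$ are rows, which are nonempty by definition.

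There is no genuine obstacle. The only point requiring care is that strictness of the inequality survives, and it does because the strict inequality comes from $u\triangleleft v$. Note also that the hypothesis $\min(t)\le\min(v)$ is in fact automatic once $\supp(v)\subseteq\supp(t)$, though the proof does not need this remark.
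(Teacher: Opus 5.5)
Your proof is correct and is exactly the paper's argument: transitivity of inclusion gives $\supp(u)\subseteq\supp(t)$, and the chain $\min(t)\le\min(v)<\min(u)$ gives the required strict inequality. Your side remark is also right: $\min(v)\in\supp(v)\subseteq\supp(t)$ already forces $\min(t)\le\min(v)$, so that hypothesis is redundant.
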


\begin{proof}
Since $u\triangleleft v$, one has $\supp(u)\subseteq\supp(v)$ and
$\min(v)<\min(u)$. The inclusion $\supp(v)\subseteq\supp(t)$ gives
$\supp(u)\subseteq\supp(t)$. Moreover,
$\min(t)\le\min(v)<\min(u)$. Hence $u\triangleleft t$.
\end{proof}

\begin{lemma}
\label{L:TopRowInclusion}
Let $u$ and $v$ be rows, and suppose that $N(uv)$ has two rows. Write
$\R(N(uv))=cd$, where $c$ and $d$ are respectively the top and bottom rows.
Then $\supp(c)\subseteq\supp(u)$.
\end{lemma}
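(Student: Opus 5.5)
We reuse the setup in the proof of Lemma~\ref{L:TwoRowsProductRows}. Write $v=y_1\cdots y_q$ with $y_1<\cdots<y_q$, let $R_k$ be the bottom row after inserting $y_1,\dots,y_k$ (with $R_0=u$), and let $S$ be the set of indices at which a bump occurs. As shown in the proof of Lemma~\ref{L:TwoRowsProductRows}, the top row $c$ is obtained by inserting the bumped letters $c_k$, $k\in S$, in weakly increasing order, and no bump ever leaves the second row. So $\supp(c)=\{c_k\mid k\in S\}$, and it suffices to show that every bumped letter $c_k$ lies in $\supp(u)$.

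Fix $k\in S$. By definition, $c_k=\min\{a\in\supp(R_{k-1})\mid a>y_k\}$. The support of $R_{k-1}$ is $\supp(u)\cup\{y_1,\dots,y_{k-1}\}$, since the bottom row only gains inserted letters (Lemma~\ref{L:BottomRowUnion}). Because $v$ is strictly increasing, each $y_j$ with $j<k$ satisfies $y_j<y_k$, so it is not a candidate in the set above. Hence
\[
\{a\in\supp(R_{k-1})\mid a>y_k\}=\{a\in\supp(u)\mid a>y_k\},
\]
and so $c_k\in\supp(u)$. This gives $\supp(c)\subseteq\supp(u)$.

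The only point needing care is the claim that the support of the top row is exactly the set of bumped letters. This holds because inserting a letter into a row only adds that letter, and, by Lemma~\ref{L:TwoRowsProductRows}, no letter is bumped out of the second row, so nothing is lost or created there. I expect no real obstacle beyond this bookkeeping.
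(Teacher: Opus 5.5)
Your proof is correct and follows essentially the same route as the paper. Both arguments observe that the letter bumped when $y_k$ is inserted is strictly greater than $y_k$, so it cannot be any earlier letter $y_1,\dots,y_{k-1}$ and therefore lies in $\supp(u)$; both then note that the top row is built from these bumped letters. Your explicit identity $\{a\in\supp(R_{k-1})\mid a>y_k\}=\{a\in\supp(u)\mid a>y_k\}$ is a slightly cleaner phrasing of the paper's ``no letter coming from $v$ is ever bumped'', since it also covers letters lying in both $\supp(u)$ and $\supp(v)$.
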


\begin{proof}
By Lemma~\ref{L:BottomRowUnion}, the bottom row $d$ of $N(uv)$ satisfies~$\supp(d)=\supp(u)\cup\supp(v)$.
The top row $c$ is obtained by inserting, into the row above, the letters bumped
from the bottom row during the successive insertions of the letters of $v$ into
the initial row $u$. It therefore suffices  to prove that no letter coming from
$v$ can be bumped from the bottom row.

Write $v=y_1\cdots y_q$, with $y_1<\cdots<y_q$. During the computation of
$N(uv)$, the letters of $v$ are inserted into the bottom row from left to right,
hence in increasing order.
Suppose that a letter~$y_i$ has already been inserted into the bottom row. Any
later inserted letter is some $y_j$ with $j>i$, hence $y_j>y_i$. But a bump
selects the smallest letter of the current row that is strictly greater than the
inserted letter. Therefore inserting~$y_j$ cannot bump the smaller letter~$y_i$.
Thus no letter inserted from $v$ is ever bumped from the bottom row. Hence every
letter bumped from the bottom row belongs to the initial support~$\supp(u)$.
Since the top row $c$ is formed from these bumped letters, one obtains~$\supp(c)\subseteq\supp(u)$.
\end{proof}

\begin{remark}
\label{R:FiniteCompletionLetterPresentation}
For each fixed rank $n$, the letter presentation
$\Sigma^{\mathrm{styl}}(n)$ admits a finite convergent completion.
Let $\mathcal N_n$ be the finite set of shortlex-minimal representatives of
the stylic classes, including the empty word~$\varepsilon$, which represents
the identity.  Write~$\nu(u)$ for the representative of the class of a
word~$u$. For every $w\in\mathcal N_n$ and every $x\in[n]$ such that
$wx\neq\nu(wx)$, adjoin a transition $2$-cell
\[
\theta_{w,x}\colon wx\dfl\nu(wx).
\]
We denote by~$\widehat{\Sigma}^{\mathrm{styl}}(n)$ the resulting
$2$-polygraph.

Since~$\Styl_n$ is finite, the set~$\mathcal N_n$ is finite, and hence
$\widehat{\Sigma}^{\mathrm{styl}}(n)$ is finite. Every generating rule
strictly decreases the shortlex order. Indeed, the idempotent rules decrease
the length, the two families of Knuth-type rules decrease the lexicographic
order at fixed length, and every transition rule decreases shortlex by the
minimality of~$\nu(wx)$. Since shortlex is a well-founded order compatible
with contexts, the $2$-polygraph
$\widehat{\Sigma}^{\mathrm{styl}}(n)$ terminates.

Moreover, every word reduces to its chosen representative. Indeed, if
$u=x_1\cdots x_k$, set $w_0=\varepsilon$ and
$w_i=\nu(x_1\cdots x_i)$,  for~$1\le i\le k$.
Then $w_{i-1}x_i$ and $x_1\cdots x_i$ represent the same element of
$\Styl_n$, so~$\nu(w_{i-1}x_i)=w_i$.
Thus either $w_{i-1}x_i=w_i$, or the transition rule~$\theta_{w_{i-1},x_i}\colon w_{i-1}x_i\dfl w_i$
applies. Applying these reductions successively gives
\[
x_1\cdots x_k
\dfl^\ast
w_1x_2\cdots x_k
\dfl^\ast
w_2x_3\cdots x_k
\dfl^\ast
\cdots
\dfl^\ast
w_k
=
\nu(u).
\]
Every chosen representative is irreducible. Indeed, a rewriting step starting from
such a representative would produce a strictly smaller word in the same
stylic class, contradicting shortlex minimality. Conversely, if~$q$ is
irreducible, the preceding prefix-reduction construction gives
$q\dfl^\ast\nu(q)$. This path must be empty, and hence $q=\nu(q)$.
Therefore every stylic class contains exactly one irreducible word. Together
with termination, this proves confluence. Hence
$\widehat{\Sigma}^{\mathrm{styl}}(n)$ is a finite convergent completion of
the letter presentation.

The finiteness of~$\Styl_n$ in fixed rank yields the finite shortlex completion above. However, its rewriting rules do not directly reflect right $N$-insertion, which makes its relations among relations difficult to describe explicitly. We therefore use the row presentation, whose rewriting rules are adapted
to the combinatorics of $N$-tableaux and are better suited to the construction of a coherent presentation.
\end{remark}

\subsection{The pre-row presentation}
\label{SS:PreRowPresentation}
We now recall from~\cite{LuoJinZhangStylic} the pre-row presentation of
the stylic monoid. Set
\[
\overline{\Rr}_n
:=
\left\{
u\in\Rr_n
\ \middle|\
\ell(u)\geq2
\right\}.
\]
For each row $u\in \Rr_n$, we introduce a $1$-cell $r_u$.
In particular, for every letter $x\in [n]$, the symbol~$r_x$ denotes the row generator
corresponding to the row of length~$1$. If~$T$ is an $N$-tableau with~$\R(T)= u_k u_{k-1} \cdots u_1$, we
denote by~$\Rr(T):=r_{u_k}r_{u_{k-1}}\cdots r_{u_1}$ the \emph{row-generator reading} of~$T$.

For every row~$u=x_1x_2\cdots x_k\in \overline{\Rr}_n$,
we adjoin a $2$-cell
\[
\gamma_u\colon r_{x_1}r_{x_2}\cdots r_{x_k}\dfl r_u.
\]
These cells simply say that a strictly increasing word may be compressed into the single
generator corresponding to that row.

We denote by $\Sigma^{\mathrm{prerow}}(n)$ the $2$-polygraph whose set of $1$-cells is
\[
\Sigma^{\mathrm{prerow}}_1(n)=\{r_u\mid u\in \Rr_n\}
\]
and whose set of $2$-cells~$\Sigma^{\mathrm{prerow}}_2(n)$ consists of the \emph{lifted stylic relations}:
\[
\begin{array}{rl}
\eta_x^r\colon& r_xr_x\dfl r_x
\qquad
\text{for }x\in [n],\\
\kappa_{x,y,z}^r\colon& r_zr_xr_y\dfl r_xr_zr_y
\qquad
\text{for }x\le y<z,\\
\lambda_{x,y,z}^r\colon& r_yr_zr_x\dfl r_yr_xr_z
\qquad
\text{for }x<y\le z,
\end{array}
\]
together with the \emph{compression $2$-cells}~$\gamma_u$ for each $u$ in~$\overline{\Rr}_n$.
We call this the \emph{pre-row presentation} of the stylic monoid.

\begin{proposition}
\label{P:PrerowPresentsStylic} 
The $2$-polygraph $\Sigma^{\mathrm{prerow}}(n)$ presents the stylic monoid $\Styl_n$. 
\end{proposition}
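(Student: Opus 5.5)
The plan is to show that $\Sigma^{\mathrm{prerow}}(n)$ is Tietze equivalent to $\Sigma^{\mathrm{styl}}(n)$. Since Tietze equivalent $2$-polygraphs present isomorphic monoids, this gives the statement. Only elementary Tietze transformations of type~\textbf{i)} are needed: we eliminate, one at a time, each redundant generating $1$-cell $r_u$ with $\ell(u)\geq2$ together with its collapsible $2$-cell~$\gamma_u$.

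First, for each $u=x_1\cdots x_k\in\overline{\Rr}_n$, check that $\gamma_u\colon r_{x_1}\cdots r_{x_k}\dfl r_u$ is collapsible. Its target is the single generating $1$-cell $r_u$. Its source is a word in the length-one generators $r_{x_i}$ only, so it does not contain $r_u$. Moreover, $r_u$ occurs in no other $2$-cell. The lifted stylic relations involve only the generators $r_x$ with $x\in[n]$, and each other compression cell $\gamma_{u'}$ has target $r_{u'}\neq r_u$ and source in the letter generators. Hence the generators $r_u$ may be eliminated in any order, for instance by increasing shortlex order on~$u$.

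Each elimination substitutes $r_{x_1}\cdots r_{x_k}$ for $r_u$ in the remaining $2$-cells. Since no remaining cell contains $r_u$, every other cell is left unchanged. After all $2^n-1-n$ eliminations, the $2$-polygraph has generating $1$-cells $\{r_x\mid x\in[n]\}$ and $2$-cells $\eta^r_x,\kappa^r_{x,y,z},\lambda^r_{x,y,z}$. Under the renaming $r_x\mapsto x$, this is exactly $\Sigma^{\mathrm{styl}}(n)$.

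There is no real obstacle. The only point needing care is to verify, at every stage of the sequence, that the collapsibility condition holds and that no other cell refers to the eliminated generator. Both facts follow from the observation that every source of a $2$-cell lies in the submonoid generated by the $r_x$, $x\in[n]$. Alternatively, one could define mutually inverse morphisms $\Styl_n\to\overline{\Sigma^{\mathrm{prerow}}(n)}$ and back, sending $x\mapsto r_x$ and $r_u\mapsto x_1\cdots x_k$, and check that they respect the relations. The Tietze argument is cleaner, and it also supplies the explicit transformation used later in the paper.
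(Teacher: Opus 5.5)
Your proposal is correct and is essentially the paper's proof. The paper reads the same Tietze sequence in the opposite direction: it starts from the lifted letter presentation, identified with $\Sigma^{\mathrm{styl}}(n)$, and adjoins each non-letter generator $r_u$ together with its collapsible compression cell $\gamma_u$, which is exactly your chain of type \textbf{i)} eliminations reversed.
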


\begin{proof}
The lifted stylic presentation on the generators $r_x$, with $x\in[n]$, is
identified with the letter presentation $\Sigma^{\mathrm{styl}}(n)$, and hence
presents $\Styl_n$. The pre-row presentation is obtained from it by adjoining
only redundant row generators, each together with its collapsible compression
cell. These elementary Tietze transformations do not change the presented
monoid. Therefore $\Sigma^{\mathrm{prerow}}(n)$ presents $\Styl_n$.
\end{proof}

\subsubsection{Canonical comparison paths in the pre-row presentation}
\label{SSS:CanonicalPreRowPaths}

Fix the following order on the signed lifted stylic relations:
\[
\eta^r
\prec
\kappa^r
\prec
\lambda^r
\prec
(\eta^r)^{-1}
\prec
(\kappa^r)^{-1}
\prec
(\lambda^r)^{-1}.
\]
Within each family, order the cells lexicographically by their parameters.
Elementary rewriting steps are ordered first by their occurrence position,
from left to right, and then by the preceding order.

Let $U$ and $V$ be words in the generators
$\{r_x\mid x\in[n]\}$ representing the same element of~$\Styl_n$.
Consider the \emph{signed rewriting graph} whose vertices are words in the
letter generators and whose edges are applications of lifted stylic
relations or their inverses. Order the outgoing elementary steps at each
vertex by their occurrence position and then by the fixed order on the
signed relations.
Starting from~$U$, enumerate finite rewriting paths breadth first:
paths are considered first by length and, among paths of the same
length, lexicographically by their sequences of elementary steps.
Since the signed rewriting graph is locally finite, only finitely many
paths of any fixed length start at~$U$. Moreover, since~$U$ and~$V$
represent the same element of~$\Styl_n$, there exists a finite signed
rewriting path from~$U$ to~$V$. Consequently, the breadth-first search
eventually reaches~$V$.
Denote by~$\Pi_{U,V}\colon U\dfl V$ the first path reaching~$V$. By construction, $\Pi_{U,V}$ has minimum
length among all paths from~$U$ to~$V$, and it is the
lexicographically least path among those of minimum length. In
particular,~$\Pi_{U,V}$ is well defined and effectively computable.

For a row-generator word~$w=r_{u_1}\cdots r_{u_k}$, with~$u_i=x_{i,1}\cdots x_{i,p_i}$, define
\[
\operatorname{flat}(w)
=
r_{x_{1,1}}\cdots r_{x_{1,p_1}}
\cdots
r_{x_{k,1}}\cdots r_{x_{k,p_k}}.
\]
For each $u\in\Rr_n$, let
\[
e_u\colon r_u\dfl\operatorname{flat}(r_u),
\qquad
e_u=
\begin{cases}
\gamma_u^{-1}, & \ell(u)\geq 2,\\
1_{r_u},       & \ell(u)=1.
\end{cases}
\]
Set~$E_w:=e_{u_1}\cdots e_{u_k}
\colon
w\dfl\operatorname{flat}(w)$,
where the product is the $0$-composite of the displayed $2$-cells.

For row-generator words $s$ and $t$ representing the same stylic
element, define
\begin{equation}
\label{Eq:CanonicalPreRowComparison}
C_{s,t}
:=
E_s
\star_1
\Pi_{\operatorname{flat}(s),\operatorname{flat}(t)}
\star_1
E_t^{-1}
\colon
s\dfl t.
\end{equation}
Thus $C_{s,t}$ expands $s$, follows the canonical lifted stylic path,
and then recompresses to $t$.
In particular, for $u,v\in\Rr_n$ with $u\not\triangleleft v$, set
\begin{equation}
\label{Eq:CanonicalPreRowRowCell}
P_{u,v}
:=
C_{r_ur_v,\Rr(N(uv))}.
\end{equation}
This path is well defined because $uv$ and $\R(N(uv))$ represent the
same element of $\Styl_n$. It provides the specified replacement for
the row $2$-cell $\alpha_{u,v}$ introduced below.

\subsection{The row presentation}
\label{SS:RowPresentation}

We now recall from~\cite{LuoJinZhangStylic} the row presentation of the stylic
monoid. Its set of generating $1$-cells is
\[
\Sigma^{\mathrm{row}}_1(n)
=
\{r_u\mid u\in\Rr_n\}.
\]
Let $u,v\in\Rr_n$. By Lemma~\ref{L:TwoRowsProductRows}, the tableau $N(uv)$ has
at most two rows. For every pair of rows $u,v$ such that $u\not\triangleleft v$,
we introduce a generating $2$-cell~$\alpha_{u,v}\colon r_u r_v \dfl \Rr(N(uv))$.
Thus the set of generating $2$-cells is
\[
\Sigma^{\mathrm{row}}_2(n)
=
\{\alpha_{u,v}\mid u,v\in\Rr_n,\ u\not\triangleleft v\}.
\]
The target of $\alpha_{u,v}$ is either $r_{w'}$, if $N(uv)$ has one row $w'$,
or $r_w r_{w'}$, if $N(uv)$ has top row $w$ and bottom row $w'$.
The resulting $2$-polygraph~$\Sigma^{\mathrm{row}}(n)=
\bigl(\Sigma^{\mathrm{row}}_1(n),\Sigma^{\mathrm{row}}_2(n)\bigr)$
is called the \emph{row presentation} of the stylic monoid.

The condition $u\not\triangleleft v$ means precisely that the word $r_ur_v$ is
not already in row normal form. Indeed, if $u\triangleleft v$, then $r_ur_v$ is
the row-generator reading of a two-row $N$-tableau, and no rewriting step is
needed. Hence the irreducible words are exactly the words~$
r_{u_k}\cdots r_{u_1}$ such that~$u_{i+1}\triangleleft u_i$, for all~$1\le i\le k-1$.
Equivalently, the irreducible words are precisely the row-generator readings of
$N$-tableaux.

\begin{proposition}[{\cite[Theorem~3.5]{LuoJinZhangStylic}}]
\label{P:RowPresentationConvergent}
The $2$-polygraph $\Sigma^{\mathrm{row}}(n)$ is finite and convergent. Its
irreducible $1$-cells are exactly the row-generator readings of $N$-tableaux
on~$[n]$. In particular, $\Sigma^{\mathrm{row}}(n)$ presents the stylic
monoid~$\Styl_n$.
\end{proposition}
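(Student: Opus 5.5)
The plan has four parts: finiteness, irreducible words, termination, and confluence. The last two give convergence, and the presentation statement follows by Tietze equivalence with the pre-row presentation.

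\textbf{Finiteness.} The set $\Rr_n$ has $2^n-1$ elements, so there are finitely many $1$-cells. There is at most one $2$-cell $\alpha_{u,v}$ for each ordered pair of rows, so $\Sigma^{\mathrm{row}}_2(n)$ is finite as well.

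\textbf{Irreducible words.} Every source of a $2$-cell has length two, so a word $r_{u_k}\cdots r_{u_1}$ is irreducible exactly when each adjacent pair $r_{u_{i+1}}r_{u_i}$ satisfies $u_{i+1}\triangleleft u_i$. By definition, these words are the row-generator readings of $N$-tableaux.

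\textbf{Termination.} I would use a well-founded order compatible with products. For a rule $\alpha_{u,v}$ with a one-row target, the number of generators strictly decreases. For a rule with a two-row target $r_cr_d$, Lemma~\ref{L:BottomRowUnion} gives $\supp(d)=\supp(u)\cup\supp(v)\supseteq\supp(u)$. Also $d\neq u$: otherwise $\supp(v)\subseteq\supp(u)$, and then $\min(v)\ge\min(u)$ would make $N(uv)$ equal to the one-row tableau $u$. So I would attach to each generator $r_u$ the weight $\omega(u)=2^n-|\supp(u)|$, with a large base $B$. To a word $r_{w_1}\cdots r_{w_k}$ I would assign the vector of weights read from the right, compared first by length and then lexicographically.

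The main obstacle is that $c$ can be larger than $v$, so a naive sum of weights need not drop. I would try to make the measure decrease using the fact that the right factor $d$ strictly contains $u$, combined with lengths. If this fails, I would instead use the column-reading order: $\R(N(uv))$ is a word in the same stylic class that is smaller for the Knuth orientation. The fallback therefore flattens words, via $\operatorname{flat}$, into letter words. The letter words are then compared by shortlex on $\operatorname{flat}$ after idempotent and Knuth moves, using the decreasing orientation of $\Sigma^{\mathrm{styl}}(n)$ and counting generators as a second key.

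\textbf{Confluence.} I would avoid analysing critical pairs. Each rule relates words in the same stylic class, since $uv$ and $\R(N(uv))$ are stylically equivalent by \cite[Theorem~7.1]{AbramReutenauerStylic}. Termination shows that every word reduces to an irreducible word, which is $\Rr(T)$ for some $N$-tableau $T$.

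If two irreducible words $\Rr(T)$ and $\Rr(T')$ are reachable from the same word, then $T$ and $T'$ lie in the same stylic class. The bijection between stylic classes and $N$-tableaux then gives $T=T'$. So normal forms are unique, and together with termination this yields confluence.

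\textbf{Presentation.} Finally, to see that $\Sigma^{\mathrm{row}}(n)$ presents $\Styl_n$, I would compare it with the pre-row presentation, which presents $\Styl_n$ by Proposition~\ref{P:PrerowPresentsStylic}. Each $\alpha_{u,v}$ is parallel to the path $P_{u,v}$, so all the $\alpha$'s can be adjoined as redundant cells.

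Conversely, the lifted relations and the compression cells must be shown derivable from the $\alpha$'s. Each of them relates two words with equal $N$-tableaux. By confluence, both words normalize to the same normal form in $\Sigma^{\mathrm{row}}(n)$, and the two normalizations give the required parallel $2$-cell.
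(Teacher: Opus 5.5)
\textbf{Gap: termination is not proved.} Finiteness and the description of irreducible words are fine. The passage from uniqueness of normal forms (via \cite[Theorem~7.1]{AbramReutenauerStylic}) to confluence is also fine, and so is the Tietze comparison, but both rest on termination, which your proposal does not establish. For comparison, the paper does not prove this statement at all: it imports it from \cite[Theorem~3.5]{LuoJinZhangStylic}, and separately proves only the Tietze equivalence, by explicit diagrams, in Proposition~\ref{P:PrerowRowTietze}.

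Your termination paragraph fails at each of its steps.
\begin{itemize}
\item The claim $d\neq u$ is false. The rule $\alpha_{12,1}\colon r_{12}r_1\dfl r_2r_{12}$ has $d=u=12$. You used $\min(v)\geq\min(u)$, whereas Lemma~\ref{L:OneRowCriterion} requires $\min(v)\geq\max(u)$.
\item The weight vector read from the right increases under $\alpha_{12,12}\colon r_{12}r_{12}\dfl r_2r_{12}$. The last entries agree, and then $\omega(2)>\omega(12)$.
\item The fallback fails too. Under $\alpha_{2,1}\colon r_2r_1\dfl r_2r_{12}$, the flattened word $r_2r_1$ becomes the longer word $r_2r_1r_2$.
\item More fundamentally, for $n\geq3$ no lexicographic comparison induced by a single order on row generators works from either end. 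Read from the left, $\alpha_{23,1}\colon r_{23}r_1\dfl r_2r_{123}$ forces $r_2<r_{23}$, while $\alpha_{3,2}\colon r_3r_2\dfl r_3r_{23}$ forces $r_{23}<r_2$. Read from the right, $\alpha_{13,23}\colon r_{13}r_{23}\dfl r_3r_{123}$ forces $r_{123}<r_{23}$, while $\alpha_{123,123}\colon r_{123}r_{123}\dfl r_{23}r_{123}$ forces $r_{23}<r_{123}$.
\end{itemize}

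\textbf{One way to close the gap.} No rule increases the number of generators, and there are finitely many words of bounded length. So an infinite rewriting sequence would contain a cycle of two-row steps $r_ur_v\dfl r_cr_d$ on words of fixed length. Say a step is \emph{at $j$} if it rewrites positions $(j,j+1)$, and let $p\leq q$ be the least and greatest such $j$ occurring in the cycle.
\begin{itemize}
\item The entry at position $p$ can only be replaced by subsets, by Lemma~\ref{L:TopRowInclusion}. Hence every step at $p$ has $c=u$, so in particular $\min(u)$ is bumped.
\item The entry at position $q+1$ can only be replaced by supersets, by Lemma~\ref{L:BottomRowUnion}. Hence every step at $q$ has $d=v$. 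Then $\supp(u)\subseteq\supp(v)$ and $u\not\triangleleft v$ force $\min(v)=\min(u)$. So $\min(u)$ is never bumped, and $\min(c)>\min(u)$.
\item Show by induction on $j=p,\dots,q$ that every step at $j$ bumps the minimum of its left row. The case $j=p$ is the first bullet. If the claim holds at $j-1$, those steps have $\min(v)<\min(u)$, hence $\min(d)=\min(v)$, so they leave the minimum of the entry at position $j$ unchanged. Steps at $j$ can only raise that minimum. A quantity that weakly increases along a cycle is constant, and constancy is exactly the claim at $j$.
\end{itemize}
At $j=q$ this contradicts $\min(c)>\min(u)$. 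With termination established this way, or cited from \cite[Theorem~3.5]{LuoJinZhangStylic}, the rest of your argument goes through.
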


We now prove explicitly that the pre-row presentation and the row presentation
are Tietze equivalent. This equivalence is implicit in~\cite{LuoJinZhangStylic}
at the level of rewriting systems. We record it here in the language of
$2$-polygraphs, since it will be used later in the construction.

\begin{proposition}
\label{P:PrerowRowTietze}
The $2$-polygraphs $\Sigma^{\mathrm{prerow}}(n)$ and
$\Sigma^{\mathrm{row}}(n)$ are Tietze equivalent.
\end{proposition}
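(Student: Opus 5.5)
The plan is to pass through an intermediate $2$-polygraph $\Sigma^{\mathrm{mix}}(n)$. It has the common set of generating $1$-cells $\{r_u\mid u\in\Rr_n\}$ and the union of the two sets of $2$-cells:
\[
\Sigma^{\mathrm{mix}}_2(n)=\Sigma^{\mathrm{prerow}}_2(n)\cup\Sigma^{\mathrm{row}}_2(n).
\]
I will show that $\Sigma^{\mathrm{mix}}(n)$ is obtained from $\Sigma^{\mathrm{prerow}}(n)$ by adjoining redundant $2$-cells, and also from $\Sigma^{\mathrm{row}}(n)$ by adjoining redundant $2$-cells. Since each step is an elementary Tietze transformation of type \textbf{ii)}, composing the first sequence with the inverse of the second proves the proposition. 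No $1$-cells are added or removed, so only $2$-cell redundancy must be checked.

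\emph{From pre-row to mixed.} For each pair $u\not\triangleleft v$, the path $P_{u,v}$ of \eqref{Eq:CanonicalPreRowRowCell} is a $2$-cell $r_ur_v\dfl\Rr(N(uv))$ in the track $2$-category of $\Sigma^{\mathrm{prerow}}(n)$. It is well defined because $uv$ and $\R(N(uv))$ are stylic equivalent by \cite[Theorem~7.1]{AbramReutenauerStylic}. Hence each $\alpha_{u,v}$ is parallel to a $2$-cell avoiding all row cells, and is therefore redundant. I will adjoin the cells $\alpha_{u,v}$ one at a time; each remains redundant because $P_{u,v}$ uses only pre-row cells.

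\emph{From row to mixed.} In the other direction, I will exhibit each pre-row cell as a composite of row cells and their inverses. This uses the convergence of $\Sigma^{\mathrm{row}}(n)$ from Proposition~\ref{P:RowPresentationConvergent}. For a $2$-cell $\beta\colon s\dfl t$ of $\Sigma^{\mathrm{prerow}}_2(n)$, it suffices to show that $s$ and $t$ have the same row normal form, i.e.\ are equal in the monoid presented by $\Sigma^{\mathrm{row}}(n)$. Then $\npath{s}\star_1\npath{t}^{-1}$ is a parallel $2$-cell built only from row cells. The cells are handled as follows.
\begin{itemize}
\item For $\gamma_u$ with $u=x_1\cdots x_k$, iterate Lemma~\ref{L:OneRowCriterion}: since $x_1<\cdots<x_k$, each pair satisfies $x_i\not\triangleleft$ the row built so far (equal minima), and the steps
\[
\alpha_{x_1,x_2},\ \alpha_{x_1x_2,x_3},\ \ldots
\]
compress $r_{x_1}\cdots r_{x_k}$ to $r_u$.
\item $\eta^r_x$ is exactly $\alpha_{x,x}$.
\item For $\kappa^r_{x,y,z}$ and $\lambda^r_{x,y,z}$, compute the $N$-tableaux of $zxy$, $xzy$, $yzx$ and $yxz$ by right insertion, using Lemmas~\ref{L:BottomRowUnion} and~\ref{L:TopRowInclusion}. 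Both sides give the same tableau: $N(zxy)=N(xzy)$ has rows $z$ over $xyz$ (after union), and $N(yzx)=N(yxz)$ similarly.
\end{itemize}
By uniqueness of normal forms, both sides normalize to the same $\Rr(N(\cdot))$.

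The \textbf{main obstacle} is making rigorous the claim that two row words with equal $N$-tableau have equal row normal form. The row normal form of a row word $w$ is $\Rr(N(\operatorname{flat}(w)))$, because every $\alpha_{u,v}$ preserves the $N$-tableau of the flattened word. This holds since $\R(N(uv))$ and $uv$ lie in the same stylic class, and by \cite[Theorem~7.1]{AbramReutenauerStylic} $N$ is constant on stylic classes. Moreover, an irreducible word is $\Rr(T)$ with $N(\R(T))=T$. Since the pre-row relations preserve stylic classes, both sides of each pre-row relation share an $N$-tableau, so this argument also removes the need for the explicit case computations. Finally, I will check that after adjoining all row cells, the pre-row cells can be eliminated one at a time. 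This works because each replacement path $\npath{s}\star_1\npath{t}^{-1}$ involves only row cells and never a pre-row cell.
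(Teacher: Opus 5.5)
Your proposal is correct. The first half is exactly the paper's argument: you adjoin each $\alpha_{u,v}$ as a redundant cell with replacement $P_{u,v}$, then eliminate the pre-row cells one at a time, which works because their replacements use only row cells.

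Where you differ is in showing that the pre-row cells are redundant once the row cells are present.
\begin{itemize}
\item The paper exhibits explicit short replacements. $\eta_x^r$ is $\alpha_{x,x}$. Each $\kappa^r_{x,y,z}$ and $\lambda^r_{x,y,z}$ is filled by an explicit triangle or square of three or four row cells, with the strict and degenerate cases treated separately. Finally, $\gamma_u$ is parallel to $(\gamma_{u^-}1_{r_{x_p}})\star_1\alpha_{u^-,x_p}$ by induction on $\ell(u)$.
\item You instead use the uniform zigzag $\npath{s}\star_1\npath{t}^{-1}$. This rests on the convergence of $\Sigma^{\mathrm{row}}(n)$. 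It also rests on identifying the row normal form of $w$ with $\Rr(N(\operatorname{flat}(w)))$ through the Abram--Reutenauer bijection, including $N(\R(T))=T$ for every $N$-tableau $T$.
\end{itemize}

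Your route removes the case analysis entirely. It is also the same device the paper itself uses later to define the replacement paths $Q_\delta$. The paper's route does not need convergence, only the existence and targets of a few specific row cells computed by hand. It also gives much shorter, explicit replacement paths.

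One small slip in your bullet for $\gamma_u$. The cell applied at each step is $\alpha_{x_1\cdots x_{i-1},x_i}$. It exists because $x_1\cdots x_{i-1}\not\triangleleft x_i$: its support is not contained in $\{x_i\}$, and $\min(x_i)>x_1$. This has nothing to do with ``equal minima''. The slip is harmless, since your uniform argument already covers $\gamma_u$: $\operatorname{flat}$ of its source and of its target coincide.
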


\begin{proof}
By Proposition~\ref{P:PrerowPresentsStylic}, the $2$-polygraph
$\Sigma^{\mathrm{prerow}}(n)$ presents the stylic monoid $\Styl_n$.

We first adjoin the row $2$-cells. Let $u,v\in\Rr_n$ with
$u\not\triangleleft v$. 
The row $2$-cell~$\alpha_{u,v}$ preserves the stylic class, since the
words~$uv$ and~$\R(N(uv))$ represent the same element of~$\Styl_n$.  Moreover, the canonical path~$P_{u,v}$
defined in~\eqref{Eq:CanonicalPreRowRowCell} has the same source and target
as~$\alpha_{u,v}$ and uses only cells of the pre-row presentation.
Therefore each~$\alpha_{u,v}$ may be adjoined as a redundant $2$-cell,
using~$P_{u,v}$ as its specified replacement.

We now show that, after adjoining all row $2$-cells, the original pre-row
$2$-cells become redundant. First, the idempotent relation~$\eta_x^r$ is parallel to the row
$2$-cell~$\alpha_{x,x}$, and is therefore redundant.
For the lifted Knuth-type relations, the following diagrams show that they are
parallel, in the track $2$-category generated by the row cells, to composites
of row $2$-cells and their inverses. In the strict case $x<y<z$, one has
\[
\xymatrix @R=0.3em @C=2em @!C {
        & r_{yz}r_x
        \ar@2[r]^-{\alpha_{yz,x}}
        & r_y r_{xyz}
        \\
r_y r_z r_x
        \ar@2@/^/[ur]^-{\alpha_{y,z}1_{r_x}}
        \ar@2@/_/[dr]_-{\lambda_{x,y,z}^r}
        \\
        & r_y r_x r_z
        \ar@2[r]_-{1_{r_y}\alpha_{x,z}}
        & r_y r_{xz}
        \ar@2[uu]_-{\alpha_{y,xz}}
}
\qquad
\qquad
\xymatrix @R=0.3em @C=2em @!C {
        & r_z r_{xy}
        \ar@2[r]^-{\alpha_{z,xy}}
        & r_z r_{xyz}
        \\
r_z r_x r_y
        \ar@2@/^/[ur]^-{1_{r_z}\alpha_{x,y}}
        \ar@2@/_/[dr]_-{\kappa_{x,y,z}^r}
        \\
        & r_x r_z r_y
        \ar@2[r]_-{\alpha_{x,z}1_{r_y}}
        & r_{xz} r_y
        \ar@2[uu]_-{\alpha_{xz,y}}
}
\]
In the degenerate cases, one similarly has
\[
\xymatrix @R=0.3em @C=2em @!C {
        & r_y r_x
        \ar@2[r]^-{\alpha_{y,x}}
        & r_y r_{xy}
        \\
r_y r_y r_x
        \ar@2@/^/[ur]^-{\alpha_{y,y}1_{r_x}}
        \ar@2@/_/[dr]_-{\lambda_{x,y,y}^r}
        \\
        & r_y r_x r_y
        \ar@2[uur]_-{1_{r_y}\alpha_{x,y}}
}
\qquad
\qquad
\xymatrix @R=0.3em @C=2em @!C {
        &
        r_z r_x
        \ar@2[r]^-{\alpha_{z,x}}
        &
        r_z r_{xz}
        \\
r_z r_x r_x
        \ar@2@/^/[ur]^-{1_{r_z}\alpha_{x,x}}
        \ar@2@/_/[dr]_-{\kappa_{x,x,z}^r}
        \\
        &
        r_x r_z r_x
        \ar@2[r]_-{\alpha_{x,z}1_{r_x}}
        &
        r_{xz} r_x
        \ar@2[uu]_-{\alpha_{xz,x}}
}
\]
Each displayed diagram gives a parallel $2$-cell in the track $2$-category
generated by the row cells. Hence the corresponding lifted stylic relation is
redundant after the row cells have been adjoined.

It remains to study the compression $2$-cells. If $u=x_1x_2$ has length $2$,
then $\gamma_u$ is parallel to the row $2$-cell~$\alpha_{x_1,x_2}$.
Hence $\gamma_u$ is redundant after adjoining the row $2$-cells.
Now let~$u=x_1\cdots x_p$, with~$p\ge3$, and set~$u^-:=x_1\cdots x_{p-1}$.
Since $u^-x_p=u$ is a row, the row $2$-cell~$\alpha_{u^-,x_p}$ is defined. Hence $\gamma_u$ is parallel to the composite
\[
(\gamma_{u^-}1_{r_{x_p}})
\star_1
\alpha_{u^-,x_p}.
\]
By induction on $\ell(u)$, every compression $2$-cell $\gamma_u$ is therefore
parallel, after adjoining the row $2$-cells, to a composite of row $2$-cells.

Consequently, after adjoining the row $2$-cells, all lifted stylic relations and
all compression cells are redundant and may be eliminated by elementary Tietze transformations. The remaining generating $2$-cells are precisely the row cells
$\alpha_{u,v}$. Hence the resulting $2$-polygraph is
$\Sigma^{\mathrm{row}}(n)$,  showing that~$\Sigma^{\mathrm{prerow}}(n)$ and~$\Sigma^{\mathrm{row}}(n)$ are Tietze equivalent.
\end{proof}

\section{A coherent extension of the row presentation}
\label{S:CoherentPresentationStylicMonoid}

In this section, we construct a coherent extension of the finite convergent row
presentation of the stylic monoid. We first recall the
notion of coherent presentations and Squier's coherence theorem. We then describe the
critical branchings of the row presentation, compute explicit generating confluences,
and adjoin one generating $3$-cell for each of them. This yields a finite coherent presentation of the stylic monoid.

\subsection{Coherent presentations}
\label{SS:CoherentPresentations}
We recall the notion of coherent presentation in the language of higher-dimensional rewriting, following~\cite{GuiraudMalbos18,GaussentGuiraudMalbos15,PolygraphsBook}.

Let~$\Sigma$ be a $2$-polygraph.
A pair~$(f,g)$ of $2$-cells in~$\Sigma_2^\top$ such that~$s_1(f)=s_1(g)$ and~$t_1(f)=t_1(g)$ is called a \emph{$2$-sphere} of~$\Sigma_2^\top$.
A \emph{$(3,1)$-polygraph} is a pair $(\Sigma,\Sigma_3)$ consisting
of a $2$-polygraph~$\Sigma$ together with a cellular extension
$\Sigma_3$ of the $(2,1)$-category~$\Sigma_2^\top$. Thus
$\Sigma_3$ is a family of generating $3$-cells of the form
\[
A\colon f\tfl g,
\]
where $(f,g)$ is a $2$-sphere of~$\Sigma_2^\top$.
The $2$-cells $f$ and $g$ are respectively called the \emph{source} and the
\emph{target} of~$A$, and are denoted by~$s_2(A)$ and~$t_2(A)$.
Such a $3$-cell may be represented by one of the following globular diagrams:
\[
\xymatrix @C=8em {
\bullet
	\ar @/^4ex/ [r] ^-{u} _-{}="src"
	\ar @/_4ex/ [r] _-{v} ^-{}="tgt"
	\ar@2 "src"!<-15pt,-10pt>;"tgt"!<-15pt,10pt> _-*+{f} ^-{}="srcA"
	\ar@2 "src"!<+15pt,-10pt>;"tgt"!<+15pt,10pt> ^-*+{g} _-{}="tgtA"
	\ar@3 "srcA"!<8pt,0pt> ; "tgtA"!<-8pt,0pt> ^-{A}
&
\bullet
}
\qquad\text{or equivalently}\qquad
\xymatrix@!C@C=4.5em{
u
	\ar@2@/^3ex/ [r] ^{f} _{}="src"
	\ar@2@/_3ex/ [r] _{g} ^{}="tgt"
&
v
\ar@3 "src"!<0pt,-7pt>;"tgt"!<0pt,7pt> ^-{A}
}
\]
where~$\bullet$ denotes the unique $0$-cell of~$\Sigma$.

We denote by~$\tck{\Sigma}_3$ the free $(3,1)$-category generated by
$(\Sigma,\Sigma_3)$. In this category, all $2$-cells and $3$-cells are
invertible. 
We denote the $2$-composition of $3$-cells by~$\star_2$ and use
$\star_1$ for their $1$-composition, including whiskering by
$2$-cell contexts. For a $2$-cell~$f$, we denote by~$1_f$ the
identity $3$-cell on~$f$.
The cellular extension~$\Sigma_3$ is called a
\emph{homotopy basis} of~$\Sigma_2^\top$ if every $2$-sphere
$(f,g)$ in~$\Sigma_2^\top$ admits a filling, that is, if there exists
a $3$-cell~$A$ in~$\tck{\Sigma}_3$ such that~$s_2(A)=f$ and
$t_2(A)=g$.
An \emph{extended presentation} of a monoid~$M$ is a
$(3,1)$-polygraph whose underlying $2$-polygraph presents~$M$.
It is a \emph{coherent presentation} if~$\Sigma_3$ is a homotopy
basis of~$\Sigma_2^\top$.

\emph{Squier's coherence theorem} asserts that every convergent
$2$-polygraph presenting a monoid~$M$ can be extended to a coherent
presentation of~$M$ by adjoining, for each critical branching~$(f,g)$,
a generating $3$-cell between the two rewriting paths obtained by
completing~$f$ and~$g$ to a common target
\cite{Squier94,GuiraudMalbos18}. More precisely, one adds a generating
$3$-cell of the form
\[
\xymatrix @R=0.8em @C=2.5em @!C {
        & v
        \ar@2@/^/[dr]^-{f'}
        \ar@3[]!<0pt,-10pt>;[dd]!<0pt,10pt>
          ^-*+{A_{f,g}}
        \\
        u
        \ar@2@/^/[ur]^-{f}
        \ar@2@/_/[dr]_-{g}
        && w
        \\
        & v'
        \ar@2@/_/[ur]_-{g'}
}
\]
where $f'$ and $g'$ are rewriting paths with common target~$w$.
The generating $3$-cells $A_{f,g}$ obtained in this way are called the
\emph{generating confluences}  associated
with the critical branchings~$(f,g)$.

A generating $3$-cell~$A$ is called \emph{redundant} if, after removing~$A$, the generated $(3,1)$-category contains a parallel composite
$3$-cell. Such a generating cell may be adjoined or eliminated by a
coherent Tietze transformation.
Let~$(\Sigma,\Sigma_3)$ be a coherent presentation, and let
\[
\mathcal T\colon
\Sigma\rightsquigarrow\Upsilon
\]
be a finite Tietze sequence between the underlying $2$-polygraphs. At
each adjunction or elimination of a redundant generating $2$-cell
$\beta\colon u\dfl v$, choose a parallel path~$\widehat\beta\colon u\dfl v$
in the track $2$-category generated without~$\beta$.
The sequence~$\mathcal T$ lifts step by step to coherent
presentations. The adjunction or elimination of a redundant generating
$1$-cell together with its collapsible $2$-cell already defines a
coherent Tietze transformation. When a redundant generating $2$-cell
$\beta$ is adjoined, one also adjoins~$B_\beta\colon\widehat\beta\tfl~\beta$.
Since~$\widehat\beta$ does not contain~$\beta$, the pair
$(\beta,B_\beta)$ is collapsible.
Conversely, suppose that~$\beta$ is to be eliminated. Coherence gives
a filling~$H_\beta\colon\widehat\beta\tfl\beta$ of the $2$-sphere~$(\widehat\beta,\beta)$. One first adjoins a
generating $3$-cell~$B_\beta\colon\widehat\beta\tfl~\beta$ parallel to~$H_\beta$, and then eliminates the collapsible pair~$(\beta,B_\beta)$. Thus~$H_\beta$ shows that~$B_\beta$ is redundant,
while~$B_\beta$ makes~$\beta$ collapsible.
After this elimination, the boundaries of the remaining generating
$3$-cells are transported by the induced track $2$-functor~$\Phi_\beta$. This functor sends~$\beta$ to~$\widehat\beta$ and fixes every other generating $2$-cell. Hence a generating $3$-cell~$A\colon f\tfl g$ is transported to~$A^\beta\colon
\Phi_\beta(f)\tfl\Phi_\beta(g)$.
Iterating this construction yields a cellular extension~$\Upsilon_3$
and a finite Tietze transformation
\[
\widetilde{\mathcal T}\colon
(\Sigma,\Sigma_3)
\rightsquigarrow
(\Upsilon,\Upsilon_3).
\]
Tietze transformations preserve the presented monoid and coherence
\cite[Theorem~2.1.3]{GaussentGuiraudMalbos15}. Hence
$(\Upsilon,\Upsilon_3)$ is a coherent presentation of the same monoid.
If~$(\Sigma,\Sigma_3)$ and~$\mathcal T$ are finite, then
$(\Upsilon,\Upsilon_3)$ is finite. We call~$\widetilde{\mathcal T}$ a
\emph{coherent lift} of~$\mathcal T$.

\subsection{A coherent row presentation of the stylic monoid}
\label{SS:CoherentPresentationStylicMonoid}

The goal of this subsection is to extend the finite convergent row presentation
$\Sigma^{\mathrm{row}}(n)$ to a coherent presentation by adjoining one generating
$3$-cell for each critical branching of the generating $2$-cells~$\alpha_{u,v}$.

\begin{lemma}
\label{L:CriticalBranchingsRow}
The critical branchings of the row presentation $\Sigma^{\mathrm{row}}(n)$ are
exactly the branchings
\[
\bigl(\alpha_{u,v}1_{r_t},\,1_{r_u}\alpha_{v,t}\bigr)
\]
for triples $u,v,t\in \Rr_n$ with $u\not\triangleleft v$ and $v\not\triangleleft t$.
\end{lemma}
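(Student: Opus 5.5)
The plan is to classify all overlapping local branchings of $\Sigma^{\mathrm{row}}(n)$ directly from the shape of the sources of the generating $2$-cells. Every generating $2$-cell $\alpha_{u,v}$ has source $r_ur_v$, a word of length exactly two in the generators $\Sigma^{\mathrm{row}}_1(n)$. A local branching is a pair of rewriting steps $(w\alpha_{u,v}w',\ w''\alpha_{u',v'}w''')$ with the same source. Let $W$ be this common source, and view it as a word in the row generators. Each step rewrites a factor of length two of $W$, at some position.

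I will then split into cases according to the relative position of the two factors.

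\textbf{Same position.} Both steps rewrite the same factor $r_ur_v$. The generating $2$-cell with source $r_ur_v$ is unique, namely $\alpha_{u,v}$, since generating $1$-cells are distinct symbols and the $2$-cells are indexed by the pair $(u,v)$. So the two steps coincide and the branching is aspherical.

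\textbf{Disjoint positions.} The two factors do not share a letter. Then the branching has the form $(f\,x\,g'\!,\ f'\,x\,g)$, that is, a Peiffer branching, possibly after whiskering.

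\textbf{Overlapping positions.} The only remaining possibility is that the factors overlap in exactly one letter, because each has length two. Then the common source contains a factor $r_ur_vr_t$ and the branching is $(w\,\alpha_{u,v}1_{r_t}\,w',\ w\,1_{r_u}\alpha_{v,t}\,w')$. Both steps exist precisely when $u\not\triangleleft v$ and $v\not\triangleleft t$.

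Next comes minimality. Any overlapping branching is of the form $(wfw',wgw')$ with $(f,g)=(\alpha_{u,v}1_{r_t},1_{r_u}\alpha_{v,t})$, so only the branchings with $w=w'=1$ can be minimal. Conversely, such a branching is minimal. If $(\alpha_{u,v}1_{r_t},1_{r_u}\alpha_{v,t})=(w\phi w',w\psi w')$ with $(\phi,\psi)$ a local branching, then the source $r_ur_vr_t$ factors as $w\,s_1(\phi)\,w'$. Both steps must lie inside $s_1(\phi)$, and $s_1(\phi)$ must cover both the factor $r_ur_v$ and the factor $r_vr_t$. This forces $w=w'=1$. I must also handle the order in the pair: the branching $(1_{r_u}\alpha_{v,t},\alpha_{u,v}1_{r_t})$ is the same up to symmetry, and I will treat branchings as unordered, as is standard.

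The main (though mild) obstacle is the uniqueness claim in the first case. It needs the observation that the rewriting rules are indexed by their sources, so no two distinct generating $2$-cells share a source. It also needs the fact that occurrences of generators in a free monoid on $\Sigma^{\mathrm{row}}_1(n)$ are unambiguous: $r_u$ is a single letter, not the word $u$. I will make this explicit so that the overlap analysis is purely about positions of length-two factors.
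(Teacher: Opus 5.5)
Your proposal is correct and follows the same route as the paper. Since every row rule has a source of length two, overlapping local branchings are exactly overlaps of two consecutive redexes in a factor $r_ur_vr_t$, and such branchings with no surrounding context are critical. The paper compresses into two sentences what you spell out: the aspherical/Peiffer/overlapping case split, based on the fact that each source $r_ur_v$ carries a unique rule, and the minimality check that forces $w=w'=1$.
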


\begin{proof}
Since every row rewriting rule has source of length~$2$, a minimal overlapping
branching is obtained exactly by overlapping two consecutive rules in a word of
length~$3$. Hence its source is $r_ur_vr_t$, and the branching has the form~$\bigl(\alpha_{u,v}1_{r_t},\,1_{r_u}\alpha_{v,t}\bigr)$, with $u\not\triangleleft v$ and $v\not\triangleleft t$. Conversely, every such
branching is overlapping and has no nontrivial context, hence is critical.
\end{proof}

\begin{lemma}
\label{L:UsefulReduciblePairsCriticalTriple}
Let $(u,v,t)\in\Rr_n^3$ be a critical triple. Write $\R(N(uv))=ab$ with
$a\in\Rr_n\cup\{\varepsilon\}$ and $b\in\Rr_n$, and write
$\R(N(vt))=cd$ with $c\in\Rr_n\cup\{\varepsilon\}$ and $d\in\Rr_n$.
Then:
\begin{enumerate}[\bf i)]
\item one has $b\not\triangleleft t$;
\item if $c\neq\varepsilon$, then $u\not\triangleleft c$.
\end{enumerate}
In particular, the generating $2$-cells $\alpha_{b,t}$ and, when $c\neq\varepsilon$,
$\alpha_{u,c}$ are applicable.
\end{lemma}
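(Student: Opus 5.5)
The plan is to argue both items by contradiction, transferring a hypothetical relation $\triangleleft$ back to one of the pairs $(u,v)$ or $(v,t)$. This contradicts the standing assumptions $u\not\triangleleft v$ and $v\not\triangleleft t$ of a critical triple from Lemma~\ref{L:CriticalBranchingsRow}. The only inputs needed are the support descriptions of the rows of $N(uv)$ and $N(vt)$, namely Lemmas~\ref{L:BottomRowUnion} and~\ref{L:TopRowInclusion}, together with the monotonicity Lemma~\ref{L:TriangleleftMonotonicity}.

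For {\bf i)}, Lemma~\ref{L:BottomRowUnion} gives $\supp(b)=\supp(u)\cup\supp(v)$, whether $N(uv)$ has one row or two. Hence $\supp(v)\subseteq\supp(b)$ and $\min(b)\le\min(v)$. Suppose $b\triangleleft t$, so that $\supp(b)\subseteq\supp(t)$ and $\min(t)<\min(b)$. Then $\supp(v)\subseteq\supp(t)$ and $\min(t)<\min(b)\le\min(v)$, that is, $v\triangleleft t$. This contradicts the hypothesis, so $b\not\triangleleft t$.

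For {\bf ii)}, assume $c\neq\varepsilon$, so that $N(vt)$ has two rows. Lemma~\ref{L:TopRowInclusion} then gives $\supp(c)\subseteq\supp(v)$, and in particular $\min(v)\le\min(c)$. If we had $u\triangleleft c$, Lemma~\ref{L:TriangleleftMonotonicity} would apply to the triple $(u,c,v)$, with $c$ in the middle role and $v$ in the role of $t$. Its hypotheses are exactly $u\triangleleft c$, $\supp(c)\subseteq\supp(v)$ and $\min(v)\le\min(c)$, and it would yield $u\triangleleft v$. This contradicts $u\not\triangleleft v$, so $u\not\triangleleft c$.

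The final assertion then follows directly from the definition of $\Sigma^{\mathrm{row}}_2(n)$, since $\alpha_{p,q}$ exists precisely when $p\not\triangleleft q$. There is no real obstacle here. The only point needing care is step {\bf ii)}, where one must invoke the top-row inclusion $\supp(c)\subseteq\supp(v)$ of Lemma~\ref{L:TopRowInclusion} (applied to the pair $(v,t)$) rather than anything about $t$. In step {\bf i)} one should note that the case where $N(uv)$ has a single row, so that $a=\varepsilon$, is covered by the same argument.
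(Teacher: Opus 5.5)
Your proposal is correct and follows essentially the same argument as the paper. Part~{\bf i)} is identical. In part~{\bf ii)}, the paper writes out the chain $\supp(u)\subseteq\supp(c)\subseteq\supp(v)$ and $\min(v)\le\min(c)<\min(u)$ directly, whereas you cite Lemma~\ref{L:TriangleleftMonotonicity} applied to $(u,c,v)$, which amounts to the same thing.
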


\begin{proof}
By Lemma~\ref{L:BottomRowUnion}, the bottom row $b$ of $N(uv)$ satisfies
$\supp(b)=\supp(u)\cup\supp(v)$. In particular,
$\supp(v)\subseteq\supp(b)$ and $\min(b)\le\min(v)$.
If $b\triangleleft t$, then $\supp(b)\subseteq\supp(t)$ and
$\min(t)<\min(b)$. Hence $\supp(v)\subseteq\supp(t)$ and
$\min(t)<\min(v)$, which gives $v\triangleleft t$, contradicting the
criticality of $(u,v,t)$. Therefore $b\not\triangleleft t$.
Suppose now that $c\neq\varepsilon$. Then $N(vt)$ has two rows, with top row
$c$ and bottom row $d$. By Lemma~\ref{L:TopRowInclusion}, one has
$\supp(c)\subseteq\supp(v)$. In particular, $\min(v)\le\min(c)$.
If $u\triangleleft c$, then $\supp(u)\subseteq\supp(c)\subseteq\supp(v)$ and
$\min(v)\le\min(c)<\min(u)$. Thus $u\triangleleft v$, again contradicting the
criticality of $(u,v,t)$. Therefore~$u\not\triangleleft~c$.
\end{proof}

Retain the notation of Lemma~\ref{L:UsefulReduciblePairsCriticalTriple}.
The critical branching associated with~$(u,v,t)$ is
\[
r_u r_v r_t
\odfl{\alpha_{u,v}1_{r_t}}
r_a r_b r_t,
\qquad
r_u r_v r_t
\odfl{1_{r_u}\alpha_{v,t}}
r_u r_c r_d,
\]
where the factors~$r_a$ and~$r_c$ are omitted when
$a=\varepsilon$ and~$c=\varepsilon$, respectively.

For any triple of rows~$(u,v,t)$, set~$\operatorname{NF}(u,v,t):=\Rr(N(uvt))$.
This word is irreducible, since it is the row-generator reading of an
$N$-tableau, and it represents the same stylic element as~$r_ur_vr_t$.
By convergence of~$\Sigma^{\mathrm{row}}(n)$, it is therefore the unique
normal form of~$r_ur_vr_t$. In particular, the two branches above have
common normal form~$\operatorname{NF}(u,v,t)$.

We first establish the bound on the number of rewriting steps, which will 
determine the possible shapes of the confluence diagrams in the case analysis.

\begin{proposition}
\label{Prop:boundedBranching}
Let $(u,v,t)\in \Rr_n^3$ be a critical triple. Then, after either of the two initial branching steps~$\alpha_{u,v}1_{r_t}$ or~$1_{r_u}\alpha_{v,t}$,
the resulting word reaches the normal form $\operatorname{NF}(u,v,t)$ in at
most two further rewriting steps. Moreover:
\begin{enumerate}[\bf i)]
\item The normal form has at most three rows.
\item If $N(uv)$ has one row then the left branch reaches the normal 
      form in exactly one further step.
\item If $N(vt)$ has one row then the right branch reaches the normal 
      form in at most one further step.
\item If both $N(uv)$ and $N(vt)$ have one row then the normal form 
      has one row.
\end{enumerate}
\end{proposition}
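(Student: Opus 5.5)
Each branch is followed with the rule that applies at the unique reducible adjacent pair. The bookkeeping uses the support and minimum data of Lemmas~\ref{L:BottomRowUnion}, \ref{L:TopRowInclusion} and~\ref{L:TriangleleftMonotonicity}. Two facts about the final tableau are used throughout. First, $N(uvt)$ is obtained by inserting the row $t$ into the tableau $N(uv)$, which has rows $a$ over $b$. Second, by Lemma~\ref{L:BottomRowUnion} applied twice, its bottom row is $u\cup v\cup t$.

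For~\textbf{i)}, the plan is to adapt the proof of Lemma~\ref{L:TwoRowsProductRows}. In the insertion of $t$ into $b$, the letters bumped from the bottom row form a weakly increasing sequence. That argument only used that $t$ is a row, so it applies here. These letters are then inserted into $a$. By the same argument, the letters bumped out of $a$ also form a weakly increasing sequence, since the inserted sequence is weakly increasing. They therefore land in the third row without bumping. So at most three rows occur.

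\textbf{Left branch.} After $\alpha_{u,v}1_{r_t}$ we have $r_ar_br_t$. By Lemma~\ref{L:UsefulReduciblePairsCriticalTriple}, $\alpha_{b,t}$ applies and produces $r_ar_{e}r_{f}$, where $f=b\cup t=u\cup v\cup t$ and $e$ is the possibly empty row of letters bumped from $b$.
\begin{itemize}
\item If $a=\varepsilon$, this word has at most two letters. It is the normal form, because $N(bt)=N(uvt)$ in the stylic monoid and irreducible readings are unique. This gives~\textbf{ii)}.
\item If $a\neq\varepsilon$, we have $a\triangleleft b$.
\begin{itemize}
\item If $e=\varepsilon$, then $a\triangleleft f$ by Lemma~\ref{L:TriangleleftMonotonicity}, since $\supp(b)\subseteq\supp(f)$ and $\min(f)\le\min(b)$. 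So we are done in one step.
\item If $e\neq\varepsilon$, the remaining pair $r_ar_e$ is possibly reducible, and one step $\alpha_{a,e}$ gives $r_{a'}r_{e'}r_f$. The step I expect to be the main obstacle is to check that the resulting word is irreducible. For $e'\triangleleft f$, the plan is: $\supp(e')=\supp(a)\cup\supp(e)\subseteq\supp(b)\subseteq\supp(f)$, using $\supp(e)\subseteq\supp(b)$ by Lemma~\ref{L:TopRowInclusion}. For the minimum condition, one needs $\min(f)<\min(a)$ and $\min(f)<\min(e)$. The first follows from $a\triangleleft b$. The second holds because a bumped letter is strictly larger than the inserted letter of $t$, and $f$ contains that letter. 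Then $a'\triangleleft e'$ holds because $N(ae)$ is an $N$-tableau.
\end{itemize}
\end{itemize}
So the left branch takes at most two further steps.

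\textbf{Right branch.} After $1_{r_u}\alpha_{v,t}$ we have $r_ur_cr_d$.
\begin{itemize}
\item If $c=\varepsilon$, apply $\alpha_{u,d}$. This is one step, and its target is the normal form because it is the reading of $N(ud)$, an $N$-tableau in the same class. This gives~\textbf{iii)}. If moreover $a=\varepsilon$, the left branch gives a one-row normal form $b\cup t$, which yields~\textbf{iv)}.
\item If $c\neq\varepsilon$, Lemma~\ref{L:UsefulReduciblePairsCriticalTriple} lets $\alpha_{u,c}$ apply, giving $r_gr_hr_d$, where $\supp(h)=\supp(u)\cup\supp(c)$ and $g$ is the possibly empty row of bumped letters.
\begin{itemize}
\item If $g=\varepsilon$, then $r_hr_d$ may still need one more step $\alpha_{h,d}$. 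Its result has at most two rows and is then normal.
\item If $g\neq\varepsilon$, one must show that $r_hr_d$ is either already normal or rewrites in one step to a word whose first pair is normal. This uses $\supp(c)\subseteq\supp(v)$ and the minimality comparisons as above.
\end{itemize}
\end{itemize}
This right-branch case split, especially checking that no third step is ever needed, is where most of the work lies. I would organize it by whether $\min(u)$ is below $\min(v\cup t)$, following the case list in the appendix.
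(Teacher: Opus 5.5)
Your left-branch analysis is correct and matches the paper. Your direct proof of \textbf{i)} is also valid, and it is a genuinely different route: the letters bumped out of $b$ form a weakly increasing sequence, hence so do those bumped out of $a$, so the third row never bumps. The paper instead reads off \textbf{i)} from the terminal words of the two branches.

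The genuine gap is the right branch in the case $c\neq\varepsilon$, $g\neq\varepsilon$, $h\not\triangleleft d$. This is exactly where the bound of at most two further steps is at stake, and you leave it as a plan. After $1_{r_g}\alpha_{h,d}$ one reaches $r_gr_pr_q$ with $\R(N(hd))=pq$, and one must prove $p\neq\varepsilon$ and $g\triangleleft p$.

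The inclusion $\supp(g)\subseteq\supp(p)$ does not follow from the support and minimum bookkeeping you invoke. Here $\supp(p)$ is the set of letters bumped out of $h$ while $d$ is inserted, so you must show that every $y\in\supp(g)$ is bumped a second time. The paper argues as follows:
\begin{enumerate}[\bf 1)]
\item The letter $y$ was bumped from $u$ by some $x\in\supp(c)$, so $y=\min\{z\in\supp(u)\mid z>x\}$.
\item Set $x'=\max\{z\in\supp(d)\mid z<y\}$. It exists because $\supp(c)\subseteq\supp(d)$, which comes from $c\triangleleft d$. The fact $\supp(c)\subseteq\supp(v)$ that you cite is not the relevant one.
\item Since $\supp(h)=\supp(u)\cup\supp(c)$, minimality of $y$ shows that no letter of $h$ lies strictly between $x'$ and $y$.
\item Since $d$ is inserted in increasing order, no such letter is created before $x'$ arrives. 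Hence inserting $x'$ bumps $y$, unless $y$ was already bumped.
\end{enumerate}
You also need two further facts. First, $p\neq\varepsilon$, which follows from $\min(d)<\min(c)\le\max(h)$ and Lemma~\ref{L:OneRowCriterion}. Second, $\min(p)\le\min(c)<\min(g)$: inserting $\min(d)$ first bumps a letter at most $\min(c)$, while every letter of $g$ exceeds the letter of $c$ that bumped it. Your bound \textbf{i)} does not substitute for this argument: it controls the shape of $N(uvt)$, not the number of rewriting steps taken to reach it.

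Your fallback of following the case list of the appendix is circular. In cases $4\mathrm b2$ and $4\mathrm b4$, the appendix obtains $i\triangleleft s$ precisely by invoking the right-branch analysis in the proof of this proposition.

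Two smaller points also need repair. For \textbf{iv)} you assert without argument that $N(bt)$ has one row when $a=c=\varepsilon$; this follows from $\max(b)=\max(v)\le\min(t)$ and Lemma~\ref{L:OneRowCriterion}. In the case $c=\varepsilon$ you should allow $u\triangleleft d$, where no further step occurs.
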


\begin{proof}
Since $\Sigma^{\mathrm{row}}(n)$ is convergent, every irreducible word reached
from $r_u r_v r_t$ is necessarily the normal form
$\operatorname{NF}(u,v,t)$.

We first study the left branch.
Write $\R(N(uv))=ab$ with $a\in\Rr_n\cup\{\varepsilon\}$ and $b\in\Rr_n$.
After the first step $\alpha_{u,v}1_{r_t}$, the left branch reaches 
$r_a r_b r_t$ with $r_a$ omitted if $a=\varepsilon$.
Since $a\triangleleft b$ when $a\neq\varepsilon$, only the pair~$(b,t)$ 
can be reducible.
By Lemma~\ref{L:UsefulReduciblePairsCriticalTriple}, one has
$b\not\triangleleft t$. Hence~$\alpha_{b,t}$ is necessarily applicable. Write~$\R(N(bt))=ef$,
with $e\in\Rr_n\cup\{\varepsilon\}$ and $f\in\Rr_n$, and apply the $2$-cell
$\alpha_{b,t}$. Thus, after  one further rewriting step, the left
branch reaches a word of the form $r_a r_e r_f$, where $r_a$ and $r_e$ are
omitted whenever $a=\varepsilon$ or $e=\varepsilon$. Moreover, when
$e\neq\varepsilon$, one has $e\triangleleft f$.

If $a=\varepsilon$, then the word $r_e r_f$ is already in normal form, with
$r_e$ omitted if $e=\varepsilon$, because either $e=\varepsilon$ or
$e\triangleleft f$. This proves~{\bf ii)}.

Assume now that $a\neq\varepsilon$.
If $e=\varepsilon$, then $\R(N(bt))=f$. Since $a\triangleleft b$, one has
$\supp(a)\subseteq\supp(b)$ and $\min(b)<\min(a)$. By
Lemma~\ref{L:BottomRowUnion}, $\supp(b)\subseteq\supp(f)$, and clearly
$\min(f)\leq \min(b)$. Hence $\supp(a)\subseteq\supp(f)$ and
$\min(f)<\min(a)$, so $a\triangleleft f$. Thus $r_a r_f$ is already in
normal form.

If $e\neq\varepsilon$ and $a\triangleleft e$, the word $r_a r_e r_f$ is
already in normal form.
If $e\neq\varepsilon$ and $a\not\triangleleft e$, one further step
$\alpha_{a,e}1_{r_f}$ is needed. Write $\R(N(ae))=pq$, with
$p\in\Rr_n\cup\{\varepsilon\}$ and $q\in\Rr_n$.
Since $\R(N(ae))=pq$, one has $p\triangleleft q$ whenever
$p\neq\varepsilon$. We claim that $q\triangleleft f$.
By Lemma~\ref{L:BottomRowUnion}, $\supp(q)=\supp(a)\cup\supp(e)$.
Since $a\triangleleft b$, we have
$\supp(a)\subseteq\supp(b)\subseteq\supp(f)$.
Since $e\triangleleft f$, we have $\supp(e)\subseteq\supp(f)$.
Hence $\supp(q)\subseteq\supp(f)$.
Moreover, $\min(f)\le\min(b)<\min(a)$ and $\min(f)<\min(e)$, so
$\min(f)<\min(q)$. Therefore $q\triangleleft f$.
Thus the word $r_p r_q r_f$, with $r_p$ omitted if $p=\varepsilon$, is in
normal form. Hence the left branch reaches the normal form in at most two
further steps.

Let us now study the right branch.
Write $\R(N(vt))=cd$ with $c\in\Rr_n\cup\{\varepsilon\}$ and $d\in\Rr_n$.
After the first step $1_{r_u}\alpha_{v,t}$, the right branch reaches
$r_u r_c r_d$ with $r_c$ omitted if $c=\varepsilon$.

\begin{enumerate}[\bf A)]
\item Suppose that~$c=\varepsilon$.
If $u\triangleleft d$, then $r_u r_d$ is already in normal form.
If $u\not\triangleleft d$, write $\R(N(ud))=pq$, with
$p\in\Rr_n\cup\{\varepsilon\}$ and $q\in\Rr_n$.
Then one further step $\alpha_{u,d}$ yields $r_p r_q$.
Since $p\triangleleft q$ whenever $p\neq\varepsilon$, the target is in normal
form. This gives at most one further step, proving~{\bf iii)}.

\item Suppose that $c\neq\varepsilon$.
Since $c\triangleleft d$, only the pair $(u,c)$ can be reducible.
By Lemma~\ref{L:UsefulReduciblePairsCriticalTriple}, one has
$u\not\triangleleft c$. Hence the generating $2$-cell
$\alpha_{u,c}$ is necessarily applicable.
Write~$\R(N(uc))=ij$,
with $i\in\Rr_n\cup\{\varepsilon\}$ and $j\in\Rr_n$, and apply the $2$-cell
$\alpha_{u,c}$. Thus, after at most one further rewriting step, the right
branch reaches a word of the form $r_i r_j r_d$, where $r_i$ is omitted if
$i=\varepsilon$. Moreover, when $i\neq\varepsilon$, one has
$i\triangleleft j$.

 Suppose that~$i=\varepsilon$.
If $j\triangleleft d$, the word~$r_j r_d$ is already in normal form.
If $j\not\triangleleft d$, write $\R(N(jd))=pq$, with
$p\in\Rr_n\cup\{\varepsilon\}$ and $q\in\Rr_n$.
Then one further step $\alpha_{j,d}$ yields $r_p r_q$.
Since $p\triangleleft q$ whenever $p\neq\varepsilon$, the target is in normal
form. Thus in this case the right branch reaches the normal form in at most
two further steps.

Suppose that~$i\neq\varepsilon$.
Since $i\triangleleft j$, the word $r_i r_j r_d$ is already in normal form
whenever $j\triangleleft d$.
Assume now that $j\not\triangleleft d$. Then one further step
$1_{r_i}\alpha_{j,d}$ is needed. Write~$\R(N(jd))=pq$,
with $p\in\Rr_n\cup\{\varepsilon\}$ and $q\in\Rr_n$.
We prove that $p\neq\varepsilon$ and $i\triangleleft p$, so that
$r_i r_p r_q$ is in normal form.

\begin{enumerate}[\bf a)]
\item Prove that $p\neq\varepsilon$.
By Lemma~\ref{L:UsefulReduciblePairsCriticalTriple},~$\alpha_{u,c}$ has already been applied. Since
$\R(N(uc))=~ij$, Lemma~\ref{L:BottomRowUnion} gives
\[
\supp(j)=\supp(u)\cup\supp(c).
\]
In particular, $\supp(c)\subseteq~\supp(j)$, so $\max(j)\ge~\min(c)$.
Since $c\triangleleft d$, one has $\min(d)<\min(c)$. Hence
$\min(d)<\max(j)$. By Lemma~\ref{L:OneRowCriterion}, the tableau $N(jd)$
has two rows, and therefore $p\neq\varepsilon$.

\item Prove now that $i\triangleleft p$.
We first show that $\supp(i)\subseteq\supp(p)$.
Consider $y\in\supp(i)$.
Since $i$ is the top row of $N(uc)$, every letter of $i$ is one of the
letters bumped from the bottom row during the insertion of the letters of
$c$ into $u$. Thus the letter $y$ was bumped from $u$ by some
$x\in\supp(c)$ during the computation of $N(uc)$. Since the letters of
$c$ are inserted in increasing order, one has
\[
y=\min\{z\in\supp(u)\mid z>x\}.
\]
Set~$x'=\max\{z\in\supp(d)\mid z<y\}$.
This is well defined because $x\in\supp(c)\subseteq\supp(d)$ and~$x<y$.
If there existed $z\in\supp(j)$ with $x'<z<y$, then $z\notin\supp(d)$ by
maximality of~$x'$. Since~$\supp(c)\subseteq\supp(d)$, this implies
$z\notin\supp(c)$. But $\supp(j)=\supp(u)\cup\supp(c)$, so
$z\in\supp(u)$. Moreover, $x\le x'<z<y$, contradicting the minimality of
$y$. Therefore there is no letter of $\supp(j)$ strictly between $x'$ and~$y$.
The bumped letter $y$ remains in the bottom row during the computation of
$N(uc)$, so $y\in\supp(j)$. If $y$ has already been bumped before the
insertion of $x'$ during the computation of $N(jd)$, then
$y\in\supp(p)$. Otherwise, since the letters of $d$ are inserted in
increasing order, all letters inserted before $x'$ are smaller than $x'$ and
therefore do not create any new letter strictly between $x'$ and $y$.
Hence, when $x'$ is inserted into the current bottom row, the letter $y$ is
bumped. Thus $y\in\supp(p)$, showing that~$\supp(i)\subseteq\supp(p)$.
It remains to prove that $\min(p)<\min(i)$.
Every letter of $i$ was bumped by some letter of~$c$, so
$\min(i)>\min(c)$. Since $\supp(c)\subseteq\supp(j)$, the letter $\min(c)$
belongs to $j$. Because $\min(d)<\min(c)$, inserting $\min(d)$ into $j$
bumps a letter less than or equal to $\min(c)$ into $\supp(p)$. Hence~$\min(p)\le\min(c)<\min(i)$.
Therefore $i\triangleleft p$.
Thus $r_i r_p r_q$ is in normal form.
\end{enumerate}
\end{enumerate}

Hence, the right branch reaches the normal form in at most two further steps.

In the worst case, the irreducible word reached from either branch has three
row generators, namely~$r_p r_q r_f$ on the left or $r_i r_p r_q$ on the
right. Since this irreducible word is $\operatorname{NF}(u,v,t)$, the normal
form has at most three rows. This proves~{\bf i)}.

Finally, suppose that both $N(uv)$ and $N(vt)$ have one row. Then
$a=\varepsilon$ and $c=\varepsilon$. By Lemma~\ref{L:OneRowCriterion}, one has
$\min(v)\ge\max(u)$ and $\min(t)\ge\max(v)$. Hence the insertion of the letters
of $v$ into $u$ creates no bump, and then the insertion of the letters of $t$
into the resulting row also creates no bump. Thus~$N(uvt)$ has one row. Hence
the normal form $\operatorname{NF}(u,v,t)$ has one row, proving~{\bf iv)}.
\end{proof}

\subsubsection{Explicit generating confluences}
\label{SSS:ExplicitGeneratingConfluences}

For every critical triple $(u,v,t)$, we denote by $\mathcal X_{u,v,t}$ the
generating $3$-cell filling the corresponding confluence diagram. By
Proposition~\ref{Prop:boundedBranching}, each branch reaches the common normal
form $\operatorname{NF}(u,v,t)$ after at most two further rewriting steps.
The following tables summarize the notation and the normal forms appearing in
the diagrams. Whenever a row symbol is equal to~$\varepsilon$, the
corresponding row generator is omitted.
The proof that these diagrams are well defined, cover all possible
cases, and end at~$\operatorname{NF}(u,v,t)$ is given in
Appendix~\ref{A:ExplicitGeneratingConfluences}.

\medskip

\noindent
\textbf{Cases $1$ and $2$.}

\[
\begin{array}{|c|c|c|}
\hline
\text{Case} & \text{Notation and assumptions} & \operatorname{NF}(u,v,t) \\ \hline
1
&
\R(N(uv))=b,\ \R(N(vt))=d,\ m=u\cup v\cup t
&
r_m
\\[0.4em]
2\mathrm a
&
\R(N(uv))=b,\ \R(N(vt))=cd,\ \R(N(uc))=g,\ g\triangleleft d
&
r_g r_d
\\[0.4em]
2\mathrm b
&
\R(N(uv))=b,\ \R(N(vt))=cd,\ \R(N(uc))=g,\ g\not\triangleleft d,\ \R(N(gd))=pq
&
r_p r_q\\\hline
\end{array}
\]

\medskip

\noindent
\textbf{Case $3$.}
In this case, $\R(N(uv))=ab$ with $a\neq\varepsilon$,
$\R(N(vt))=d$, and $\R(N(bt))=ef$ with
$e\in\Rr_n\cup\{\varepsilon\}$.

\[
\begin{array}{|c|c|c|}
\hline
\text{Case} & \text{Additional assumptions and notation}
& \operatorname{NF}(u,v,t) \\ \hline
3\mathrm a1
&
e=\varepsilon,\ \R(N(bt))=f,\ u\triangleleft d
\ \text{; hence } a=u,\ f=d
&
r_u r_d
\\[0.4em]
3\mathrm a2
&
e=\varepsilon,\ \R(N(bt))=f,\ u\not\triangleleft d,\ \R(N(ud))=af
&
r_a r_f
\\[0.4em]
3\mathrm b1
&
e\neq\varepsilon,\ \R(N(ae))=g,\ u\triangleleft d
\ \text{; hence } g=u,\ f=d
&
r_u r_d
\\[0.4em]
3\mathrm b2
&
e\neq\varepsilon,\ \R(N(ae))=g,\ u\not\triangleleft d,\ \R(N(ud))=gf
&
r_g r_f\\ \hline
\end{array}
\]

\medskip

\noindent
\textbf{Case $4$.}
In this case, $\R(N(uv))=ab$, $\R(N(vt))=cd$ and~$\R(N(bt))=ef$ with~$a\neq\varepsilon$,~$c\neq\varepsilon$ and~$e\neq\varepsilon$.

\[
\begin{array}{|c|c|c|}
\hline
\text{Case} & \text{Additional assumptions and notation} & \operatorname{NF}(u,v,t) \\ \hline
4\mathrm a1
&
\R(N(uc))=g,\ g\triangleleft d,\ \R(N(ae))=g',\ g'=g,\ f=d
&
r_g r_d
\\[0.4em]
4\mathrm a2
&
\R(N(uc))=g,\ g\not\triangleleft d,\ \R(N(gd))=pf,\ \R(N(ae))=p',\ p'=p
&
r_p r_f
\\[0.4em]
4\mathrm b1
&
\R(N(uc))=ij,\ a\triangleleft e,\ j\triangleleft d,\ i=a,\ j=e,\ d=f
&
r_a r_e r_f
\\[0.4em]
4\mathrm b2
&
\R(N(uc))=ij,\ a\triangleleft e,\ j\not\triangleleft d,\ \R(N(jd))=sf,\ i=a,\ s=e
&
r_a r_e r_f
\\[0.4em]
4\mathrm b3
&
\R(N(uc))=ij,\ a\not\triangleleft e,\ j\triangleleft d,\ \R(N(ae))=pq,\ p=i,\ q=j,\ f=d
&
r_i r_j r_d
\\[0.4em]
4\mathrm b4
&
\R(N(uc))=ij,\ a\not\triangleleft e,\ j\not\triangleleft d,\ \R(N(ae))=pq,\ \R(N(jd))=sf,\ p=i,\ q=s
&
r_i r_s r_f\\ \hline
\end{array}
\]

\medskip

We now display the corresponding generating confluences. The superscript records the corresponding case and will be omitted
after the case analysis.

\medskip

\noindent
\begin{minipage}[t]{0.32\textwidth}
\centering
\textbf{Case $1$.}

\caseconfluence{
\xymatrix @R=0.75em @C=1.8em @!C {
        & r_b r_t
         \ar@2@/^/[dr]^-{\alpha_{b,t}}
        \ar@3[]!<0pt,-8pt>;[dd]!<0pt,8pt>^-*+{\mathcal X^{(1)}_{u,v,t}}
        \\
r_u r_v r_t
        \ar@2@/^/[ur]^-{\alpha_{u,v}1_{r_t}}
        \ar@2@/_/[dr]_-{1_{r_u}\alpha_{v,t}}
&& r_m
        \\
        & r_u r_d
        \ar@2@/_/[ur]_-{\alpha_{u,d}}
}
}
\end{minipage}
\hfill
\begin{minipage}[t]{0.32\textwidth}
\centering
\textbf{Case $2a$.}

\caseconfluence{
\xymatrix @R=0.75em @C=1.8em @!C {
        & r_b r_t
          \ar@2@/^/[dr]^-{\alpha_{b,t}}
        \ar@3[]!<0pt,-8pt>;[dd]!<0pt,8pt>^-*+{\mathcal X^{(2a)}_{u,v,t}}
        \\
r_u r_v r_t
        \ar@2@/^/[ur]^-{\alpha_{u,v}1_{r_t}}
        \ar@2@/_/[dr]_-{1_{r_u}\alpha_{v,t}}
& & r_g r_d
\\
        & r_u r_c r_d
          \ar@2@/_/[ur]_-{\alpha_{u,c}1_{r_d}}
}
}
\end{minipage}
\hfill
\begin{minipage}[t]{0.32\textwidth}
\centering
\textbf{Case $2b$.}

\caseconfluence{
\xymatrix @R=0.75em @C=1.8em @!C {
        & r_b r_t
         \ar@2[r]^-{\alpha_{b,t}}
        \ar@3[]!<0pt,-8pt>;[dd]!<0pt,8pt>^-*+{\mathcal X^{(2b)}_{u,v,t}}
    & r_p r_q
        \\
r_u r_v r_t
        \ar@2@/^/[ur]^-{\alpha_{u,v}1_{r_t}}
        \ar@2@/_/[dr]_-{1_{r_u}\alpha_{v,t}}
        \\
        & r_u r_c r_d
        \ar@2[r]_-{\alpha_{u,c}1_{r_d}}
        & r_g r_d
        \ar@2[uu]_-{\alpha_{g,d}}
}
}
\end{minipage}

\medskip

\noindent
\begin{minipage}[t]{0.32\textwidth}
\centering
\textbf{Case $3a1$.}

\caseconfluence{
\xymatrix @R=0.75em @C=1.8em @!C {
        & r_a r_b r_t
         \ar@2@/^/[dd]^-{1_{r_a}\alpha_{b,t}}
\ar@3[]!<-30pt,-10pt>;[dd]!<-30pt,10pt>^-*+{\mathcal X^{(3a1)}_{u,v,t}}        
        \\
r_u r_v r_t
        \ar@2@/^/[ur]^-{\alpha_{u,v}1_{r_t}}
        \ar@2@/_/[dr]_-{1_{r_u}\alpha_{v,t}}
\\
        & r_u r_d
}
}
\end{minipage}
\hfill
\begin{minipage}[t]{0.32\textwidth}
\centering
\textbf{Case $3a2$.}

\caseconfluence{
\xymatrix @R=0.75em @C=1.8em @!C {
        & r_a r_b r_t
         \ar@2@/^/[dr]^-{1_{r_a}\alpha_{b,t}}
        \ar@3[]!<0pt,-8pt>;[dd]!<0pt,8pt>^-*+{\mathcal X^{(3a2)}_{u,v,t}}
        \\
r_u r_v r_t
        \ar@2@/^/[ur]^-{\alpha_{u,v}1_{r_t}}
        \ar@2@/_/[dr]_-{1_{r_u}\alpha_{v,t}}
  && r_a r_f
\\
        & r_u r_d
        \ar@2@/_/[ur]_-{\alpha_{u,d}}
}
}
\end{minipage}
\hfill
\begin{minipage}[t]{0.32\textwidth}
\centering
\textbf{Case $3b1$.}

\caseconfluence{
\xymatrix @R=0.75em @C=1.8em @!C {
        & r_a r_b r_t
        \ar@2[r]^-{1_{r_a}\alpha_{b,t}}
        \ar@3[]!<0pt,-8pt>;[dd]!<0pt,8pt>^-*+{\mathcal X^{(3b1)}_{u,v,t}}
        & r_a r_e r_f
          \ar@2@/^/[ddl]^-{\alpha_{a,e}1_{r_f}}
        \\
r_u r_v r_t
        \ar@2@/^/[ur]^-{\alpha_{u,v}1_{r_t}}
        \ar@2@/_/[dr]_-{1_{r_u}\alpha_{v,t}}
        \\
        & r_u r_d
}
}
\end{minipage}
\medskip

\noindent
\begin{minipage}[t]{0.49\textwidth}
\centering
\textbf{Case $3b2$.}

\caseconfluence{
\xymatrix @R=0.75em @C=1.8em @!C {
        & r_a r_b r_t
        \ar@2[r]^-{1_{r_a}\alpha_{b,t}}
        \ar@3[]!<0pt,-8pt>;[dd]!<0pt,8pt>
          ^-*+{\mathcal X^{(3b2)}_{u,v,t}}
        & r_a r_e r_f
        \ar@2[dd]^-{\alpha_{a,e}1_{r_f}}
        \\
r_u r_v r_t
        \ar@2@/^/[ur]^-{\alpha_{u,v}1_{r_t}}
        \ar@2@/_/[dr]_-{1_{r_u}\alpha_{v,t}}
        \\
        & r_u r_d
        \ar@2[r]_-{\alpha_{u,d}}
        & r_g r_f
}
}
\end{minipage}
\hfill
\begin{minipage}[t]{0.49\textwidth}
\centering
\textbf{Case $4a1$.}

\caseconfluence{
\xymatrix @R=0.75em @C=1.8em @!C {
        & r_a r_b r_t
        \ar@2[r]^-{1_{r_a}\alpha_{b,t}}
        \ar@3[]!<0pt,-8pt>;[dd]!<0pt,8pt>
          ^-*+{\mathcal X^{(4a1)}_{u,v,t}}
        & r_a r_e r_f
        \ar@2[dd]^-{\alpha_{a,e}1_{r_f}}
        \\
r_u r_v r_t
        \ar@2@/^/[ur]^-{\alpha_{u,v}1_{r_t}}
        \ar@2@/_/[dr]_-{1_{r_u}\alpha_{v,t}}
        \\
        & r_u r_c r_d
        \ar@2[r]_-{\alpha_{u,c}1_{r_d}}
        & r_g r_d
}
}
\end{minipage}

\medskip

\noindent
\begin{minipage}[t]{0.49\textwidth}
\centering
\textbf{Case $4a2$.}

\caseconfluence{
\xymatrix @R=0.75em @C=1.8em @!C {
        & r_a r_b r_t
        \ar@2[r]^-{1_{r_a}\alpha_{b,t}}
        \ar@3[]!<16pt,-8pt>;[dd]!<16pt,8pt>
          ^-*+{\mathcal X^{(4a2)}_{u,v,t}}
        & r_a r_e r_f
        \ar@2@/^/[dr]^-{\alpha_{a,e}1_{r_f}}
        \\
r_u r_v r_t
        \ar@2@/^/[ur]^-{\alpha_{u,v}1_{r_t}}
        \ar@2@/_/[dr]_-{1_{r_u}\alpha_{v,t}}
        &&& r_p r_f
        \\
        & r_u r_c r_d
        \ar@2[r]_-{\alpha_{u,c}1_{r_d}}
        & r_g r_d
        \ar@2@/_/[ur]_-{\alpha_{g,d}}
}
}
\end{minipage}
\hfill
\begin{minipage}[t]{0.49\textwidth}
\centering
\textbf{Case $4b1$.}

\caseconfluence{
\xymatrix @R=0.75em @C=1.8em @!C {
        & r_a r_b r_t
        \ar@2@/^/[dr]^-{1_{r_a}\alpha_{b,t}}
        \ar@3[]!<0pt,-8pt>;[dd]!<0pt,8pt>
          ^-*+{\mathcal X^{(4b1)}_{u,v,t}}
        \\
r_u r_v r_t
        \ar@2@/^/[ur]^-{\alpha_{u,v}1_{r_t}}
        \ar@2@/_/[dr]_-{1_{r_u}\alpha_{v,t}}
        && r_a r_e r_f
        \\
        & r_u r_c r_d
        \ar@2@/_/[ur]_-{\alpha_{u,c}1_{r_d}}
}
}
\end{minipage}

\medskip

\noindent
\begin{minipage}[t]{0.49\textwidth}
\centering
\textbf{Case $4b2$.}

\caseconfluence{
\xymatrix @R=0.75em @C=1.8em @!C {
        & r_a r_b r_t
        \ar@2[r]^-{1_{r_a}\alpha_{b,t}}
        \ar@3[]!<0pt,-8pt>;[dd]!<0pt,8pt>
          ^-*+{\mathcal X^{(4b2)}_{u,v,t}}
        & r_a r_e r_f
        \\
r_u r_v r_t
        \ar@2@/^/[ur]^-{\alpha_{u,v}1_{r_t}}
        \ar@2@/_/[dr]_-{1_{r_u}\alpha_{v,t}}
        \\
        & r_u r_c r_d
        \ar@2[r]_-{\alpha_{u,c}1_{r_d}}
        & r_a r_j r_d
        \ar@2[uu]_-{1_{r_a}\alpha_{j,d}}
}
}
\end{minipage}
\hfill
\begin{minipage}[t]{0.49\textwidth}
\centering
\textbf{Case $4b3$.}

\caseconfluence{
\xymatrix @R=0.75em @C=1.8em @!C {
        & r_a r_b r_t
        \ar@2[r]^-{1_{r_a}\alpha_{b,t}}
        \ar@3[]!<16pt,-8pt>;[dd]!<16pt,8pt>
          ^-*+{\mathcal X^{(4b3)}_{u,v,t}}
        & r_a r_e r_f
        \ar@2@/^/[dd]^-{\alpha_{a,e}1_{r_f}}
        \\
r_u r_v r_t
        \ar@2@/^/[ur]^-{\alpha_{u,v}1_{r_t}}
        \ar@2@/_/[dr]_-{1_{r_u}\alpha_{v,t}}
        \\
        & r_u r_c r_d
        \ar@2[r]_-{\alpha_{u,c}1_{r_d}}
        & r_i r_j r_d
}
}
\end{minipage}

\medskip

\noindent
\begin{minipage}[t]{\textwidth}
\centering
\textbf{Case $4b4$.}

\caseconfluence{
\xymatrix @R=0.75em @C=1.8em @!C {
        & r_a r_b r_t
        \ar@2[r]^-{1_{r_a}\alpha_{b,t}}
        \ar@3[]!<16pt,-8pt>;[dd]!<16pt,8pt>
          ^-*+{\mathcal X^{(4b4)}_{u,v,t}}
        & r_a r_e r_f
        \ar@2@/^/[dr]^-{\alpha_{a,e}1_{r_f}}
        \\
r_u r_v r_t
        \ar@2@/^/[ur]^-{\alpha_{u,v}1_{r_t}}
        \ar@2@/_/[dr]_-{1_{r_u}\alpha_{v,t}}
        &&& r_i r_s r_f
        \\
        & r_u r_c r_d
        \ar@2[r]_-{\alpha_{u,c}1_{r_d}}
        & r_i r_j r_d
        \ar@2@/_/[ur]_-{1_{r_i}\alpha_{j,d}}
}
}
\end{minipage}

\subsubsection{The coherent row presentation}

We denote by~$\Sigma^{\mathrm{row,coh}}(n)=\bigl(\Sigma^{\mathrm{row}}(n),\Sigma^{\mathrm{row}}_3(n)\bigr)$ the $(3,1)$-polygraph obtained from $\Sigma^{\mathrm{row}}(n)$ by adjoining,
for each critical triple $(u,v,t)\in\Rr_n^3$, one generating $3$-cell
$\mathcal{X}_{u,v,t}$ between the two confluent rewriting paths constructed
in the preceding case analysis.

\begin{theorem}
\label{T:CoherentPresentation}
The $(3,1)$-polygraph $\Sigma^{\mathrm{row,coh}}(n)$ is a finite coherent
presentation of the stylic monoid~$\Styl_n$.
\end{theorem}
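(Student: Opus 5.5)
The proof is a direct application of Squier's coherence theorem to the finite convergent $2$-polygraph $\Sigma^{\mathrm{row}}(n)$. By Proposition~\ref{P:RowPresentationConvergent}, $\Sigma^{\mathrm{row}}(n)$ is finite, convergent, and presents $\Styl_n$. Squier's theorem then says that adjoining, for each critical branching $(f,g)$, one generating $3$-cell $f\star_1 f'\tfl g\star_1 g'$ yields a homotopy basis of $\Sigma^{\mathrm{row}}_2(n)^\top$. Here $f',g'$ are any rewriting paths of $\Sigma^{\mathrm{row}}_2(n)^\ast$ from the targets of $f$ and $g$ to a common $1$-cell. It therefore remains to check two things: that the $3$-cells $\mathcal X_{u,v,t}$ are exactly such generating confluences, one for each critical branching, and that there are finitely many of them.

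First, by Lemma~\ref{L:CriticalBranchingsRow}, the critical branchings are exactly the pairs $(\alpha_{u,v}1_{r_t},1_{r_u}\alpha_{v,t})$ indexed by the critical triples $(u,v,t)$, that is, $u\not\triangleleft v$ and $v\not\triangleleft t$. Second, for each such triple we need the two branches displayed in the corresponding case diagram to be genuine rewriting paths in $\Sigma^{\mathrm{row}}_2(n)^\ast$ ending at the common word $\operatorname{NF}(u,v,t)$. For this I would rely on the proof of Proposition~\ref{Prop:boundedBranching}, which constructs both branches step by step. The applicability of each step comes from Lemma~\ref{L:UsefulReduciblePairsCriticalTriple} together with the case conditions ($\not\triangleleft$). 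Each branch terminates in an irreducible word, since that word is the row-generator reading of an $N$-tableau. Convergence then forces both irreducible words to equal the unique normal form $\operatorname{NF}(u,v,t)=\Rr(N(uvt))$.

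The delicate point is matching these abstract branches with the specific labels in the tables. The tables assert identifications such as $a=u$, $f=d$, $p'=p$, $i=a$, $s=e$, and they assert that the listed cases exhaust all critical triples. The exhaustiveness is a case split on whether $a$, $c$, $e$ are empty and whether the relevant $\triangleleft$ relations hold. Each identification follows from uniqueness of normal forms: two irreducible words representing the same element coincide, and rows are compared position by position. This verification is the main obstacle, and the paper delegates it to Appendix~\ref{A:ExplicitGeneratingConfluences}, which I would cite. Even without those explicit labels, coherence itself does not depend on them: any choice of confluent completions of each critical branching works in Squier's theorem.

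Finiteness is immediate. $\Sigma^{\mathrm{row}}_1(n)$ has $2^n-1$ generators. The $2$-cells are indexed by a subset of $\Rr_n^2$, and the $3$-cells by a subset of $\Rr_n^3$, so there are at most $(2^n-1)^3$ of them. Finally, the underlying $2$-polygraph presents $\Styl_n$ by Proposition~\ref{P:RowPresentationConvergent}, so $\Sigma^{\mathrm{row,coh}}(n)$ is a finite coherent presentation of $\Styl_n$.
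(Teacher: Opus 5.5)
Your proposal is correct and follows the paper's own proof. Both apply Squier's coherence theorem to the finite convergent row presentation of Proposition~\ref{P:RowPresentationConvergent}, use Lemma~\ref{L:CriticalBranchingsRow} to identify the critical branchings with the critical triples, take one confluence per branching from the case analysis verified in Appendix~\ref{A:ExplicitGeneratingConfluences}, and get finiteness from the finiteness of $\Rr_n$. Your added remark that coherence does not depend on the particular confluent completions is accurate, and it is implicit in the paper's phrase ``chosen confluent rewriting paths''.
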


\begin{proof}
By Proposition~\ref{P:RowPresentationConvergent}, the $2$-polygraph~$\Sigma^{\mathrm{row}}(n)$ is finite and convergent, and it presents~$\Styl_n$. 
Since~$\Rr_n$ is finite, there are only finitely many critical
triples $(u,v,t)\in\Rr_n^3$. 
By Lemma~\ref{L:CriticalBranchingsRow}, the
critical branchings of~$\Sigma^{\mathrm{row}}(n)$ are exactly those indexed by
critical triples. The preceding case analysis provides, for each such critical
branching, a generating $3$-cell~$\mathcal X_{u,v,t}$ between chosen confluent
rewriting paths. 
Hence, by Squier's coherence theorem,
$\Sigma^{\mathrm{row,coh}}(n)$ is a finite coherent presentation of~$\Styl_n$.
\end{proof}

\subsubsection{Number of cells in the coherent presentation}
The following proposition gives formulas for the
numbers of generating cells in the coherent row presentation~$\Sigma^{\mathrm{row,coh}}(n)$.

\begin{proposition}
\label{Prop:CountingCells}
For every $n\geq1$, the numbers of generating cells of
$\Sigma^{\mathrm{row,coh}}(n)$ are
\begin{align}
\bigl|\Sigma^{\mathrm{row}}_1(n)\bigr|
&=
2^n-1,
\label{eq:CountRowCellsOne}
\\
\bigl|\Sigma^{\mathrm{row}}_2(n)\bigr|
&=
\frac{2^{2n+1}-2^{n+1}-3^n+1}{2},
\label{eq:CountRowCellsTwo}
\\
\bigl|\Sigma^{\mathrm{row}}_3(n)\bigr|
&=
\frac{
6\cdot8^n-6^{n+1}-5\cdot4^n+3^{n+1}+3\cdot2^n-1
}{6}.
\label{eq:CountRowCellsThree}
\end{align}
\end{proposition}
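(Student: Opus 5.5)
The plan is a direct count. Each of the three formulas reduces to counting pairs or chains of nonempty subsets of $[n]$, because every row is identified with its support. Formula~\eqref{eq:CountRowCellsOne} is immediate from $|\Rr_n|=2^n-1$. For the other two I would introduce
\[
D_n:=\bigl|\{(u,v)\in\Rr_n^2\mid u\triangleleft v\}\bigr|
\]
and the number $C_n$ of chains $u\triangleleft v\triangleleft t$. Then
\[
|\Sigma^{\mathrm{row}}_2(n)|=(2^n-1)^2-D_n,
\]
since $\Sigma^{\mathrm{row}}_2(n)$ is indexed by pairs with $u\not\triangleleft v$. By Lemma~\ref{L:CriticalBranchingsRow} and Theorem~\ref{T:CoherentPresentation}, $\Sigma^{\mathrm{row}}_3(n)$ is in bijection with critical triples. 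Inclusion--exclusion on the two conditions $u\triangleleft v$ and $v\triangleleft t$ then gives
\[
|\Sigma^{\mathrm{row}}_3(n)|=(2^n-1)^3-2(2^n-1)D_n+C_n .
\]

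\textbf{Computing $D_n$.} Write $U=\supp(u)$ and $V=\supp(v)$. The condition $u\triangleleft v$ says $U\subseteq V$ and $\min V<\min U$. Equivalently, the minimum $m$ of $V$ lies in $V\setminus U$. I would sum over $m\in[n]$:
\begin{itemize}
\item letters smaller than $m$ lie outside $V$;
\item each of the $n-m$ letters larger than $m$ is in $U$, in $V\setminus U$, or outside $V$;
\item at least one of them is in $U$, because $U\neq\varnothing$.
\end{itemize}
This gives
\[
D_n=\sum_{m=1}^n\bigl(3^{n-m}-2^{n-m}\bigr)=\frac{3^n-1}{2}-(2^n-1).
\]
Substituting into $(2^n-1)^2-D_n$ yields $4^n-2^n-\frac{3^n-1}{2}$, which is~\eqref{eq:CountRowCellsTwo}.

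\textbf{Computing $C_n$.} This is the step I expect to need the most care. Write $U\subseteq V\subseteq T$ with $\min T<\min V<\min U$. Set $m_1=\min T$, which lies in $T\setminus V$, and $m_2=\min V$, which lies in $V\setminus U$, so $m_1<m_2$. Then:
\begin{itemize}
\item letters below $m_1$ are outside $T$;
\item each letter strictly between $m_1$ and $m_2$ is either in $T\setminus V$ or outside $T$, giving $2^{m_2-m_1-1}$ choices;
\item each letter above $m_2$ takes one of four states ($U$, $V\setminus U$, $T\setminus V$, outside $T$), with $U$ nonempty, giving $4^{n-m_2}-3^{n-m_2}$ choices.
\end{itemize}
Hence
\[
C_n=\sum_{1\le m_1<m_2\le n}2^{m_2-m_1-1}\bigl(4^{n-m_2}-3^{n-m_2}\bigr).
\]
Summing first over $m_1$ replaces $2^{m_2-m_1-1}$ by $2^{m_2-1}-1$. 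This leaves a single sum of terms of the forms $2^{m}4^{n-m}$, $2^m3^{n-m}$, $4^{n-m}$ and $3^{n-m}$, each of which is a geometric series.

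The closed form of $C_n$ is a combination of $8^n$, $6^n$, $4^n$, $3^n$, $2^n$ and $1$. Inserting it into the inclusion--exclusion identity should reproduce~\eqref{eq:CountRowCellsThree}. The main obstacle is purely bookkeeping: the nested geometric sums and the final simplification. I would guard against sign and index slips by checking the formulas at $n=1$ and $n=2$, where the counts can be enumerated by hand. For $n=1$ all pairs and triples are reducible, so the counts are $1$, $1$ and $1$; both formulas give $1$ at $n=1$.
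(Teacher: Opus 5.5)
Your proof is correct and reaches all three formulas by a different decomposition from the paper's.

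\textbf{The paper's route.} It fixes the middle row $v$, by its length $k$ and minimum $m$, and uses that, for fixed $v$, the conditions on $u$ and on $t$ are independent. It then sums the product $\bigl(2^n-2^{k-1}\bigr)\bigl((2^n-1)-(2^{m-1}-1)2^{n-k-m+1}\bigr)$ over all $v$. This requires binomial sums, including one with weights $2^{-j}$. For the $2$-cells it likewise fixes $v$ and counts the $2^{k-1}-1$ rows $u\triangleleft v$.

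\textbf{Your route.} You apply inclusion--exclusion to the two reducibility conditions. This isolates the single genuinely new quantity, the number $C_n$ of chains $u\triangleleft v\triangleleft t$. You count both $D_n$ and $C_n$ by locating the minima and assigning each remaining letter a state, which avoids binomial sums entirely.

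\textbf{The omitted algebra.} Carrying out your geometric sums gives
\[
C_n=\frac{4^n+3\cdot2^n-3^{n+1}-1}{6},
\qquad
(2^n-1)^3-2(2^n-1)D_n=8^n-6^n-4^n+3^n .
\]
Their sum is exactly~\eqref{eq:CountRowCellsThree}. Contrary to your remark, $C_n$ itself contains no $8^n$ or $6^n$ terms. Those come only from the first two inclusion--exclusion terms.

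\textbf{Trade-off.} Your version makes the shape of the final formula transparent. The paper's per-middle-row factorization $\sum_v(\#u)(\#t)$ is reused almost verbatim in Proposition~\ref{P:CountSplittableTriplesFinal}. There the condition on $u$ becomes $\ell(u)\geq2$ and $\partial u\not\triangleleft v$, which your chain count does not directly cover.
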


\begin{proof}
A generating $1$-cell is indexed by a nonempty row on $[n]$, equivalently by a nonempty
subset of~$[n]$. This proves~\eqref{eq:CountRowCellsOne}.

A generating $2$-cell is a cell $\alpha_{u,v}$ for an ordered pair $(u,v)\in \Rr_n^2$
such that $u\not\triangleleft v$. Therefore
\[
|\Sigma^{\mathrm{row}}_2(n)|
=
(2^n-1)^2-\#\{(u,v)\in \Rr_n^2\mid u\triangleleft v\}.
\]
For a fixed row $v$ of length $k$, the rows $u$ such that $u\triangleleft v$ are exactly
the nonempty subsets of $v\setminus\{\min(v)\}$, so there are $2^{k-1}-1$ of them.
Summing over all rows $v$ gives
\[
\#\{(u,v)\in \Rr_n^2\mid u\triangleleft v\}
=
\sum_{k=1}^n \binom{n}{k}(2^{k-1}-1)
=
\frac{3^n-2^{n+1}+1}{2}.
\]
Substituting this value proves~\eqref{eq:CountRowCellsTwo}.

By construction and by Lemma~\ref{L:CriticalBranchingsRow}, the generating
$3$-cells are indexed by triples $(u,v,t)\in \Rr_n^3$ satisfying $u\not\triangleleft v$ and $v\not\triangleleft t$.
Fix a middle row $v$ of length $k$ and minimum $m$. Then the number of rows $u$ such that
$u\not\triangleleft v$ is~$2^n-2^{k-1}$,
whereas the number of rows $t$ such that $v\not\triangleleft t$ is
\[
(2^n-1)-(2^{m-1}-1)2^{\,n-k-m+1}.
\]
Indeed, the condition $v\triangleleft t$ means that $\supp(v)\subseteq \supp(t)$ and
$\min(t)<\min(v)=m$. Since $v$ has length~$k$ and minimum $m$, the row $t$
is obtained by adjoining to $v$ a nonempty subset of $\{1,\dots,m-1\}$ and
an arbitrary subset of the $n-k-m+1$ elements of $\{m+1,\dots,n\}\setminus v$.
Hence the number of such rows $t$ is~$\displaystyle (2^{m-1}-1)2^{\,n-k-m+1}$.
Since the number of rows~$v$ of length~$k$ and~$\min(v)=m$ is~$\displaystyle \binom{n-m}{k-1}$,
it follows that
\[
|\Sigma^{\mathrm{row}}_3(n)|
=
\sum_{m=1}^n \sum_{k=1}^{n-m+1}
\binom{n-m}{k-1}
\bigl(2^n-2^{k-1}\bigr)
\Bigl((2^n-1)-(2^{m-1}-1)2^{\,n-k-m+1}\Bigr).
\]
Setting~$r=n-m$ and~$j=k-1$, this becomes
\[
|\Sigma^{\mathrm{row}}_3(n)|
=
\sum_{r=0}^{n-1}\sum_{j=0}^r
\binom{r}{j}
\bigl(2^n-2^j\bigr)
\Bigl((2^n-1)-(2^{n-r-1}-1)2^{\,r-j}\Bigr).
\]
Using the binomial identities
\[
\sum_{j=0}^r \binom{r}{j}=2^r,
\qquad
\sum_{j=0}^r \binom{r}{j}2^j=3^r,
\qquad
\sum_{j=0}^r \binom{r}{j}2^{-j}=\left(\frac32\right)^r,
\]
one obtains
\[
|\Sigma^{\mathrm{row}}_3(n)|
=
\sum_{r=0}^{n-1}
\left(
2^{2n+r}-2^{n+r-1}-2^{2r}+3^r-2^{2n-r-1}3^r
\right).
\]
Summing these geometric series gives~\eqref{eq:CountRowCellsThree}.
\end{proof}

\begin{example}
For small values of $n$, the numbers of generating cells are given in the following table.
\begin{center}
\begin{tabular}{|c|c|c|c|}
\hline
$n$ & $|\Sigma^{\mathrm{row}}_1(n)|$ & $|\Sigma^{\mathrm{row}}_2(n)|$ & $|\Sigma^{\mathrm{row}}_3(n)|$ \\
\hline
$1$ & $1$   & $1$    & $1$ \\
$2$ & $3$   & $8$    & $21$ \\
$3$ & $7$   & $43$   & $260$ \\
$4$ & $15$  & $200$  & $2635$ \\
$5$ & $31$  & $871$  & $24276$ \\
$6$ & $63$  & $3668$ & $212471$ \\
\hline
\end{tabular}
\end{center}
\end{example}

\section{A reduced coherent presentation of the stylic monoid}
\label{S:ReducedCoherentPresentationStylicMonoid}

Starting from the coherent row presentation constructed in
Section~\ref{S:CoherentPresentationStylicMonoid}, we perform two
homotopical reductions. The first removes the splittable generating
confluences, and the second eliminates a deterministic acyclic family of
row $2$-cells together with corresponding generating $3$-cells. We then
transport the reduced presentation to the pre-row presentation and eliminate
the non-letter row generators, obtaining a finite coherent presentation on
the letter generators.

\subsection{Homotopical reduction procedure}
\label{SS:HomotopicalReductionProcedure}

We recall the homotopical reduction procedure
from~\cite{GaussentGuiraudMalbos15}. Let $\Sigma$ be a
$(3,1)$-polygraph. A \emph{$3$-sphere} of $\tck{\Sigma}_3$ is a pair
$(f,g)$ of parallel $3$-cells of $\tck{\Sigma}_3$, that is,
$s_2(f)=s_2(g)$ and~$t_2(f)=t_2(g)$.
A \emph{collapsible part} of $\Sigma$ is a triple
\[
\Gamma=(\Gamma_2,\Gamma_3,\Gamma_4),
\]
where $\Gamma_2$ is a family of generating $2$-cells of $\Sigma$,
$\Gamma_3$ is a family of generating $3$-cells of $\Sigma$, and
$\Gamma_4$ is a family of $3$-spheres of $\tck{\Sigma}_3$, regarded as
collapsible $4$-cells.
For $k\in\{2,3,4\}$, a cell $\gamma\in\Gamma_k$ is
\emph{collapsible} if $t_{k-1}(\gamma)$ is a generating
$(k-1)$-cell of $\Sigma$ and $s_{k-1}(\gamma)$ belongs to the free
$(3,1)$-category generated by
$\Sigma\setminus\{t_{k-1}(\gamma)\}$.
The target $t_{k-1}(\gamma)$ is then called the
\emph{redundant cell} associated with~$\gamma$.
When $k=4$, an element $\gamma\in\Gamma_4$ is a $3$-sphere~$
\gamma
=
\bigl(
s_3(\gamma),
t_3(\gamma)
\bigr)$.
In this case, $t_3(\gamma)$ is a generating $3$-cell of $\Sigma$, while
$s_3(\gamma)$ does not contain this generating cell.
These data satisfy the following conditions:
\begin{enumerate}[\bf i)]
\item every $\gamma\in\Gamma_k$, for $k\in\{2,3,4\}$, is
collapsible, possibly after a Nielsen transformation as recalled below;
\item no element of $\Gamma_k$ is the redundant target of an element
of $\Gamma_{k+1}$, for $k=2,3$;
\item there exist well-founded orders on the generating $1$-cells,
$2$-cells, and $3$-cells of $\Sigma$ such that, for every
$\gamma\in\Gamma_k$, the target $t_{k-1}(\gamma)$ is strictly greater
than every generating $(k-1)$-cell occurring in~$s_{k-1}(\gamma)$.
\end{enumerate}

The \emph{homotopical reduction associated with~$\Gamma$} is defined on
generating cells by
\[
R_\Gamma(x)
=
\begin{cases}
R_\Gamma\bigl(s_{k-1}(\gamma)\bigr),
&
\begin{array}{l}
x=t_{k-1}(\gamma)
\text{ for some }\gamma\in\Gamma_k,
k\in\{2,3,4\},
\end{array}
\\
1_{R_\Gamma(s_{k-1}(\gamma))},
&
\begin{array}{l}
x=\gamma\in\Gamma_k
\text{ for some }k\in\{2,3\},
\end{array}
\\
x,
&
\text{otherwise}.
\end{cases}
\]
It is extended compatibly to identities, inverses, and compositions.
The well-founded orders above ensure that this recursive definition is
well defined. It gives a Tietze transformation
\[
R_\Gamma\colon
\tck{\Sigma}_3
\longrightarrow
\tck{R_\Gamma(\Sigma)}_3.
\]
The polygraph $R_\Gamma(\Sigma)$ is obtained from $\Sigma$ by removing
the cells in $\Gamma_2\cup\Gamma_3$ and the redundant targets~$t_{k-1}(\gamma)$ associated with the cells
$\gamma\in\Gamma_k$, for $k\in\{2,3,4\}$. An element of $\Gamma_4$
is not itself a generator of $\Sigma$: it is the $3$-sphere that
justifies the elimination of its target generating $3$-cell.
The source and target maps of the remaining generators are replaced by
their compositions with $R_\Gamma$. Thus a surviving generator is kept,
but its boundary is reduced whenever it contains an eliminated cell.
The resulting polygraph is Tietze equivalent to $\Sigma$. We refer
to~\cite[Section~2.3.1]{GaussentGuiraudMalbos15} for the detailed
construction.

The first reduction uses critical triple branchings to construct
collapsible $3$-spheres. A \emph{local triple branching} is a triple
$(f,g,h)$ of rewriting steps with a common source. It is
\emph{aspherical} if two of the three steps coincide, and
\emph{Peiffer} if at least one of the three steps forms a Peiffer
branching with each of the other two. All other local triple branchings
are called \emph{overlapping}.
Local triple branchings are ordered by inclusion of their sources: a
branching is below another if the latter is obtained from it by common
whiskering. A minimal overlapping triple branching is called
\emph{critical}. For a coherent presentation with convergent underlying
$2$-polygraph, confluence and coherence complete every critical triple
branching to a $3$-sphere
\cite[Section~2.3.2]{GaussentGuiraudMalbos15}. 

For later use, let~$f\colon u\dfl u'$ and~$g\colon v\dfl v'$ be
rewriting steps acting on disjoint factors. The two paths~$
(f1_v)\star_1(1_{u'}g)$ and~$
(1_ug)\star_1(f1_{v'})$ are equal by the exchange law. We call the corresponding identity
$3$-cell the \emph{Peiffer comparison} between the two disjoint steps.

Finally, we use Nielsen transformations in dimension~$3$. Let~$A\colon f\Rrightarrow g$
be a generating $3$-cell, and suppose that, for some
$\varepsilon\in\{+1,-1\}$, a $3$-cell expression~$B$ has the form~$B
=
1_h\star_1A^\varepsilon\star_1 1_k$,
where $h$ and~$k$ are composable $2$-cells of $\tck{\Sigma}_3$.
Since these $2$-cells are invertible, $A^\varepsilon$ can be recovered
from~$B$ by whiskering with $h^{-1}$ and~$k^{-1}$. Then $A$ may be
replaced by a generating $3$-cell having the boundary of~$B$.
Indeed, one first adjoins a new generating $3$-cell
$\widetilde A$ parallel to~$B$, together with the collapsible
$3$-sphere~$(B,\widetilde A)$.
Using the inverse $2$-cells, $A^\varepsilon$ can then be expressed in
terms of~$\widetilde A$, so that $A$ becomes redundant and may be
eliminated. This composite of elementary coherent Tietze
transformations is called a
\emph{Nielsen transformation in dimension~$3$}
\cite[Section~2.1.4]{GaussentGuiraudMalbos15}.

\subsection{First homotopical reduction: splittable critical confluences}
\label{SS:FirstReductionSplittable}

We now construct a first homotopical reduction of the coherent row
presentation.  A splittable critical triple is
expanded to a critical triple branching on four adjacent row generators.
Five faces are filled by Squier confluences and Peiffer comparisons, while
the sixth face is a whiskered copy of the generating confluence to be
eliminated.

For the row presentation $\Sigma^{\mathrm{row}}(n)$, the leftmost
normalization paths satisfy the prefix factorization
\[
\npath{uv}
=
(\npath{u}1_v)\star_1\npath{\widehat u v}
\]
for all row-generator words $u$ and~$v$. Indeed, the leftmost strategy first
normalizes the prefix~$u$ and then continues from~$\widehat u v$.
Recall that, for every critical triple $(u,v,t)$, we have~$\operatorname{NF}(u,v,t)
=
\Rr(N(uvt))
=
\widehat{r_ur_vr_t}$.
The corresponding generating Squier confluence is
\[
\mathcal X_{u,v,t}\colon
(\alpha_{u,v}1_{r_t})\star_1
\npath{\Rr(N(uv))r_t}
\Rrightarrow
(1_{r_u}\alpha_{v,t})\star_1
\npath{r_u\Rr(N(vt))}.
\]
The case analysis of
Subsubsection~\ref{SSS:ExplicitGeneratingConfluences} shows that these are
exactly the generating confluences fixed there. 
For a critical triple $(u,v,t)$ and a row-generator word $w$, set
\[
\widehat{\mathcal X}^{\,w}_{u,v,t}
:=
(\mathcal X_{u,v,t}1_w)\star_1
1_{\npath{\operatorname{NF}(u,v,t)w}}.
\]
Thus $\widehat{\mathcal X}^{\,w}_{u,v,t}$ is obtained by placing
$\mathcal X_{u,v,t}$ in the right context $w$ and continuing both boundary
paths by the same leftmost normalization path. When $w$ is empty, we write
$\widehat{\mathcal X}_{u,v,t}=\mathcal X_{u,v,t}$. We use the inverse when
the opposite orientation is required.

\subsubsection{Splittable critical triples}

Consider
\[
\mathfrak C_n
=
\{(u,v,t)\in\Rr_n^3
\mid
u\not\triangleleft v,\ v\not\triangleleft t\}
\]
the set of critical triples. If $u=x_1x_2\cdots x_k$, with $k\ge2$, is a
row, set $\partial u:=x_2\cdots x_k$. A critical triple~$(u,v,t)$ is called \emph{splittable} if~$\ell(u)\ge2$ and~$\partial u\not\triangleleft v$.
We denote the set of splittable critical triples by~$\mathfrak S_n$.
A critical triple which is not splittable is called
\emph{nonsplittable}. The same terminology is used for its generating
confluence. A generating confluence~$\mathcal X_{u,v,t}$ is called
\emph{letter-first} if~$\ell(u)=1$.

\begin{lemma}
\label{L:SplittableAutomaticallyCriticalFinal}
Let $u=x\,\partial u$ be a row of length at least two and let $v$ be a row.
If $\partial u\not\triangleleft v$, then $u\not\triangleleft v$.
\end{lemma}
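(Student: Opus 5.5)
Argue by contraposition: assume $u\triangleleft v$ and deduce $\partial u\triangleleft v$. By definition, $u\triangleleft v$ gives $\supp(u)\subseteq\supp(v)$ and $\min(v)<\min(u)$. Two facts must then be checked for $\partial u$.

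First, inclusion of supports. Since $\supp(\partial u)=\supp(u)\setminus\{x\}\subseteq\supp(u)$, we get $\supp(\partial u)\subseteq\supp(v)$ at once.

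Second, the strict inequality on minima. The row $u=x\,\partial u$ is strictly increasing and has length at least two, so $\partial u$ is a nonempty row. Its minimum is the second letter of $u$, hence $\min(\partial u)>x=\min(u)$. Combining this with $\min(v)<\min(u)$ gives $\min(v)<\min(\partial u)$.

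Together these give $\partial u\triangleleft v$, contradicting the hypothesis, so $u\not\triangleleft v$. The argument mirrors the monotonicity argument of Lemma~\ref{L:TriangleleftMonotonicity}, with $\partial u$ playing the role of the smaller row. The only point needing care is that $\partial u$ is nonempty, so that $\min(\partial u)$ and the relation $\triangleleft$ make sense; this is exactly where the hypothesis $\ell(u)\ge 2$ is used. There is no real obstacle.
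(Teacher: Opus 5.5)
Your proof is correct and follows essentially the same route as the paper: assume $u\triangleleft v$, pass the support inclusion down to $\partial u$, and chain $\min(v)<x<\min(\partial u)$ to obtain $\partial u\triangleleft v$, contradicting the hypothesis. Your remark that $\ell(u)\geq 2$ is what makes $\partial u$ nonempty, so that $\min(\partial u)$ is defined, is a point the paper leaves implicit.
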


\begin{proof}
If $u\triangleleft v$, then $\supp(u)\subseteq\supp(v)$ and
$\min(v)<\min(u)=x$. Hence $\supp(\partial u)\subseteq\supp(v)$ and
$\min(v)<x<\min(\partial u)$, so $\partial u\triangleleft v$, a
contradiction.
\end{proof}

Let $\tau=(u,v,t)\in\mathfrak S_n$ and write $u=x\,\partial u$. Then~$W_\tau:=r_xr_{\partial u}r_vr_t$
admits the three rewriting steps~$
A_\tau:=\alpha_{x,\partial u}1_{r_v}1_{r_t}$,~$
B_\tau:=1_{r_x}\alpha_{\partial u,v}1_{r_t}$ and~$
C_\tau:=1_{r_x}1_{r_{\partial u}}\alpha_{v,t}$.
Indeed, $\alpha_{x,\partial u}$ exists because
$x\notin\supp(\partial u)$, and hence
$x\not\triangleleft\partial u$.
The $2$-cell $\alpha_{\partial u,v}$ exists by splittability, while
$\alpha_{v,t}$ exists by criticality.
Moreover, $(A_\tau,B_\tau,C_\tau)$ is a critical triple branching.
Indeed, $A_\tau$ overlaps with~$B_\tau$, $B_\tau$ overlaps
with~$C_\tau$, and~$(A_\tau,C_\tau)$ is a Peiffer branching. Any source
containing all three steps must contain the four consecutive row generators
$r_xr_{\partial u}r_vr_t$. Hence the triple branching has no proper
overlapping subbranching and is minimal.

\begin{remark}
The splittability condition is essential. In general,
$u\not\triangleleft v$ does not imply
$\partial u\not\triangleleft v$, so splitting $u=x\,\partial u$ need not
produce the middle rewriting step $\alpha_{\partial u,v}$.
\end{remark}

\subsubsection{The two transport faces}

Fix $\tau=(u,v,t)$ in~$\mathfrak S_n$ and write
$u=x\,\partial u$. Set
\[
\mathbf M:=\Rr(N(uv)),
\qquad
\mathbf U:=\Rr(N(\partial u\,v)),
\qquad
\mathbf V:=\Rr(N(vt)),
\]
and
\[
\mathbf Z:=\operatorname{NF}(\partial u,v,t),
\qquad
\mathbf N_\tau:=\operatorname{NF}(u,v,t).
\]
Replacing a factor by its row normal form does not change the represented
$N$-tableau. Hence, by convergence, the following leftmost normalization
paths are well defined:
\[
\begin{alignedat}{3}
p_\tau
&:=
\npath{r_x\mathbf U}
\colon r_x\mathbf U\dfl^*\mathbf M,
\qquad&
f_\tau^L
&:=
\npath{\mathbf U r_t}
\colon \mathbf U r_t\dfl^*\mathbf Z,
\qquad&
q_\tau
&:=
\npath{r_x\mathbf Z}
\colon r_x\mathbf Z\dfl^*\mathbf N_\tau,
\\[1mm]
h_\tau
&:=
\npath{\mathbf M r_t}
\colon \mathbf M r_t\dfl^*\mathbf N_\tau,
&
f_\tau^R
&:=
\npath{r_{\partial u}\mathbf V}
\colon r_{\partial u}\mathbf V\dfl^*\mathbf Z,
&
\rho_\tau
&:=
\npath{r_u\mathbf V}
\colon r_u\mathbf V\dfl^*\mathbf N_\tau.
\end{alignedat}
\]
The next lemma constructs the two transport faces.

\begin{lemma}
\label{L:NonsplittableTransportFacesFinal}
Let $\tau=(u,v,t)\in\mathfrak S_n$ and write
$u=x\,\partial u$. With the notation above, the following two
$2$-spheres admit fillings $\Theta_\tau^L$ and $\Theta_\tau^R$:
\begin{equation}
\label{eq:TransportFacesSplittableFinal}
\xymatrix @R=0.8em @C=1em @!C {
        & \mathbf M r_t
          \ar@2@/^/[dr]^-{h_\tau}
          \ar@3[]!<0pt,-10pt>;[dd]!<0pt,10pt>
          ^-*+{\Theta_\tau^L}
        \\
r_x\mathbf U r_t
          \ar@2@/^/[ur]^-{p_\tau1_{r_t}}
          \ar@2@/_/[dr]_-{1_{r_x}f_\tau^L}
        && \mathbf N_\tau
        \\
        & r_x\mathbf Z
          \ar@2@/_/[ur]_-{q_\tau}
}
\qquad
\xymatrix @R=0.8em @C=1em @!C {
        & r_u\mathbf V
          \ar@2@/^/[dr]^-{\rho_\tau}
          \ar@3[]!<0pt,-10pt>;[dd]!<0pt,10pt>
          ^-*+{\Theta_\tau^R}
        \\
r_xr_{\partial u}\mathbf V
          \ar@2@/^/[ur]^-{
            \alpha_{x,\partial u}1_{\mathbf V}}
          \ar@2@/_/[dr]_-{1_{r_x}f_\tau^R}
        && \mathbf N_\tau
        \\
        & r_x\mathbf Z
          \ar@2@/_/[ur]_-{q_\tau}
}
\end{equation}
Both fillings are oriented from the upper path to the lower path.
The upper paths are, respectively,
$\npath{r_x\mathbf U r_t}$ and
$\npath{r_xr_{\partial u}\mathbf V}$.
Moreover, the fillings can be constructed from Peiffer comparisons
and nonsplittable Squier confluences.
\end{lemma}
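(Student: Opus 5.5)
The existence of both fillings is immediate from Theorem~\ref{T:CoherentPresentation}: $\Sigma^{\mathrm{row,coh}}(n)$ is coherent, so every $2$-sphere of $\Sigma^{\mathrm{row}}_2(n)^\top$ has a filling. The two diagrams in~\eqref{eq:TransportFacesSplittableFinal} are $2$-spheres because all four paths in each end at the unique normal form $\mathbf N_\tau$. The real content is therefore the identification of the upper paths and the claim that only Peiffer comparisons and \emph{nonsplittable} Squier confluences are needed. I would establish this by a Noetherian induction on the source word along the leftmost normalization strategy.

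\textbf{Identifying the upper paths.} For the left face, the first leftmost step on $r_x\mathbf U r_t$ lies in the prefix $r_x\mathbf U$. Since $\mathbf U$ is a reading of an $N$-tableau, only the junction of $r_x$ with the top row of $\mathbf U$ can be reducible. By the prefix factorization $\npath{uv}=(\npath{u}1_v)\star_1\npath{\widehat u v}$, we get $\npath{r_x\mathbf U r_t}=(p_\tau 1_{r_t})\star_1\npath{\mathbf M r_t}=(p_\tau1_{r_t})\star_1h_\tau$, because $\widehat{r_x\mathbf U}=\mathbf M$. For the right face, $r_xr_{\partial u}$ is reducible and leftmost, so the first step is $\alpha_{x,\partial u}1_{\mathbf V}$. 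It lands on $r_u\mathbf V$, because $N(x\,\partial u)$ is the single row $u$ by Lemma~\ref{L:OneRowCriterion}. What remains is $\rho_\tau$.

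\textbf{General tiling claim.} I would then prove the following statement. For any word $w$ and any rewriting step $\phi\colon w\dfl w'$, the sphere $(\npath{w},\phi\star_1\npath{w'})$ is filled by a composite of whiskered Peiffer comparisons and whiskered generating confluences. This is the standard proof of Squier's theorem by induction on $w$ for the terminating order. Let $\psi$ be the leftmost step. If $\phi$ and $\psi$ are disjoint, the Peiffer comparison closes the square. If they overlap, the local branching is a whiskered critical one $(\alpha_{u',v'}1,1\alpha_{v',t'})$, filled by $\widehat{\mathcal X}^{\,w''}_{u',v',t'}$. In both cases the remaining comparisons between normalization paths are handled by the induction hypothesis applied to the strictly smaller targets. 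Applying this claim to the lower paths $(1_{r_x}f^L_\tau)\star_1q_\tau$ and $(1_{r_x}f^R_\tau)\star_1q_\tau$, one step at a time, produces $\Theta^L_\tau$ and $\Theta^R_\tau$.

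\textbf{Main obstacle: excluding splittable confluences.} The critical triples encountered in this tiling have first entry $r_x$, a letter. They may also be triples drawn from $(\partial u,v,t)$ or from intermediate rows. A triple whose first row is a letter is automatically nonsplittable, since splittability requires $\ell\geq2$. For the overlaps arising inside $f^L_\tau$ and $f^R_\tau$, I would use the bounded shapes of Proposition~\ref{Prop:boundedBranching} and the case tables of Subsubsection~\ref{SSS:ExplicitGeneratingConfluences} to list the overlapping branchings explicitly. In each such branching the first row is either $x$, or a top row $a,c,i,p$ produced by insertion. For the latter, Lemma~\ref{L:TopRowInclusion} together with the $\triangleleft$-relations gives $\partial a\triangleleft(\cdot)$, which shows nonsplittability. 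Any splittable triple that does appear has a strictly shorter first row than $u$, since $\ell(\partial u)<\ell(u)$. For those I would fall back on an induction on $\ell(u)$: replace each such triple by its own transport decomposition, which exists by the inductive hypothesis. This is the step I expect to require the most careful case checking.
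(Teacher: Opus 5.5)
Your identification of the two upper paths is correct, and the existence of \emph{some} filling is indeed immediate from coherence. But the whole content of the lemma is its last clause, and that part is not established.

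The general Squier tiling only says that a filling is a composite of whiskered Peiffer comparisons and whiskered generating confluences. It gives no control over which confluences occur, and your sketch of that control is wrong in its specifics:
\begin{itemize}
\item No critical branching whose first row is a top row $a,c,i,p$ arises. For instance, at $r_xr_ar_er_f$ the overlap is $(x,a,e)$.
\item The triple $(\partial u,v,t)$ does not occur in $\Theta^L_\tau$ or $\Theta^R_\tau$ at all. The cell $\mathcal X_{\partial u,v,t}$ lives in the separate face $F_{3,\tau}$ of the cube.
\item The fallback induction is unsound. Its premise, that every splittable triple met has first row strictly shorter than $u$, rests only on $\ell(\partial u)<\ell(u)$, which concerns a triple that is never met. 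The non-letter-first triples actually met have first rows $xa$ and $xi$, and these can have length $\ell(u)$. For $\tau=(12,1,t)$ one has $\mathbf U=r_2r_{12}$, so $a=\partial u$, and $\mathcal X_{12,12,t}$ occurs in $\Theta^L_\tau$. Their nonsplittability must therefore be proved directly, not by descent on length.
\end{itemize}

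The missing step is to run your leftmost tiling explicitly through the bounded shapes and record what appears. Carried out, it gives exactly the paper's pastings.
\begin{itemize}
\item Left face, $\mathbf U=r_b$: from $x<\min(\partial u)\le\min(v)$ you get $x\notin\supp(b)$, and the sphere is exactly the boundary of $\mathcal X_{x,b,t}$.
\item Left face, $\mathbf U=r_ar_b$: Lemma~\ref{L:TopRowInclusion} gives $\supp(a)\subseteq\supp(\partial u)$, hence $x<\min(a)$, $\Rr(N(xa))=r_{xa}$ and $\partial(xa)=a$. The tiling is the Peiffer exchange of $\alpha_{x,a}$ with $\alpha_{b,t}$, then $\mathcal X_{xa,b,t}$ (an identity if $xa\triangleleft b$), then possibly the letter-first $\widehat{\mathcal X}^{\,r_f}_{x,a,e}$.
\item Right face: one uses $\mathcal X_{x,\partial u,d}$ when $\mathbf V=r_d$. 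Otherwise one uses $\widehat{\mathcal X}^{\,r_d}_{x,\partial u,c}$, possibly followed by $\mathcal X_{x,g,d}$, or by a Peiffer exchange together with $\mathcal X_{xi,j,d}$. The latter uses $\supp(i)\subseteq\supp(\partial u)$, so $x<\min(i)$.
\end{itemize}
At each stage you must also check, via prefix factorization, that the confluence boundaries (which are leftmost normalization paths) match the tiling, so the recursion stops there. Nonsplittability then follows from $\partial(xa)=a\triangleleft b$ and $\partial(xi)=i\triangleleft j$, together with $\ell(x)=1$ for all the other confluences. No induction on $\ell(u)$ is needed.
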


\begin{proof}
The identification of the upper paths follows from prefix factorization
on the left and from the fact that $(x,\partial u)$ is the leftmost
reducible adjacent pair on the right.
Since $\tau$ is splittable, $(\partial u,v,t)$ is a critical triple.
Proposition~\ref{Prop:boundedBranching} and
Lemma~\ref{L:TwoRowsProductRows} therefore show that the cases considered
below are exhaustive.
In the pasting diagrams, common whiskering contexts and identity
$3$-cells on common terminal normalization paths are omitted.

\noindent
\textbf{Left transport.}
Suppose first that $\mathbf U=r_b$. Since $N(\partial u\,v)$ has one
row, Lemma~\ref{L:OneRowCriterion} gives~$b=\partial u\cup v$ and~$\min(v)\geq\max(\partial u)$.
Since $x<\min(\partial u)$, one has $x\notin\supp(b)$, and hence
$x\not\triangleleft b$. Moreover, by
Lemma~\ref{L:UsefulReduciblePairsCriticalTriple},
$b\not\triangleleft t$. Thus $(x,b,t)$ is a critical triple,
$f_\tau^L=\alpha_{b,t}$, and the required transport face is~$\Theta_\tau^L=\mathcal X_{x,b,t}$.
Suppose now that $\mathbf U=r_ar_b$, with $a\triangleleft b$. By
Lemma~\ref{L:TopRowInclusion},
$\supp(a)\subseteq\supp(\partial u)$, and hence $x<\min(a)$. Therefore
Lemma~\ref{L:OneRowCriterion} gives~$\Rr(N(xa))=r_{xa}$ and~
$\partial(xa)=a$.
Moreover, $b\not\triangleleft t$. Put~$\mathbf E:=\Rr(N(bt))$.
If $\mathbf E=r_f$, then~$\supp(a)\subseteq\supp(b)\subseteq\supp(f)$ and~$\min(f)\leq\min(b)<\min(a)$,
so $a\triangleleft f$. Thus no second step occurs. If
$\mathbf E=r_er_f$, a second step occurs exactly when
$a\not\triangleleft e$. In that case it is~$\alpha_{a,e}1_{r_f}$, and $(x,a,e)$ is a critical triple.
If $xa\triangleleft b$, define~$K_\tau^L$ to be the corresponding
identity $3$-cell. Otherwise, $(xa,b,t)$ is a critical triple, and we
set~$K_\tau^L:=\mathcal X_{xa,b,t}$. This confluence is nonsplittable
because~$\partial(xa)=a\triangleleft b$.
Let $\Pi_\tau^L$ denote the Peiffer comparison exchanging the independent
steps on $(x,a)$ and $(b,t)$.

The possible left transport pastings are listed in the following table:
\[
\begin{array}{|c|c|l|}
\hline
\mathbf U
& \text{condition}
& \text{faces pasted, in order}
\\ \hline
r_b
& \text{always}
& \mathcal X_{x,b,t}
\\
r_ar_b
& f_\tau^L\text{ has no second step}
& K_\tau^L,\ \Pi_\tau^L
\\
r_ar_b
& f_\tau^L\text{ has the second step }
  \alpha_{a,e}1_{r_f}
& K_\tau^L,\ \Pi_\tau^L,\
  \widehat{\mathcal X}^{\,r_f}_{x,a,e}
\\ \hline
\end{array}
\]
For $\mathbf U=r_ar_b$, the maximal case occurs when
$f_\tau^L$ has the second step
$\alpha_{a,e}1_{r_f}$ and $xa\not\triangleleft b$. Its pasting diagram is
\[
\xymatrix @R=3.3em @C=2.5em {
&
r_{xa}r_br_t
  \ar@2[r]^-{\alpha_{xa,b}1_{r_t}}
  \ar@2[dr]_-{1_{r_{xa}}\alpha_{b,t}}
&
\Rr(N(xa\,b))r_t
  \ar@2@/^/[dr]^-{\npath{\Rr(N(xa\,b))r_t}}
  \ar@3[]!<0pt,-10pt>;[d]!<0pt,10pt>
    ^-*+{K_\tau^L}
&
\\
r_xr_ar_br_t
  \ar@2@/^/[ur]^-{\alpha_{x,a}1_{r_b}1_{r_t}}
    _-{}="pi-upper"
  \ar@2@/_/[dr]_-{1_{r_x}1_{r_a}\alpha_{b,t}}
    ^-{}="pi-lower"
  \ar@3 "pi-upper"!<22pt,-10pt>;
        "pi-lower"!<22pt,10pt>
    ^-*+{\Pi_\tau^L}
&&
r_{xa}r_er_f
  \ar@2[r]^-{\npath{r_{xa}r_er_f}}
  \ar@3[]!<0pt,-10pt>;[d]!<0pt,10pt>
    ^-*+{\widehat{\mathcal X}^{\,r_f}_{x,a,e}}
&
\mathbf N_\tau
\\
&
r_xr_ar_er_f
  \ar@2[ur]^-{\alpha_{x,a}1_{r_e}1_{r_f}}
  \ar@2[r]_-{1_{r_x}\alpha_{a,e}1_{r_f}}
&
r_x\Rr(N(ae))r_f
  \ar@2@/_/[ur]_-{\npath{r_x\Rr(N(ae))r_f}}
&
}
\]
The other cases are obtained as indicated in the table:
$\widehat{\mathcal X}^{\,r_f}_{x,a,e}$ is omitted when
$f_\tau^L$ has no second step, and the identity face $K_\tau^L$ is
contracted when $xa\triangleleft b$. The case $\mathbf U=r_b$ consists
of the single face $\mathcal X_{x,b,t}$. In every case, prefix
factorization identifies the lower boundary with~$(1_{r_x}f_\tau^L)\star_1q_\tau$.
Thus the resulting pasting defines the required $3$-cell
\[
\Theta_\tau^L\colon
\npath{r_x\mathbf U r_t}
\Rrightarrow
(1_{r_x}f_\tau^L)\star_1q_\tau.
\]

\noindent
\textbf{Right transport.}
Suppose first that $\mathbf V=r_d$. By
Proposition~\ref{Prop:boundedBranching}, $f_\tau^R$ is either the identity
or the single step $\alpha_{\partial u,d}$. In the first case the two
transport paths coincide. 
In the second case, $(x,\partial u,d)$ is a critical triple, and the
required face is~$\mathcal X_{x,\partial u,d}$, which is letter-first.
Suppose now that $\mathbf V=r_cr_d$, with $c\triangleleft d$. By
Lemma~\ref{L:UsefulReduciblePairsCriticalTriple},
$\partial u\not\triangleleft c$, so $f_\tau^R$ begins with
$\alpha_{\partial u,c}1_{r_d}$. Hence $(x,\partial u,c)$ is a critical triple.  Put~$\mathbf J:=\Rr(N(\partial u\,c))$.
By prefix factorization, the whiskered confluence~$K_{0,\tau}^R
:=
\widehat{\mathcal X}^{\,r_d}_{x,\partial u,c}$
has boundary
\[
K_{0,\tau}^R\colon
\npath{r_xr_{\partial u}r_cr_d}
\Rrightarrow
(1_{r_x}\alpha_{\partial u,c}1_{r_d})
\star_1
\npath{r_x\mathbf J r_d}.
\]
The generating confluence occurring in $K_{0,\tau}^R$ is letter-first.
If $\mathbf J=r_g$, then Lemma~\ref{L:OneRowCriterion} gives~$g=\partial u\cup c$ and~$\min(c)\geq\max(\partial u)$.
Since $x<\min(\partial u)$, one has $x\notin\supp(g)$, and therefore
$x\not\triangleleft g$.
If $f_\tau^R$ has no second step, then $g\triangleleft d$, and
$K_{0,\tau}^R$ already gives the required transport. If $f_\tau^R$ has
the second step~$\alpha_{g,d}$, then $(x,g,d)$ is critical, and the
required transport is obtained by pasting $K_{0,\tau}^R$ with the
letter-first confluence~$\mathcal X_{x,g,d}$.
Finally, suppose that $\mathbf J=r_ir_j$, with $i\triangleleft j$. By
Lemma~\ref{L:TopRowInclusion},
$\supp(i)\subseteq\supp(\partial u)$, hence~$x<\min(i)$. Thus~$
\Rr(N(xi))=r_{xi}$ and~$\partial(xi)=i$.
If $f_\tau^R$ has no second step, then $j\triangleleft d$, and
$K_{0,\tau}^R$ already gives the required transport.
Suppose now that $f_\tau^R$ has a second step. Then
$j\not\triangleleft d$, and this step is
$1_{r_i}\alpha_{j,d}$. Put~$\mathbf H:=\Rr(N(jd))$
and define~$
K_{1,\tau}^R\colon
\npath{r_{xi}r_jr_d}
\Rrightarrow
(1_{r_{xi}}\alpha_{j,d})\star_1
\npath{r_{xi}\mathbf H}$
to be the identity if~$xi\triangleleft j$, and
$\mathcal X_{xi,j,d}$ otherwise. In the latter case $(xi,j,d)$ is
critical, and the confluence is nonsplittable because~$\partial(xi)=i\triangleleft j$.
Let $\Pi_\tau^R$ denote the Peiffer comparison exchanging the independent
steps on $(x,i)$ and $(j,d)$.

The possible right transport pastings are listed in the following table:
\[
\begin{array}{|c|c|l|}
\hline
\mathbf V
& \text{condition}
& \text{faces pasted, in order}
\\ \hline
r_d
& f_\tau^R=1
& \mathrm{id}
\\
r_d
& f_\tau^R=\alpha_{\partial u,d}
& \mathcal X_{x,\partial u,d}
\\
r_cr_d,\ \mathbf J=r_g
& f_\tau^R\text{ has no second step}
& K_{0,\tau}^R
\\
r_cr_d,\ \mathbf J=r_g
& f_\tau^R\text{ has the second step }\alpha_{g,d}
& K_{0,\tau}^R,\ \mathcal X_{x,g,d}
\\
r_cr_d,\ \mathbf J=r_ir_j
& f_\tau^R\text{ has no second step}
& K_{0,\tau}^R
\\
r_cr_d,\ \mathbf J=r_ir_j
& f_\tau^R\text{ has the second step }
  1_{r_i}\alpha_{j,d}
& K_{0,\tau}^R,\ K_{1,\tau}^R,\ \Pi_\tau^R
\\ \hline
\end{array}
\]
Suppose that $\mathbf V=r_cr_d$ and $\mathbf J=r_g$. The maximal case
occurs when $f_\tau^R$ has the second step $\alpha_{g,d}$, and its
pasting diagram is
\[
\xymatrix @R=3.2em @C=3.5em {
&
r_ur_cr_d
  \ar@2@/^3ex/[drr]^-{\npath{r_ur_cr_d}}
  \ar@3[]!<0pt,-10pt>;[d]!<0pt,10pt>
    ^-*+{K_{0,\tau}^R}
&&
\\
r_xr_{\partial u}r_cr_d
  \ar@2@/^3ex/[ur]^-{
    \alpha_{x,\partial u}1_{r_c}1_{r_d}}
  \ar@2[r]_-{
    1_{r_x}\alpha_{\partial u,c}1_{r_d}}
&
r_xr_gr_d
  \ar@2[r]_-{\alpha_{x,g}1_{r_d}}
  \ar@2[dr]_-{1_{r_x}\alpha_{g,d}}
&
\Rr(N(xg))r_d
  \ar@2[r]^(.35){
    \npath{\Rr(N(xg))r_d}}
  \ar@3[]!<0pt,-10pt>;[d]!<0pt,10pt>
    ^-*+{\mathcal X_{x,g,d}}
&
\mathbf N_\tau
\\
&&
r_x\Rr(N(gd))
  \ar@2@/_/[ur]_-{\npath{r_x\Rr(N(gd))}}
&
}
\]
If $f_\tau^R$ has no second step, only $K_{0,\tau}^R$ remains:
the $\mathcal X_{x,g,d}$-face and its lower boundary are omitted,
whereas its upper boundary remains part of the target path of
$K_{0,\tau}^R$.
Suppose now that $\mathbf V=r_cr_d$ and $\mathbf J=r_ir_j$. The maximal
case occurs when $f_\tau^R$ has the second step
$1_{r_i}\alpha_{j,d}$ and $xi\not\triangleleft j$. Its pasting diagram is
\[
\xymatrix @R=3.3em @C=3.7em {
r_xr_{\partial u}r_cr_d
  \ar@2[r]^-{
    \alpha_{x,\partial u}1_{r_c}1_{r_d}}
  \ar@2[dr]_-{
    1_{r_x}\alpha_{\partial u,c}1_{r_d}}
&
r_ur_cr_d
  \ar@2@/^5ex/[drrr]^(.25){
    \npath{r_ur_cr_d}}
  \ar@3[]!<0pt,-10pt>;[d]!<0pt,10pt>
    ^-*+{K_{0,\tau}^R}
&&&
\\
&
r_xr_ir_jr_d
  \ar@2[r]^-{
    \alpha_{x,i}1_{r_j}1_{r_d}}
  \ar@2[dr]_-{
    1_{r_x}1_{r_i}\alpha_{j,d}}
&
r_{xi}r_jr_d
  \ar@2[r]^-{
    \alpha_{xi,j}1_{r_d}}
  \ar@2[dr]^-{
    1_{r_{xi}}\alpha_{j,d}}
  \ar@3[]!<0pt,-10pt>;[d]!<0pt,10pt>
    ^-*+{\Pi_\tau^R}
&
\Rr(N(xi\,j))r_d
  \ar@2[r]^(.35){
    \npath{\Rr(N(xi\,j))r_d}}
  \ar@3[]!<0pt,-10pt>;[d]!<0pt,10pt>
    ^-*+{K_{1,\tau}^R}
&
\mathbf N_\tau
\\
&&
r_xr_i\mathbf H
  \ar@2@/_/[r]_-{
    \alpha_{x,i}1_{\mathbf H}}
&
r_{xi}\mathbf H
  \ar@2@/_/[ur]_-{
    \npath{r_{xi}\mathbf H}}
&
}
\]
If $f_\tau^R$ has no second step, only $K_{0,\tau}^R$ remains. If the
second step occurs but $xi\triangleleft j$, the identity face
$K_{1,\tau}^R$ is contracted, while the Peiffer face
$\Pi_\tau^R$ remains. The cases $\mathbf V=r_d$ are given by the first
two lines of the table.
In every case, prefix factorization identifies the lower boundary with~$(1_{r_x}f_\tau^R)\star_1q_\tau$.
Thus the resulting pasting defines the required $3$-cell
\[
\Theta_\tau^R\colon
\npath{r_xr_{\partial u}\mathbf V}
\Rrightarrow
(1_{r_x}f_\tau^R)\star_1q_\tau.
\]

The only Squier confluences used in the two transport fillings that are
not letter-first are
$\mathcal X_{xa,b,t}$ and $\mathcal X_{xi,j,d}$. They are
nonsplittable because~$\partial(xa)=a\triangleleft b$ and~$
\partial(xi)=i\triangleleft j$.
Hence every generating confluence occurring in
$\Theta_\tau^L$ or $\Theta_\tau^R$ is nonsplittable.
\end{proof}

\subsubsection{Elimination of splittable confluences}

Fix $\tau=(u,v,t)\in\mathfrak S_n$ and write $u=x\,\partial u$. The triples~$(x,\partial u,v)$ and~$(\partial u,v,t)$ are critical. Indeed, $x\not\triangleleft\partial u$ because $x\notin\supp(\partial u)$, while $\partial u\not\triangleleft v$ by splittability and $v\not\triangleleft t$ by criticality of $\tau$.

Set
\[
\begin{alignedat}{2}
P_{\tau,0}
&:=
A_\tau\star_1(\alpha_{u,v}1_{r_t})\star_1h_\tau,
\qquad&
P_{\tau,1}
&:=
B_\tau\star_1(p_\tau1_{r_t})\star_1h_\tau,
\\
P_{\tau,2}
&:=
B_\tau\star_1(1_{r_x}f_\tau^L)\star_1q_\tau,
&
P_{\tau,3}
&:=
C_\tau\star_1(1_{r_x}f_\tau^R)\star_1q_\tau,
\\
P_{\tau,4}
&:=
C_\tau\star_1(\alpha_{x,\partial u}1_{\mathbf V})
\star_1\rho_\tau,
&
P_{\tau,5}
&:=
A_\tau\star_1(1_{r_u}\alpha_{v,t})
\star_1\rho_\tau.
\end{alignedat}
\]
Write~$P_\tau:=P_{\tau,0}$ and~$Q_\tau:=P_{\tau,5}$.
These are the boundary paths of the whiskered generating confluence
\[
1_{A_\tau}\star_1\mathcal X_{u,v,t}
\colon
P_\tau\Rrightarrow Q_\tau.
\]

Let~$
\Pi_\tau\colon
C_\tau\star_1(\alpha_{x,\partial u}1_{\mathbf V})
\Rrightarrow
A_\tau\star_1(1_{r_u}\alpha_{v,t})$
be the Peiffer comparison between the disjoint steps on~$(x,\partial u)$ and~$(v,t)$, and define
\[
\begin{alignedat}{3}
F_{1,\tau}
&:=
(\mathcal X_{x,\partial u,v}1_{r_t})
\star_1 1_{h_\tau},
\qquad&
F_{2,\tau}
&:=
1_{B_\tau}\star_1\Theta_\tau^L,
\qquad&
F_{3,\tau}
&:=
(1_{r_x}\mathcal X_{\partial u,v,t})
\star_1 1_{q_\tau},
\\[1mm]
F_{4,\tau}
&:=
1_{C_\tau}\star_1(\Theta_\tau^R)^{-1},
&
F_{5,\tau}
&:=
\Pi_\tau\star_1 1_{\rho_\tau}.
&
&&
\end{alignedat}
\]
By the transport diagrams in
\eqref{eq:TransportFacesSplittableFinal}
and the definitions of
$\mathcal X_{x,\partial u,v}$,
$\mathcal X_{\partial u,v,t}$, and~$\Pi_\tau$, one has
\[
F_{i,\tau}\colon
P_{\tau,i-1}\Rrightarrow P_{\tau,i}
\qquad
(1\leq i\leq5).
\]
Their composite is therefore the $3$-cell
\[
\Psi_\tau
:=
F_{1,\tau}
\star_2F_{2,\tau}
\star_2F_{3,\tau}
\star_2F_{4,\tau}
\star_2F_{5,\tau}
\colon
P_\tau\Rrightarrow Q_\tau.
\]

The diagram below represents the five cells
$F_{1,\tau},\ldots,F_{5,\tau}$ as labels of five faces of the cube,
with their common identity whiskers suppressed. The sixth face is~$1_{A_\tau}\star_1\mathcal X_{u,v,t}$.
\[
\xymatrix @C=1em @R=1em{
r_ur_vr_t
  \ar@2[rrrrrrrrrr]^-{\alpha_{u,v}1_{r_t}}
  \ar@2[dddddddd]_-{1_{r_u}\alpha_{v,t}}
&&&&&&&&&&
\mathbf M r_t
  \ar@2[dddddddd]^-{h_\tau}
\\
&&&&& { F_{1,\tau}} &&&&&
\\
&&
W_\tau
  \ar@2[uull]^-{A_\tau}
  \ar@2[rrrrrr]^-{B_\tau}
  \ar@2[ddd]_-{C_\tau}
&&&&&&
r_x\mathbf U r_t
  \ar@2[uurr]_-{p_\tau1_{r_t}}
  \ar@2[ddd]_-{1_{r_x}f_\tau^L}
&&
\\
& { F_{5,\tau}}
&&&& { F_{3,\tau}}
&&&& { F_{2,\tau}} &
\\
&&&&&&&&&&
\\
&&
r_xr_{\partial u}\mathbf V
  \ar@2[dddll]^-{\alpha_{x,\partial u}1_{\mathbf V}}
  \ar@2[rrrrrr]^-{1_{r_x}f_\tau^R}
&&&&&&
r_x\mathbf Z
  \ar@2[dddrr]^-{q_\tau}
&&
\\
&&&&& { F_{4,\tau}} &&&&&
\\
&&&&&&&&&&
\\
r_u\mathbf V
  \ar@2[rrrrrrrrrr]_-{\rho_\tau}
&&&&&&&&&&
\mathbf N_\tau
}
\]
Consequently,
\[
\widetilde\Omega_\tau
:=
\bigl(
\Psi_\tau,
1_{A_\tau}\star_1\mathcal X_{u,v,t}
\bigr)
\]
is a $3$-sphere. Since $A_\tau$ is invertible in the track
$2$-category, set~$
S_\tau
:=
1_{A_\tau^{-1}}\star_1\Psi_\tau$.
Prewhiskering $\widetilde\Omega_\tau$ by~$A_\tau^{-1}$ yields the
$3$-sphere~$
\Omega_\tau
:=
\bigl(
S_\tau,
\mathcal X_{u,v,t}
\bigr)$.
Thus $S_\tau$ is parallel to
$\mathcal X_{u,v,t}$.

\begin{proposition}
\label{P:StrictDescentSplittableDependenciesFinal}
Let $\tau=(u,v,t)\in\mathfrak S_n$. If a splittable generating
confluence $\mathcal X_{u',v',t'}$, or its inverse, occurs in
$S_\tau$, then
\[
\ell(u')<\ell(u).
\]
In particular, neither $\mathcal X_{u,v,t}$ nor its inverse occurs
in $S_\tau$.
\end{proposition}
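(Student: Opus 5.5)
We would argue directly from the explicit decomposition $S_\tau=1_{A_\tau^{-1}}\star_1(F_{1,\tau}\star_2\cdots\star_2F_{5,\tau})$, listing every generating confluence occurring in each of the five faces. Whiskering by $1_{A_\tau^{-1}}$, by identity $2$-cells, and by the identity $3$-cells $1_{h_\tau}$, $1_{q_\tau}$, $1_{\rho_\tau}$, $1_{B_\tau}$, $1_{C_\tau}$ introduces no generating $3$-cells. So it suffices to examine the confluences inside $F_{1,\tau}$, $\Theta^L_\tau$, $F_{3,\tau}$, $\Theta^R_\tau$, and the Peiffer comparison $\Pi_\tau$.

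The face $F_{5,\tau}=\Pi_\tau\star_1 1_{\rho_\tau}$ is a Peiffer comparison, which is an identity $3$-cell by the exchange law, so it contains no generating confluence. The face $F_{1,\tau}$ contains only $\mathcal X_{x,\partial u,v}$, whose first row $x$ has length $1$. Since a splittable triple requires first row of length at least $2$, this confluence is not splittable. In any case $1<\ell(u)$, because $\ell(u)\ge2$. The face $F_{3,\tau}$ contains only $\mathcal X_{\partial u,v,t}$. It may well be splittable, but $\ell(\partial u)=\ell(u)-1<\ell(u)$, so it satisfies the required strict inequality.

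For $F_{2,\tau}$ and $F_{4,\tau}$ we invoke the last assertion of Lemma~\ref{L:NonsplittableTransportFacesFinal}: every generating confluence occurring in $\Theta^L_\tau$ or $\Theta^R_\tau$ is nonsplittable. To be careful, we recheck this against the tables. The occurring confluences are the letter-first ones $\mathcal X_{x,b,t}$, $\mathcal X_{x,a,e}$, $\mathcal X_{x,\partial u,d}$, $\mathcal X_{x,\partial u,c}$ and $\mathcal X_{x,g,d}$, which are nonsplittable because their first row is a letter. The remaining ones are $\mathcal X_{xa,b,t}$ and $\mathcal X_{xi,j,d}$, which are nonsplittable since $\partial(xa)=a\triangleleft b$ and $\partial(xi)=i\triangleleft j$. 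Taking inverses, as in $(\Theta^R_\tau)^{-1}$, does not change which generators occur. Hence the only possibly splittable confluence in $S_\tau$ is $\mathcal X_{\partial u,v,t}$, together with its inverse, and this proves the inequality $\ell(u')<\ell(u)$.

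The second claim follows at once. The triple $\tau$ itself is splittable, and $\ell(u)\not<\ell(u)$, so neither $\mathcal X_{u,v,t}$ nor its inverse occurs in $S_\tau$.

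The main obstacle is making sure that the whiskered confluences $\widehat{\mathcal X}^{\,w}$ used inside the transport faces contribute only their underlying generator. By definition, $\widehat{\mathcal X}^{\,w}_{u,v,t}=(\mathcal X_{u,v,t}1_w)\star_1 1_{\npath{\cdots}}$, so this holds. One must also check that the identity faces, namely $K$ when $xa\triangleleft b$ or $xi\triangleleft j$, and the Peiffer faces $\Pi^L_\tau$, $\Pi^R_\tau$, contain no generators; this follows from the exchange law.
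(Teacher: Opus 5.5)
Your proposal is correct and follows the paper's proof essentially verbatim. You inspect $\Psi_\tau$ face by face: $F_{1,\tau}$ is letter-first, $F_{2,\tau}$ and $F_{4,\tau}$ contain only nonsplittable confluences by Lemma~\ref{L:NonsplittableTransportFacesFinal}, $F_{5,\tau}$ is Peiffer, the only possibly splittable generator is $\mathcal X_{\partial u,v,t}$ coming from $F_{3,\tau}$, and the whisker $1_{A_\tau^{-1}}$ adds no generating $3$-cell. Your re-check of the confluences in the transport tables agrees with that lemma, and so does your remark about the whiskered cells $\widehat{\mathcal X}^{\,w}$, but neither is needed beyond citing the lemma.
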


\begin{proof}
The face $F_{1,\tau}$ contains only the letter-first confluence
$\mathcal X_{x,\partial u,v}$. By
Lemma~\ref{L:NonsplittableTransportFacesFinal}, every generating
confluence occurring in $F_{2,\tau}$ or $F_{4,\tau}$ is
nonsplittable, while $F_{5,\tau}$ is Peiffer. Hence a splittable
generating confluence occurring in $\Psi_\tau$ can only come from
$F_{3,\tau}$. Its underlying generator is
$\mathcal X_{\partial u,v,t}$, whose first row has length
\[
\ell(\partial u)=\ell(u)-1<\ell(u).
\]
The identity whisker $1_{A_\tau^{-1}}$ introduces no generating
$3$-cell, so the same conclusion holds in
$S_\tau$.
Thus~$\mathcal X_{u,v,t}$ and its inverse cannot occur in $S_\tau$.
\end{proof}

Set
\[
\Gamma_4^{\mathrm{spl}}(n)
:=
\{\Omega_\tau\mid\tau\in\mathfrak S_n\},
\qquad
\Gamma^{\mathrm{spl}}(n)
:=
\bigl(
\varnothing,
\varnothing,
\Gamma_4^{\mathrm{spl}}(n)
\bigr).
\]

\begin{theorem}
\label{T:FirstSplittableReductionFinal}
For  $n\geq1$, $\Gamma^{\mathrm{spl}}(n)$ is a collapsible part of
$\Sigma^{\mathrm{row,coh}}(n)$. Hence
\[
\Sigma^{\mathrm{row,scoh}}(n)
:=
R_{\Gamma^{\mathrm{spl}}(n)}
\bigl(
\Sigma^{\mathrm{row,coh}}(n)
\bigr)
\]
is a finite coherent presentation of the stylic monoid $\Styl_n$.
Its underlying $2$-polygraph is $\Sigma^{\mathrm{row}}(n)$, and its
generating $3$-cells are
\[
\Sigma^{\mathrm{row,scoh}}_3(n)
=
\left\{
\mathcal X_{u,v,t}
\ \middle|\
(u,v,t)\in
\mathfrak C_n\setminus\mathfrak S_n
\right\}.
\]
\end{theorem}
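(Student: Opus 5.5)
We verify that $\Gamma^{\mathrm{spl}}(n)=(\varnothing,\varnothing,\Gamma_4^{\mathrm{spl}}(n))$ satisfies the three conditions in the definition of a collapsible part. Then we invoke the homotopical reduction of \cite[Section~2.3.1]{GaussentGuiraudMalbos15} to obtain a Tietze equivalent $(3,1)$-polygraph, and read off its cells. Since $\Gamma_2$ and $\Gamma_3$ are empty, condition~\textbf{ii)} is vacuous. No $2$-cell is eliminated, so the underlying $2$-polygraph remains $\Sigma^{\mathrm{row}}(n)$. The only cells removed are the redundant targets $t_3(\Omega_\tau)=\mathcal X_{u,v,t}$ for $\tau\in\mathfrak S_n$. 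Distinct splittable triples index distinct generating $3$-cells, so the surviving generating $3$-cells are exactly the $\mathcal X_{u,v,t}$ with $(u,v,t)\in\mathfrak C_n\setminus\mathfrak S_n$. Moreover, because no $2$-cell disappears, the boundaries of the surviving $3$-cells are unchanged.

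For condition~\textbf{i)}, we check that each $\Omega_\tau=(S_\tau,\mathcal X_{u,v,t})$ is a genuine $3$-sphere whose target is a generating $3$-cell not occurring in the source. Parallelism comes from the construction preceding the statement. Each face $F_{i,\tau}\colon P_{\tau,i-1}\Rrightarrow P_{\tau,i}$ is well typed: this uses Lemma~\ref{L:NonsplittableTransportFacesFinal} for $F_{2,\tau}$ and $F_{4,\tau}$, the definitions of the confluences $\mathcal X_{x,\partial u,v}$ and $\mathcal X_{\partial u,v,t}$, the prefix factorization of leftmost normalization paths, and the Peiffer comparison $\Pi_\tau$. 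It follows that $\Psi_\tau\colon P_\tau\Rrightarrow Q_\tau$. Prewhiskering by $A_\tau^{-1}$ and cancelling $A_\tau^{-1}\star_1A_\tau$ in the $(2,1)$-category gives the source and target of $\mathcal X_{u,v,t}$. Proposition~\ref{P:StrictDescentSplittableDependenciesFinal} shows that $\mathcal X_{u,v,t}^{\pm1}$ does not occur in $S_\tau$. Hence each $\Omega_\tau$ is collapsible as it stands, and no Nielsen transformation is needed.

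The main point is condition~\textbf{iii)}: we need a well-founded order on the generating $3$-cells such that $\mathcal X_{u,v,t}$ is strictly greater than every generating $3$-cell occurring in $S_\tau$. This is what guarantees that the recursive definition of $R_\Gamma$ terminates. A single $S_\tau$ may contain other splittable confluences, which are themselves eliminated, so one must rule out cycles. Proposition~\ref{P:StrictDescentSplittableDependenciesFinal} suggests the order. We declare every nonsplittable confluence smaller than every splittable one. Among splittable confluences, we order by $\ell(u)$, the length of the first row, and break ties arbitrarily by a fixed total order. Since $\Sigma_3^{\mathrm{row}}(n)$ is finite, this order is well founded. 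By the proposition, every splittable $3$-cell in $S_\tau$ has strictly shorter first row, and all other $3$-cells in $S_\tau$ are nonsplittable, so they are smaller. The orders on $1$- and $2$-cells can be taken arbitrary, since $\Gamma_2=\Gamma_3=\varnothing$. The one subtlety is making sure that the whiskers and Peiffer comparisons contribute no generating $3$-cells. This holds because they are identities in $\tck{\Sigma}_3$.

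Finally, Tietze transformations preserve both coherence and the presented monoid \cite[Theorem~2.1.3]{GaussentGuiraudMalbos15}. Applying this to the coherent presentation $\Sigma^{\mathrm{row,coh}}(n)$ of Theorem~\ref{T:CoherentPresentation} shows that $\Sigma^{\mathrm{row,scoh}}(n)$ is a coherent presentation of $\Styl_n$. It is finite because it is obtained from a finite polygraph by removing cells.
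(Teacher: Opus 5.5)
Your proposal is correct and follows the paper's proof essentially step for step. Condition~\textbf{i)} comes from Proposition~\ref{P:StrictDescentSplittableDependenciesFinal}, condition~\textbf{ii)} is vacuous because $\Gamma_2^{\mathrm{spl}}(n)=\Gamma_3^{\mathrm{spl}}(n)=\varnothing$, and condition~\textbf{iii)} uses the same order as the paper, with nonsplittable confluences placed below splittable ones and splittable ones ordered by the length of the first row; your extra checks (well-typedness of the faces, distinct targets, unchanged boundaries of the surviving $3$-cells) agree with the construction preceding the statement.
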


\begin{proof}
Let $\tau=(u,v,t)\in\mathfrak S_n$. The target of $\Omega_\tau$ is the
generating $3$-cell $\mathcal X_{u,v,t}$, while
Proposition~\ref{P:StrictDescentSplittableDependenciesFinal} shows that
its source $S_\tau$ contains neither $\mathcal X_{u,v,t}$ nor
$\mathcal X_{u,v,t}^{-1}$. Thus condition~\textup{\textbf{i)}} holds.
Condition~\textup{\textbf{ii)}} is automatic because~$
\Gamma_2^{\mathrm{spl}}(n)
=
\Gamma_3^{\mathrm{spl}}(n)
=
\varnothing$.
For condition~\textup{\textbf{iii)}}, choose arbitrary well-founded
orders on the generating $1$-cells and $2$-cells. On the generating
$3$-cells, put every nonsplittable confluence below every splittable
confluence, and order the splittable confluences by increasing length
of the first row of their indexing triples. Refine this order
arbitrarily to a total order. Since all the generating sets are finite,
these orders are well founded.
Let a generating confluence $\mathcal X_{u',v',t'}$, or its inverse,
occur in $S_\tau$.  If
$(u',v',t')$ is nonsplittable, then~$\mathcal X_{u',v',t'}
\prec
\mathcal X_{u,v,t}$.
If it is splittable, then
Proposition~\ref{P:StrictDescentSplittableDependenciesFinal} gives
$\ell(u')<\ell(u)$, and hence~$\mathcal X_{u',v',t'}\prec \mathcal X_{u,v,t}$.
Thus condition~\textup{\textbf{iii)}} holds, and~$\Gamma^{\mathrm{spl}}(n)$ is a collapsible part.

Homotopical reduction therefore removes exactly the generating
$3$-cells indexed by $\mathfrak S_n$, preserves coherence, and leaves
the underlying $2$-polygraph unchanged. Hence the surviving generating
$3$-cells are precisely those indexed by
$\mathfrak C_n\setminus\mathfrak S_n$.
\end{proof}

\subsubsection{Counting the eliminated confluences}

The generating $3$-cells eliminated in the first reduction are indexed by
$\mathfrak S_n$. We now count these cells.

\begin{proposition}
\label{P:CountSplittableTriplesFinal}
Fix $n\geq1$. For a row $v=y_1\cdots y_k$, let $U_n(v)$ be the
number of rows $u$ such that~$\ell(u)\geq2$ and $\partial u\not\triangleleft v$, and let $T_n(v)$
be the number of rows $t$ such that $v\not\triangleleft t$. Then
\begin{align}
U_n(v)
&=
2^n-n-1
-
\sum_{j=2}^k(y_j-1)2^{k-j},
\label{eq:CountSplittableFirstRows}
\\
T_n(v)
&=
2^n-1
-
\bigl(2^{y_1-1}-1\bigr)2^{n-k-y_1+1},
\label{eq:CountSplittableThirdRows}
\end{align}
where the sum in \eqref{eq:CountSplittableFirstRows} is zero when
$k=1$. Moreover,
\begin{equation}
\label{eq:CountSplittableFinal}
|\mathfrak S_n|
=
\sum_{v\in\Rr_n}
U_n(v)T_n(v).
\end{equation}
\end{proposition}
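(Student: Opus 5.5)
The plan is to fix the middle row $v=y_1\cdots y_k$ and count the admissible first rows $u$ and third rows $t$ separately. The key observation is that, once $v$ is fixed, the splittability conditions split into a condition on $u$ alone and a condition on $t$ alone.

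For \eqref{eq:CountSplittableFinal}, recall that $(u,v,t)\in\mathfrak S_n$ means that $(u,v,t)$ is critical, $\ell(u)\ge2$ and $\partial u\not\triangleleft v$. By Lemma~\ref{L:SplittableAutomaticallyCriticalFinal}, the condition $\partial u\not\triangleleft v$ already implies $u\not\triangleleft v$. Hence $(u,v,t)\in\mathfrak S_n$ if and only if $\ell(u)\ge2$, $\partial u\not\triangleleft v$ and $v\not\triangleleft t$. For fixed $v$ these are independent conditions on $u$ and on $t$. The number of splittable triples with middle row $v$ is therefore $U_n(v)T_n(v)$, and summing over $v\in\Rr_n$ gives \eqref{eq:CountSplittableFinal}.

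For \eqref{eq:CountSplittableThirdRows}, I would reuse verbatim the count from the proof of Proposition~\ref{Prop:CountingCells}, with $m=y_1$. A row $t$ with $v\triangleleft t$ is obtained by adjoining to $\supp(v)$ a nonempty subset of $\{1,\dots,y_1-1\}$ and an arbitrary subset of the $n-k-y_1+1$ elements of $\{y_1+1,\dots,n\}\setminus\supp(v)$. Subtracting this number from $2^n-1$ gives the formula.

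The only new computation is \eqref{eq:CountSplittableFirstRows}. There are $2^n-n-1$ rows of length at least two, so it suffices to count the rows $u=x\,\partial u$ with $\ell(u)\ge2$ and $\partial u\triangleleft v$. The condition $\partial u\triangleleft v$ means that $\partial u$ is a nonempty subset of $\{y_2,\dots,y_k\}$; in particular such $u$ exist only when $k\ge2$. Classify these rows by $\min(\partial u)=y_j$ with $2\le j\le k$. The remaining letters of $\partial u$ form an arbitrary subset of $\{y_{j+1},\dots,y_k\}$, giving $2^{k-j}$ choices. Since $u$ is a row, $x$ is any letter with $x<y_j$, giving $y_j-1$ choices, and conversely every such pair $(x,\partial u)$ defines a row $u$ of length at least two. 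Summing over $j$ yields $\sum_{j=2}^k(y_j-1)2^{k-j}$, which is empty for $k=1$.

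The main point needing care is the bijection between the rows $u$ and the pairs $(x,\partial u)$: one must check that $x$ ranges over all of $\{1,\dots,y_j-1\}$, with no constraint linking $x$ to $\supp(v)$, since $x$ is allowed to lie in $v$. The rest is routine bookkeeping.
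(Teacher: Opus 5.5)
Your proposal is correct and follows the paper's proof essentially step for step. You use Lemma~\ref{L:SplittableAutomaticallyCriticalFinal} to make the conditions on $u$ and $t$ independent for a fixed middle row $v$, you reuse the count of rows $t$ with $v\triangleleft t$, and you count the rows $u$ with $\partial u\triangleleft v$ by classifying them according to $\min(\partial u)=y_j$ with $j\geq2$, which gives the term $(y_j-1)2^{k-j}$.
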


\begin{proof}
Fix a row $v=y_1\cdots y_k$.
We first compute $T_n(v)$. There are $2^n-1$ nonempty rows. The
condition~$v\triangleleft t$ means that $t$ contains all the letters
of $v$ and at least one letter smaller than $y_1$. The required nonempty subset of
$\{1,\ldots,y_1-1\}$ can be chosen in $2^{y_1-1}-1$ ways.
Among the letters larger than $y_1$, exactly
\[
n-y_1-(k-1)=n-k-y_1+1
\]
do not occur in $v$, and an arbitrary subset of these letters may be
chosen. Hence the number of rows $t$ satisfying $v\triangleleft t$ is
\[
\bigl(2^{y_1-1}-1\bigr)2^{n-k-y_1+1}.
\]
Subtracting this number from $2^n-1$ proves
\eqref{eq:CountSplittableThirdRows}.

 We now compute $U_n(v)$. There are $2^n-n-1$ rows of length at least
two. Write each such row uniquely as~$u=x\,\partial u$, where $x$ is
its first letter.  If $\partial u\triangleleft v$ and
$\min(\partial u)=y_j$, then~$y_1=\min(v)<\min(\partial u)=y_j$,
so~$j\geq2$. The remaining letters of $\partial u$ may be chosen
freely among $y_{j+1},\ldots,y_k$, giving $2^{k-j}$ choices.
The first letter $x$ may be any letter smaller than $y_j$, giving
$y_j-1$ choices. Therefore the number of rows $u$ satisfying
$\partial u\triangleleft v$ is
\[
\sum_{j=2}^k
(y_j-1)2^{k-j}.
\]
Subtracting this number from $2^n-n-1$ proves
\eqref{eq:CountSplittableFirstRows}.

By Lemma~\ref{L:SplittableAutomaticallyCriticalFinal}, every row~$u$
counted by $U_n(v)$ also satisfies $u\not\triangleleft v$. Thus, for
a fixed middle row~$v$, the choices of~$u$ and~$t$ are independent,
and $U_n(v)T_n(v)$ is exactly the number of splittable critical
triples with middle row~$v$. Summing over $v\in\Rr_n$ proves
\eqref{eq:CountSplittableFinal}.
\end{proof}

\begin{example}
For $1\leq n\leq6$, the numbers of critical confluences, eliminated
confluences, and surviving generating $3$-cells are as follows:
\[
\begin{array}{|c|r|r|r|}
\hline
n
&
|\mathfrak C_n|
&
|\mathfrak S_n|
&
\bigl|\Sigma^{\mathrm{row,scoh}}_3(n)\bigr|
\\ \hline
1&1&0&1\\
2&21&5&16\\
3&260&111&149\\
4&2635&1506&1129\\
5&24276&16526&7750\\
6&212471&162021&50450\\
\hline
\end{array}
\]
\end{example}

\subsection{Second homotopical reduction: elimination of row relations}
\label{SS:SecondReductionRow}

By Theorem~\ref{T:FirstSplittableReductionFinal}, the generating
confluences that remain after the first reduction are indexed by the
nonsplittable critical triples. Set~$
\mathfrak T_n^{(1)}
:=
\mathfrak C_n\setminus\mathfrak S_n$.
For $\tau=(u,v,t)$ in~$\mathfrak T_n^{(1)}$, write~$
\mathcal X_\tau:=\mathcal X_{u,v,t}$,~$L_\tau:=s_2(\mathcal X_\tau)$ and~$
R_\tau:=t_2(\mathcal X_\tau)$.
Thus~$\mathcal X_\tau\colon L_\tau\Rrightarrow R_\tau$, where~$L_\tau$ and~$R_\tau$ are the two normalization paths fixed in
Subsubsection~\ref{SSS:ExplicitGeneratingConfluences}.

The second reduction concerns the generating row $2$-cells occurring
in empty context in these boundary paths. We first determine the
eligible cells and the dependencies between their eliminations.

\subsubsection{Eligible pairs and dependencies}

Let~$\beta=\alpha_{p,q}\colon
r_pr_q\dfl\Rr(N(pq))$ be a generating row $2$-cell. An occurrence of~$\beta$ in a rewriting
path is said to be \emph{in empty context} if it has no nonempty left
or right whiskering context. Equivalently, the corresponding rewriting
step has source exactly~$r_pr_q$.

Let~$\tau=(u,v,t)$ be in~$\mathfrak T_n^{(1)}$. A pair~$(\tau,\beta)$ is
called \emph{eligible} if:
\begin{enumerate}[\bf i)]
\item $\beta$ occurs in empty context in~$L_\tau$ or~$R_\tau$;
\item after evaluating all row indices, this is the unique occurrence
of~$\beta$ in the two boundary paths~$L_\tau$ and~$R_\tau$, including
occurrences in nonempty context.
\end{enumerate}

The finite set of eligible pairs is denoted by
$\operatorname{Elig}^{(2)}_n$. 

\begin{lemma}
\label{L:EligiblePairGivesCollapsibleCell}
For every~$(\tau,\beta)$ in~$\operatorname{Elig}^{(2)}_n$, a Nielsen
transformation in dimension~$3$ replaces~$\mathcal X_\tau$ by a
generating $3$-cell
\[
A_{\tau,\beta}\colon
S_{\tau,\beta}\Rrightarrow\beta
\]
whose source contains neither~$\beta$ nor its inverse.
\end{lemma}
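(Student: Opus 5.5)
Since $(\tau,\beta)$ is eligible, $\beta$ occurs exactly once in the $2$-sphere $(L_\tau,R_\tau)$, and this occurrence is in empty context. Up to replacing $\mathcal X_\tau$ by its inverse, we may assume that the occurrence lies in $L_\tau$. The boundary paths are composites of rewriting steps, so we can write
\[
L_\tau=h\star_1\beta\star_1k,
\]
where $h$ and $k$ are $2$-cells of $\Sigma_2^\ast$. By the uniqueness clause~\textbf{ii)}, neither $h$, $k$ nor $R_\tau$ contains $\beta$. Because $\beta$ appears with no left or right whiskering, it really is a factor of the $1$-composite and not hidden inside a $0$-composite. So $h$ is a path $r_ur_vr_t\dfl r_pr_q$ and $k$ is a path $\Rr(N(pq))\dfl\operatorname{NF}(u,v,t)$. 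In the diagrams of Subsubsection~\ref{SSS:ExplicitGeneratingConfluences}, a step in empty context can only be a step on a word of length two. These are $\alpha_{b,t}$ in cases $1$, $2$, the steps $\alpha_{u,d}$ and $\alpha_{g,d}$, and the initial steps when one factor is omitted. Each case is routine.

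We then set
\[
B:=1_{h^{-1}}\star_1\mathcal X_\tau\star_1 1_{k^{-1}}.
\]
In the $(3,1)$-category $\tck{\Sigma}_3$ the $2$-cells $h$ and $k$ are invertible, so $B$ is a $3$-cell
\[
B\colon h^{-1}\star_1 L_\tau\star_1 k^{-1}\Rrightarrow h^{-1}\star_1R_\tau\star_1k^{-1}.
\]
Its source is $h^{-1}\star_1 h\star_1\beta\star_1 k\star_1 k^{-1}$, which equals $\beta$ in the track $2$-category. So $B^{-1}$ is a $3$-cell from $S_{\tau,\beta}:=h^{-1}\star_1R_\tau\star_1k^{-1}$ to $\beta$. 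This matches the form $1_h\star_1A^{\varepsilon}\star_1 1_k$ required by the Nielsen transformation in dimension~$3$ recalled in Subsection~\ref{SS:HomotopicalReductionProcedure}. We adjoin a generating $3$-cell $A_{\tau,\beta}$ parallel to $B^{-1}$, together with the collapsible $3$-sphere $(B^{-1},A_{\tau,\beta})$. Then $\mathcal X_\tau$ is recovered as
\[
1_h\star_1A_{\tau,\beta}^{-1}\star_1 1_k,
\]
so it is redundant and can be eliminated. The result is again a coherent presentation of $\Styl_n$ by \cite[Theorem~2.1.3]{GaussentGuiraudMalbos15}.

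Finally we check that $S_{\tau,\beta}$ contains neither $\beta$ nor $\beta^{-1}$. This follows from clause~\textbf{ii)}: $\beta$ does not occur in $R_\tau$, in $h$ or in $k$, so it does not occur in their inverses either. The one delicate point is that occurrences must be counted after evaluating the row indices. Two differently labelled steps in the diagrams, such as $\alpha_{a,e}$ and $\alpha_{u,c}$ in a degenerate case like $4\mathrm b1$ where $i=a$, may coincide as generators. Eligibility already excludes this, since it asks for uniqueness after evaluation. I expect the main obstacle to be verifying carefully that $\beta$ is a genuine $1$-composition factor of $L_\tau$, that is, that the empty-context occurrence is not the $0$-composite of $\beta$ with an identity on an omitted row $r_\varepsilon$. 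I would handle this by observing that omitted factors are empty words, so $1_{r_\varepsilon}\beta=\beta$ literally.
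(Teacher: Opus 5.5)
Your proof is correct and is essentially the paper's argument: you write the boundary path containing $\beta$ as $f\star_1\beta\star_1 g$, whisker $\mathcal X_\tau^{\pm1}$ by $1_{f^{-1}}$ and $1_{g^{-1}}$ to obtain a $3$-cell $S_{\tau,\beta}\Rrightarrow\beta$, adjoin a parallel generator $A_{\tau,\beta}$ so that $\mathcal X_\tau$ becomes redundant, and then use eligibility together with the positivity of $f$, $g$ and the other boundary path to exclude both $\beta$ and $\beta^{-1}$ from the source. Two of your side remarks are inaccurate but play no role in the argument: the initial steps $\alpha_{u,v}1_{r_t}$ and $1_{r_u}\alpha_{v,t}$ never occur in empty context, since $u$ and $t$ are nonempty rows, and the diagram of case $4\mathrm b1$ contains no step $\alpha_{a,e}$.
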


\begin{proof}
Let~$U$ be the boundary path containing the unique occurrence
of~$\beta$, and let~$V$ be the other boundary path. Since this
occurrence is in empty context, there exist rewriting paths~$f$ and~$g$
such that~$U=f\star_1\beta\star_1g$.
Let~$\mathcal Y_{\tau,\beta}\colon V\Rrightarrow U$
be either~$\mathcal X_\tau$ or its inverse, according to the
orientation of its boundary. Set~$
\widetilde A_{\tau,\beta}
:=
1_{f^{-1}}
\star_1
\mathcal Y_{\tau,\beta}
\star_1
1_{g^{-1}}$
and~$
S_{\tau,\beta}
:=
f^{-1}\star_1V\star_1g^{-1}$.
The source of~$\widetilde A_{\tau,\beta}$ is~$S_{\tau,\beta}$, while
its target is
\[
f^{-1}\star_1U\star_1g^{-1}
=
f^{-1}\star_1f\star_1
\beta
\star_1g\star_1g^{-1}
=
\beta.
\]
Thus~$\widetilde A_{\tau,\beta}\colon
S_{\tau,\beta}\Rrightarrow\beta$.
Adjoin a generating $3$-cell~$A_{\tau,\beta}\colon
S_{\tau,\beta}\Rrightarrow\beta$ parallel to~$\widetilde A_{\tau,\beta}$. Then
$\mathcal Y_{\tau,\beta}$ is parallel to~$1_f\star_1A_{\tau,\beta}\star_1 1_g$.
Hence $\mathcal X_\tau$ is redundant and may be eliminated. This gives
the required Nielsen transformation.

By eligibility, none of~$f$, $g$, and~$V$ contains~$\beta$. Since
$f$, $g$, and~$V$ are rewriting paths, only~$f^{-1}$ and~$g^{-1}$
contain inverse generating cells. Moreover, since neither~$f$ nor~$g$
contains~$\beta$, their inverses do not contain the inverse of~$\beta$.
It follows that~$S_{\tau,\beta}$ contains neither~$\beta$ nor its
inverse.
\end{proof}

A generating row $2$-cell~$\gamma$ is called a \emph{dependency} of
$(\tau,\beta)$ if~$\gamma$ or its inverse occurs in
$S_{\tau,\beta}$, possibly in nonempty context.

\begin{proposition}
\label{P:ExactNielsenSourcesByCase}
The possible eligible pairs and their dependencies are given by the
following table:
\[
\begin{array}{|c|c|l|}
\hline
\text{type}
& \beta
& \text{dependencies in }S_{\tau,\beta}
\\ \hline
1
& \alpha_{b,t}
& \alpha_{u,v},\ \alpha_{v,t},\ \alpha_{u,d}
\\
1
& \alpha_{u,d}
& \alpha_{u,v},\ \alpha_{v,t},\ \alpha_{b,t}
\\
2\mathrm a
& \alpha_{b,t}
& \alpha_{u,v},\ \alpha_{v,t},\ \alpha_{u,c}
\\
2\mathrm b
& \alpha_{b,t}
& \alpha_{u,v},\ \alpha_{v,t},\ \alpha_{u,c},\ \alpha_{g,d}
\\
2\mathrm b
& \alpha_{g,d}
& \alpha_{u,v},\ \alpha_{v,t},\ \alpha_{u,c},\ \alpha_{b,t}
\\
3\mathrm{a2}
& \alpha_{u,d}
& \alpha_{u,v},\ \alpha_{v,t},\ \alpha_{b,t}
\\
3\mathrm{b2}
& \alpha_{u,d}
& \alpha_{u,v},\ \alpha_{v,t},\ \alpha_{b,t},\ \alpha_{a,e}
\\
4\mathrm{a2}
& \alpha_{g,d}
& \alpha_{u,v},\ \alpha_{v,t},\ \alpha_{u,c},\ \alpha_{b,t},\
  \alpha_{a,e}
\\ \hline
\end{array}
\]
No other confluence type contributes an eligible pair.
\end{proposition}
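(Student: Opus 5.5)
The plan is a case-by-case inspection of the thirteen confluence diagrams of Subsubsection~\ref{SSS:ExplicitGeneratingConfluences}, restricted to nonsplittable triples. For each diagram, first list the rewriting steps occurring in empty context in $L_\tau$ and $R_\tau$. A step has empty context exactly when its source is a word of two row generators. Then check condition \textbf{ii)}: the cell must not occur elsewhere in the two boundaries, once all row indices are evaluated using the identities recorded in the case tables. Finally, for each eligible pair $(\tau,\beta)$, read off the dependencies. By the construction of Lemma~\ref{L:EligiblePairGivesCollapsibleCell}, $S_{\tau,\beta}=f^{-1}\star_1V\star_1g^{-1}$, so the dependencies are exactly the generating cells of the two boundaries other than $\beta$.

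The first step treats the cases in which the source word has three generators. There the initial steps $\alpha_{u,v}1_{r_t}$ and $1_{r_u}\alpha_{v,t}$ are never in empty context. A second step is in empty context only when its source has length two, which is precisely when $a=\varepsilon$ (for the left branch) or $c=\varepsilon$ together with $i=\varepsilon$ (for the right branch). Checking the diagrams then gives the candidates:
\begin{itemize}
\item Case~$1$: $\alpha_{b,t}$ and $\alpha_{u,d}$;
\item Case~$2\mathrm a$: $\alpha_{b,t}$;
\item Case~$2\mathrm b$: $\alpha_{b,t}$ and $\alpha_{g,d}$;
\item Cases~$3\mathrm{a2}$ and $3\mathrm{b2}$: $\alpha_{u,d}$;
\item Case~$4\mathrm{a2}$: $\alpha_{g,d}$.
\end{itemize}
In Cases~$3\mathrm{a1}$, $3\mathrm{b1}$, $4\mathrm{a1}$ and $4\mathrm{b}*$, every step carries a nonempty whisker ($1_{r_a}$, $1_{r_f}$, $1_{r_d}$ or $1_{r_i}$). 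So these cases contribute nothing, which proves the final sentence of the statement.

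The second step verifies uniqueness for each listed candidate, i.e.\ that $\beta$ differs from every other cell in the same confluence. For example, in Case~$1$ one needs $\alpha_{b,t}\neq\alpha_{u,v},\alpha_{v,t},\alpha_{u,d}$. The rows determine the indices: $b=u\cup v$ has $\supp(b)\supseteq\supp(u)$, and equality $b=u$ together with $t=v$ would force $\alpha_{u,v}=\alpha_{v,t}$-type coincidences that can be tested directly. If a coincidence $(b,t)=(u,v)$ or $(b,t)=(v,t)$ can actually occur, then that $\tau$ simply fails eligibility. Such degenerate instances must be excluded from the eligible set, or shown impossible, before the table is read as listing types. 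I would settle this with the support and minimum identities of Lemmas~\ref{L:BottomRowUnion}, \ref{L:OneRowCriterion} and~\ref{L:TopRowInclusion}.

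The third step reads the dependency column. It is the multiset of cells of $L_\tau\cup R_\tau$ minus $\beta$, and since whiskering does not change the underlying generating cell, the table follows immediately. For instance, in Case~$4\mathrm{a2}$ the cells are $\alpha_{u,v},\alpha_{b,t},\alpha_{a,e}$ on the left and $\alpha_{v,t},\alpha_{u,c},\alpha_{g,d}$ on the right, which gives the listed row. The main obstacle is the exhaustiveness of the empty-context analysis in the second-step bookkeeping. I would handle it by the simple length criterion above (source of length two if and only if the preceding top row is $\varepsilon$), applied uniformly across the case tables, rather than diagram by diagram.
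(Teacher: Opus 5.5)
Your proposal is correct and is essentially the paper's own argument: the paper also obtains the table by inspecting the thirteen confluence diagrams, with Lemma~\ref{L:EligiblePairGivesCollapsibleCell} identifying the dependencies as the cells of $L_\tau$ and $R_\tau$ other than the unique occurrence of $\beta$. Two small remarks. First, degenerate coincidences cannot be shown impossible: for instance, the nonsplittable Case~$1$ triple $\tau=(1,1,1)$ has $\alpha_{1,1}$ in all four steps. The paper therefore takes your first option and lets a line apply only when eligibility actually holds. Second, your right-branch criterion should read ``$c=\varepsilon$ for the second step, or $c\neq\varepsilon$ and $i=\varepsilon$ for the third step'', which is what your candidate list actually uses.
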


\begin{proof}
By Lemma~\ref{L:EligiblePairGivesCollapsibleCell}, the dependencies
of~$(\tau,\beta)$ are precisely the generating row $2$-cells occurring
in the two boundary paths of~$\mathcal X_\tau$, except for the unique
occurrence of~$\beta$. Inspection of the thirteen confluence diagrams of Subsubsection~\ref{SSS:ExplicitGeneratingConfluences} gives the table.
\end{proof}

All indices in the table are evaluated using the rows attached to
$\tau$. A line of the table applies only when the displayed pair
$(\tau,\beta)$ satisfies the eligibility condition. If two listed
dependencies become equal after evaluation, they define the same
dependency.

\subsubsection{Deterministic matching of eligible pairs}
Order the generating row $2$-cells
lexicographically by
\[
\bigl(\ell(u)+\ell(v),\ell(u),u,v\bigr),
\]
where rows are compared lexicographically and a proper prefix is smaller.
Order critical triples
lexicographically by their three rows, and order eligible pairs
lexicographically by~$(\beta,\tau)$.
View~$\operatorname{Elig}^{(2)}_n$ as the edge set of the bipartite
graph with vertex sets
$\mathfrak T_n^{(1)}$ and~$\Sigma^{\mathrm{row}}_2(n)$. Scan its
edges in the order above and retain an edge~$(\tau,\beta)$ precisely
when neither endpoint belongs to a previously retained edge. This
defines a deterministic matching~$\mathcal M_n^{(2)}
\subseteq
\operatorname{Elig}^{(2)}_n$.
Thus every critical triple and every generating row $2$-cell occurs
in at most one matched pair.

Set
\[
\mathcal B_n^{\mathrm{match}}
:=
\left\{
\beta
\ \middle|\
(\tau,\beta)\in\mathcal M_n^{(2)}
\text{ for some }\tau
\right\}.
\]
For~$\beta$ in~$\mathcal B_n^{\mathrm{match}}$, let~$\tau_\beta$ be its
unique matched triple and set~$
S_\beta
:=
S_{\tau_\beta,\beta}$ and~$
A_\beta
:=
A_{\tau_\beta,\beta}
\colon
S_\beta\Rrightarrow\beta$.

Let~$G_n^{(2)}$ be the directed graph with vertex set
$\mathcal B_n^{\mathrm{match}}$. For
$\beta,\beta'\in\mathcal B_n^{\mathrm{match}}$, it has an edge
\[
\beta'\longrightarrow\beta
\]
whenever~$\beta'$ is a dependency of~$(\tau_\beta,\beta)$. Thus an
edge~$\beta'\to\beta$ means that the source~$S_\beta$ depends
on~$\beta'$. In the order used for homotopical reduction, $\beta'$
must therefore be placed below~$\beta$.
By Lemma~\ref{L:EligiblePairGivesCollapsibleCell}, the source
$S_\beta$ contains neither~$\beta$ nor its inverse. Hence
$G_n^{(2)}$ has no loops. By
Proposition~\ref{P:ExactNielsenSourcesByCase}, its edges are obtained
from the dependency table after evaluating all row indices and
retaining only the matched row cells.
Scan the vertices of~$\mathcal B_n^{\mathrm{match}}$ in the fixed
row-cell order. Starting with the empty set, retain a vertex~$\beta$
precisely when the subgraph induced by the previously retained
vertices together with~$\beta$ is acyclic. Denote the resulting set by~$\mathcal B_n^{\mathrm{acyc}}
\subseteq
\mathcal B_n^{\mathrm{match}}$.
By construction,~$
G_n^{(2)}
\bigl[
\mathcal B_n^{\mathrm{acyc}}
\bigr]$
is a finite directed acyclic graph and therefore admits a topological
order compatible with all its dependencies. Dependencies outside
$\mathcal B_n^{\mathrm{acyc}}$ will later be placed below all selected
row cells.

\begin{remark}
The full graph~$G_n^{(2)}$ need not be acyclic. The deterministic
selection of~$\mathcal B_n^{\mathrm{acyc}}$ discards vertices as
needed so that the induced subgraph is acyclic.
\end{remark}

\subsubsection{The reduced coherent row presentation}

Let~$\Sigma^{\mathrm{row,N}}(n)$ be obtained from
$\Sigma^{\mathrm{row,scoh}}(n)$ by replacing, for every
$\beta\in\mathcal B_n^{\mathrm{acyc}}$,
\[
\mathcal X_{\tau_\beta}
\rightsquigarrow
A_\beta\colon
S_\beta\Rrightarrow\beta.
\]
Since the triples~$\tau_\beta$ are pairwise distinct, these Nielsen
transformations involve distinct generating $3$-cells
$\mathcal X_{\tau_\beta}$. They may therefore be performed successively
in any order. They preserve coherence and leave the underlying
$2$-polygraph unchanged.

Set
\[
\Gamma_3^{(2)}(n)
:=
\left\{
A_\beta
\ \middle|\
\beta\in\mathcal B_n^{\mathrm{acyc}}
\right\},
\qquad
\Gamma^{(2)}(n)
:=
\bigl(
\varnothing,
\Gamma_3^{(2)}(n),
\varnothing
\bigr).
\]

\begin{proposition}
\label{P:SecondAcyclicFamilyCollapsible}
The triple~$\Gamma^{(2)}(n)$ is a collapsible part of
$\Sigma^{\mathrm{row,N}}(n)$.
\end{proposition}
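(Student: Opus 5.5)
We verify the three conditions defining a collapsible part for $\Gamma^{(2)}(n)=(\varnothing,\Gamma_3^{(2)}(n),\varnothing)$ in $\Sigma^{\mathrm{row,N}}(n)$. Since $\Gamma_2$ and $\Gamma_4$ are empty, condition~\textbf{ii)} is automatic: no element of $\Gamma_2$ can be a redundant target of an element of $\Gamma_3$, and no element of $\Gamma_3$ is targeted by an element of $\Gamma_4$. The work therefore concentrates on conditions~\textbf{i)} and~\textbf{iii)}.

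For condition~\textbf{i)}, fix $\beta\in\mathcal B_n^{\mathrm{acyc}}$. By construction of $\Sigma^{\mathrm{row,N}}(n)$, the generating $3$-cell $A_\beta\colon S_\beta\Rrightarrow\beta$ is present, and its target $\beta$ is a generating $2$-cell of the underlying $2$-polygraph $\Sigma^{\mathrm{row}}(n)$, which the Nielsen transformations leave unchanged. By Lemma~\ref{L:EligiblePairGivesCollapsibleCell}, applied to the matched pair $(\tau_\beta,\beta)\in\mathcal M_n^{(2)}\subseteq\operatorname{Elig}^{(2)}_n$, the source $S_\beta$ contains neither $\beta$ nor $\beta^{-1}$. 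Hence $S_\beta$ lies in the free $(2,1)$-category generated without $\beta$, so $A_\beta$ is collapsible with redundant cell $\beta$. Distinct $\beta$ have distinct triples $\tau_\beta$, because $\mathcal M_n^{(2)}$ is a matching. Thus the cells $A_\beta$ are pairwise distinct generators, and their redundant targets $\beta$ are pairwise distinct as well.

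For condition~\textbf{iii)}, we take arbitrary well-founded orders on the $1$-cells and $3$-cells, since $\Gamma_4$ is empty. All the work is in the order on generating $2$-cells. Fix a topological order $\preceq_{\mathrm{top}}$ of the finite acyclic digraph $G_n^{(2)}[\mathcal B_n^{\mathrm{acyc}}]$, so that every edge $\beta'\to\beta$ gives $\beta'\prec_{\mathrm{top}}\beta$. We then define a total order on $\Sigma^{\mathrm{row}}_2(n)$ by the following rules:
\begin{enumerate}[\bf a)]
\item every cell outside $\mathcal B_n^{\mathrm{acyc}}$ is placed below every cell of $\mathcal B_n^{\mathrm{acyc}}$, with the cells outside ordered arbitrarily among themselves;
\item within $\mathcal B_n^{\mathrm{acyc}}$, the order is $\preceq_{\mathrm{top}}$.
\end{enumerate}
This order is well founded because the set is finite. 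Now let $\gamma$ be a generating $2$-cell occurring, possibly inverted and possibly in context, in $S_\beta$. By definition $\gamma$ is a dependency of $(\tau_\beta,\beta)$, and $\gamma\neq\beta$. If $\gamma\notin\mathcal B_n^{\mathrm{acyc}}$, then $\gamma\prec\beta$ by rule~\textbf{a)}. If $\gamma\in\mathcal B_n^{\mathrm{acyc}}\subseteq\mathcal B_n^{\mathrm{match}}$, then by definition of $G_n^{(2)}$ there is an edge $\gamma\to\beta$ in the induced subgraph, so $\gamma\prec_{\mathrm{top}}\beta$. In both cases $\beta$ is strictly greater than every generating $2$-cell of its source, which establishes~\textbf{iii)}.

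The main point to check carefully is the case $\gamma\in\mathcal B_n^{\mathrm{acyc}}$. Here one must make sure that every dependency in the sense of Proposition~\ref{P:ExactNielsenSourcesByCase}, after evaluating the row indices, actually yields an edge of $G_n^{(2)}$. This holds because the edges are defined exactly from the occurrences in $S_{\tau_\beta,\beta}$. One must also confirm that replacing some $\mathcal X_\tau$ by $A_\beta$ does not alter the sources $S_{\beta'}$ of the other cells, since these are $2$-cells built only from $2$-cell boundaries. With these checks in place, the triple $\Gamma^{(2)}(n)$ is a collapsible part, and $R_{\Gamma^{(2)}(n)}$ is well defined.
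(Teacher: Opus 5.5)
Your proposal is correct and follows the paper's proof essentially step for step. You get condition \textbf{i)} from Lemma~\ref{L:EligiblePairGivesCollapsibleCell}, condition \textbf{ii)} from $\Gamma_2^{(2)}(n)=\Gamma_4^{(2)}(n)=\varnothing$, and condition \textbf{iii)} from a topological order on $G_n^{(2)}[\mathcal B_n^{\mathrm{acyc}}]$, extended by placing every unselected row cell below the selected ones; your extra remarks on the distinctness of the cells $A_\beta$ and on the Nielsen transformations leaving the $2$-dimensional sources unchanged are harmless details the paper leaves implicit.
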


\begin{proof}
Let~$\beta\in\mathcal B_n^{\mathrm{acyc}}$. The target of~$A_\beta$
is the generating row $2$-cell~$\beta$. By
Lemma~\ref{L:EligiblePairGivesCollapsibleCell}, its source~$S_\beta$
contains neither~$\beta$ nor its inverse. Hence~$A_\beta$ is
collapsible, and condition~\textup{\textbf{i)}} holds.
Condition~\textup{\textbf{ii)}} is automatic because~$\Gamma_2^{(2)}(n)=\Gamma_4^{(2)}(n)=\varnothing$.
For condition~\textup{\textbf{iii)}}, only an order on the generating
$2$-cells is relevant. Since~$
G_n^{(2)}
\bigl[
\mathcal B_n^{\mathrm{acyc}}
\bigr]$
is a finite directed acyclic graph, choose a topological order
$\prec_2$ on~$\mathcal B_n^{\mathrm{acyc}}$ such that
\[
\beta'\longrightarrow\beta
\quad\text{ implies }\quad
\beta'\prec_2\beta.
\]
Extend~$\prec_2$ to a total order on all generating row $2$-cells by
placing every cell outside~$\mathcal B_n^{\mathrm{acyc}}$ below every
selected cell.
Let~$\gamma$ be a generating row $2$-cell such that~$\gamma$ or its
inverse occurs in~$S_\beta$. If
$\gamma\in\mathcal B_n^{\mathrm{acyc}}$, then
$\gamma\to\beta$ is an edge of the dependency graph, and hence
$\gamma\prec_2\beta$. If
$\gamma\notin\mathcal B_n^{\mathrm{acyc}}$, the same inequality follows
from the placement of the unselected cells. Thus~$\beta$ is strictly
greater than every generating $2$-cell occurring in~$S_\beta$.
The set of generating row $2$-cells is finite, so this order is well
founded. Choose arbitrary well-founded orders on the generating
$1$-cells and $3$-cells. Condition~\textup{\textbf{iii)}} follows, and
therefore~$\Gamma^{(2)}(n)$ is a collapsible part.
\end{proof}

\begin{theorem}
\label{T:SecondReductionRowRed}
For~$n\geq1$, the homotopical reduction
\[
\Sigma^{\mathrm{row,red}}(n)
:=
R_{\Gamma^{(2)}(n)}
\bigl(\Sigma^{\mathrm{row,N}}(n)\bigr)
\]
is a finite coherent presentation of~$\Styl_n$. Its generating cells are
\[
\Sigma^{\mathrm{row,red}}_2(n)
=
\Sigma^{\mathrm{row}}_2(n)
\setminus\mathcal B_n^{\mathrm{acyc}},
\qquad
\Sigma^{\mathrm{row,red}}_3(n)
=
\left\{
\mathcal X_\tau
\ \middle|\
\tau\in\mathfrak T_n^{(1)}
\setminus
\{\tau_\beta\mid\beta\in\mathcal B_n^{\mathrm{acyc}}\}
\right\},
\]
where the boundaries of the surviving generating $3$-cells are reduced
by~$R_{\Gamma^{(2)}(n)}$.
\end{theorem}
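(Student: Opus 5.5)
The plan is to obtain the theorem by composing the two coherent Tietze equivalences already constructed. We then read off the surviving cells from the definition of the reduction $R_\Gamma$.

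First, by Theorem~\ref{T:FirstSplittableReductionFinal}, $\Sigma^{\mathrm{row,scoh}}(n)$ is a finite coherent presentation of $\Styl_n$. Its underlying $2$-polygraph is $\Sigma^{\mathrm{row}}(n)$, and its generating $3$-cells are the $\mathcal X_\tau$ for $\tau\in\mathfrak T_n^{(1)}$.

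Next we pass to $\Sigma^{\mathrm{row,N}}(n)$ by the Nielsen transformations of Lemma~\ref{L:EligiblePairGivesCollapsibleCell}, one for each $\beta\in\mathcal B_n^{\mathrm{acyc}}$. Each such step is a composite of elementary coherent Tietze transformations: we adjoin $A_\beta$ together with the collapsible $3$-sphere $(\widetilde A_{\tau_\beta,\beta},A_\beta)$, and then eliminate $\mathcal X_{\tau_\beta}$, which has become redundant. By \cite[Theorem~2.1.3]{GaussentGuiraudMalbos15}, each step preserves the presented monoid and coherence. Since the matching $\mathcal M_n^{(2)}$ makes the triples $\tau_\beta$ pairwise distinct, the steps act on distinct generators and can be performed successively. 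The $2$-polygraph is untouched. Hence $\Sigma^{\mathrm{row,N}}(n)$ is a finite coherent presentation with the same $2$-cells, and its $3$-cells are the $\mathcal X_\tau$ with $\tau\notin\{\tau_\beta\}$ together with the $A_\beta$.

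Then we apply Proposition~\ref{P:SecondAcyclicFamilyCollapsible}. Since $\Gamma^{(2)}(n)$ is a collapsible part, the homotopical reduction $R_{\Gamma^{(2)}(n)}$ is a Tietze transformation \cite[Section~2.3.1]{GaussentGuiraudMalbos15}. The reduced polygraph therefore presents $\Styl_n$ and is coherent, since Tietze transformations carry homotopy bases to homotopy bases.

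By the description of $R_\Gamma$, we remove the cells of $\Gamma_3^{(2)}(n)=\{A_\beta\}$ and their redundant targets, namely the row cells $\beta\in\mathcal B_n^{\mathrm{acyc}}$. No $1$-cells are removed, since $\Gamma_2$ is empty, and no further $3$-cells are removed, since $\Gamma_4$ is empty. This yields exactly the stated sets $\Sigma^{\mathrm{row,red}}_2(n)$ and $\Sigma^{\mathrm{row,red}}_3(n)$. The remaining $3$-cells keep their names, but their boundaries are rewritten by $R_{\Gamma^{(2)}(n)}$, which sends each $\beta$ recursively to $R(S_\beta)$; the recursion is well founded by the topological order $\prec_2$. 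Finiteness is inherited from $\Sigma^{\mathrm{row,coh}}(n)$.

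The only delicate point is the well-definedness of $R_{\Gamma^{(2)}(n)}$ on boundaries when several selected $\beta$'s appear in each other's sources. This is exactly what acyclicity of $G_n^{(2)}[\mathcal B_n^{\mathrm{acyc}}]$ guarantees, and Proposition~\ref{P:SecondAcyclicFamilyCollapsible} already settles it. So the main care needed is simply to check that the Nielsen step is a legitimate coherent Tietze transformation. That check uses only the invertibility of the whiskering $2$-cells $f$ and $g$.
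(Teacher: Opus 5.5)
Your proposal is correct and follows the same route as the paper: coherence of $\Sigma^{\mathrm{row,scoh}}(n)$ from Theorem~\ref{T:FirstSplittableReductionFinal}, preservation of coherence under the Nielsen transformations (which leave the underlying $2$-polygraph unchanged), and then Proposition~\ref{P:SecondAcyclicFamilyCollapsible} to apply the homotopical reduction. Your explicit identification of the surviving $2$- and $3$-cells spells out what the paper leaves implicit; it uses $\Gamma_2^{(2)}(n)=\Gamma_4^{(2)}(n)=\varnothing$ and the pairwise distinctness of the matched triples $\tau_\beta$.
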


\begin{proof}
By Theorem~\ref{T:FirstSplittableReductionFinal},
$\Sigma^{\mathrm{row,scoh}}(n)$ is a finite coherent presentation of
$\Styl_n$. The Nielsen transformations defining
$\Sigma^{\mathrm{row,N}}(n)$ preserve coherence and leave the
underlying $2$-polygraph unchanged. By
Proposition~\ref{P:SecondAcyclicFamilyCollapsible},
$\Gamma^{(2)}(n)$ is a collapsible part. Homotopical reduction
therefore preserves coherence and the presented monoid. All the sets
involved are finite, so the Nielsen transformations and the
homotopical reduction preserve finiteness. Hence
$\Sigma^{\mathrm{row,red}}(n)$ is a finite coherent presentation of
$\Styl_n$.
\end{proof}

\subsubsection{Cell counts}

We conclude by recording the numbers of generating cells remaining
after the two homotopical reductions.

\begin{proposition}
\label{P:CountsAfterSecondReduction}
For every~$n\geq1$, the numbers of generating cells of
$\Sigma^{\mathrm{row,red}}(n)$ are given by
\[
\bigl|\Sigma^{\mathrm{row,red}}_1(n)\bigr|
=
2^n-1,
\qquad
\bigl|\Sigma^{\mathrm{row,red}}_2(n)\bigr|
=
\frac{
2^{2n+1}-2^{n+1}-3^n+1
}{2}
-
\bigl|\mathcal B_n^{\mathrm{acyc}}\bigr|,
\]
\[
\bigl|\Sigma^{\mathrm{row,red}}_3(n)\bigr|
=
\frac{
6\cdot8^n-6^{n+1}-5\cdot4^n+3^{n+1}
+3\cdot2^n-1
}{6}
-
\bigl|\mathfrak S_n\bigr|
-
\bigl|\mathcal B_n^{\mathrm{acyc}}\bigr|.
\]
\end{proposition}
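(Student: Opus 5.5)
The plan is pure bookkeeping. We read off the generating sets from Theorem~\ref{T:SecondReductionRowRed} and subtract cardinalities from the counts in Proposition~\ref{Prop:CountingCells}. Neither reduction touches generating $1$-cells. $\Gamma^{\mathrm{spl}}(n)$ has empty $\Gamma_2$, and $\Gamma^{(2)}(n)$ has empty $\Gamma_2$ with its $\Gamma_3$ targeting $2$-cells, so no $1$-cell is redundant. The Nielsen transformations in dimension~$3$ change no $1$-cells either. Hence $\Sigma^{\mathrm{row,red}}_1(n)=\Sigma^{\mathrm{row}}_1(n)$, and \eqref{eq:CountRowCellsOne} gives $2^n-1$.

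For $2$-cells, Theorem~\ref{T:SecondReductionRowRed} gives $\Sigma^{\mathrm{row,red}}_2(n)=\Sigma^{\mathrm{row}}_2(n)\setminus\mathcal B_n^{\mathrm{acyc}}$. Since $\mathcal B_n^{\mathrm{acyc}}\subseteq\mathcal B_n^{\mathrm{match}}\subseteq\Sigma^{\mathrm{row}}_2(n)$, the cardinality is the difference. Substituting \eqref{eq:CountRowCellsTwo} gives the stated formula. The first reduction does not alter the $2$-cells, by Theorem~\ref{T:FirstSplittableReductionFinal}.

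For $3$-cells, I chain three facts. First, $|\Sigma^{\mathrm{row}}_3(n)|=|\mathfrak C_n|$ is given by \eqref{eq:CountRowCellsThree}. Second, Theorem~\ref{T:FirstSplittableReductionFinal} shows the surviving set is indexed by $\mathfrak T^{(1)}_n=\mathfrak C_n\setminus\mathfrak S_n$ with $\mathfrak S_n\subseteq\mathfrak C_n$, so it has $|\mathfrak C_n|-|\mathfrak S_n|$ elements. Third, Theorem~\ref{T:SecondReductionRowRed} removes the cells indexed by $\{\tau_\beta\mid\beta\in\mathcal B_n^{\mathrm{acyc}}\}\subseteq\mathfrak T^{(1)}_n$.

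The only point needing care, and the main step, is that $\beta\mapsto\tau_\beta$ is injective on $\mathcal B_n^{\mathrm{acyc}}$. This is why $|\mathcal B_n^{\mathrm{acyc}}|$, rather than something smaller, is subtracted. Injectivity follows from the construction of $\mathcal M^{(2)}_n$ as a matching in the bipartite graph of eligible pairs: a retained edge is added only if neither endpoint lies in a previously retained edge. So each critical triple is matched to at most one row $2$-cell, and distinct $\beta$ have distinct $\tau_\beta$. Also, each $\mathcal X_{\tau_\beta}$ is removed exactly once. The Nielsen step replaces it by $A_\beta$, which is bijective on generators, and the homotopical reduction removes $A_\beta$. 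The remaining $3$-cells keep their identity and only have their boundaries reduced by $R_{\Gamma^{(2)}(n)}$. Subtracting $|\mathcal B_n^{\mathrm{acyc}}|$ then gives the displayed formula.
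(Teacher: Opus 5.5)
Your proposal is correct and follows essentially the same bookkeeping as the paper. You subtract $|\mathfrak S_n|$ and $|\mathcal B_n^{\mathrm{acyc}}|$ from the counts in Proposition~\ref{Prop:CountingCells}, and you use the matching property (distinct $2$-cell targets and distinct triples $\tau_\beta$) to justify that exactly $|\mathcal B_n^{\mathrm{acyc}}|$ cells are removed in each of dimensions $2$ and $3$, as the paper does.
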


\begin{proof}
The first reduction removes exactly the generating $3$-cells indexed
by~$\mathfrak S_n$ and leaves the underlying $2$-polygraph unchanged.
The Nielsen transformations preserve the number of generating
$3$-cells.

For every~$\beta\in\mathcal B_n^{\mathrm{acyc}}$, the second reduction
removes the generating $3$-cell~$A_\beta$ and its target row
$2$-cell~$\beta$. Since the matching has distinct $2$-cell targets and
distinct $3$-cell generators, exactly
$\bigl|\mathcal B_n^{\mathrm{acyc}}\bigr|$ generators are removed in
each of dimensions~$2$ and~$3$. No generating $1$-cell is removed.
The formulas now follow from Proposition~\ref{Prop:CountingCells}.
\end{proof}

\subsubsection{Exact finite-rank verification}

For~$1\leq n\leq5$, exhaustive application of the deterministic
constructions above gives:
\[
\begin{array}{|c|r|r|r|r|r|r|}
\hline
n
&
\bigl|\mathfrak T_n^{(1)}\bigr|
&
\bigl|\operatorname{Elig}^{(2)}_n\bigr|
&
\bigl|\mathcal M_n^{(2)}\bigr|
&
\bigl|\mathcal B_n^{\mathrm{acyc}}\bigr|
&
\bigl|\Sigma^{\mathrm{row,red}}_2(n)\bigr|
&
\bigl|\Sigma^{\mathrm{row,red}}_3(n)\bigr|
\\ \hline
1&1&0&0&0&1&1\\
2&16&5&4&3&5&13\\
3&149&66&32&25&18&124\\
4&1129&500&172&143&57&986\\
5&7750&2955&798&701&170&7049\\\hline
\end{array}
\]

The classification of critical confluences was checked through rank~$6$.
The eligible pairs, deterministic matching, dependency graph, and
acyclic selection were computed through rank~$5$.

\subsection{A coherent presentation on the letter generators}
\label{SS:CoherentPresentationLetterGenerators}

We now pass from the reduced row presentation to the letter
presentation. We first transport
$\Sigma^{\mathrm{row,red}}(n)$ to the pre-row presentation. We then
eliminate the non-letter row generators using the compression cells.

\subsubsection{Transport to the pre-row presentation}

By Theorem~\ref{T:SecondReductionRowRed},
$\Sigma^{\mathrm{row,red}}(n)$ is a finite coherent presentation of
$\Styl_n$. Its underlying $2$-polygraph is obtained from~$\Sigma^{\mathrm{row}}(n)$ by eliminating the cells in~$\mathcal B_n^{\mathrm{acyc}}$.
The second homotopical reduction induces a $2$-functor
\[
\mathcal R_n\colon
\Sigma^{\mathrm{row}}(n)_2^\top
\longrightarrow
\Sigma^{\mathrm{row,red}}(n)_2^\top.
\]
It fixes every generating $1$-cell and every surviving row cell and, for
$\beta$ in~$\mathcal B_n^{\mathrm{acyc}}$, satisfies~$\mathcal R_n(\beta)=\mathcal R_n(S_\beta)$.
If a selected cell~$\beta'$ or its inverse occurs in~$S_\beta$, then~$\beta'\to\beta$ is an edge of the dependency graph, so~$\beta'$ is evaluated before~$\beta$ in a topological order. Hence this recursive
definition is well defined and terminates.

Let~$
\delta\colon
s_1(\delta)\dfl t_1(\delta)$
be a generating $2$-cell of
$\Sigma^{\mathrm{prerow}}(n)$. 
Since~$\delta$ preserves the represented stylic element and
$\Sigma^{\mathrm{row}}(n)$ is convergent and presents~$\Styl_n$, one has~$\widehat{s_1(\delta)}=\widehat{t_1(\delta)}$.
Hence the track $2$-category~$\Sigma^{\mathrm{row}}(n)_2^\top$ contains the path
\[
\npath{s_1(\delta)}
\star_1
\npath{t_1(\delta)}^{-1}
\colon
s_1(\delta)\dfl t_1(\delta).
\]
Applying $\mathcal R_n$ gives the path
\[
Q_\delta
:=
\mathcal R_n
\left(
\npath{s_1(\delta)}
\star_1
\npath{t_1(\delta)}^{-1}
\right)
\colon
s_1(\delta)\dfl t_1(\delta)
\]
in the reduced row presentation.
We adjoin every generating cell
$\delta\in\Sigma^{\mathrm{prerow}}_2(n)$ as a redundant $2$-cell,
using~$Q_\delta$ as its replacement path. At the coherent level, we
also adjoin the comparison $3$-cell~$
B_\delta\colon
Q_\delta\Rrightarrow\delta$.

Let~$\Xi_n$ be the underlying $2$-polygraph obtained after all these
adjunctions. Its generating $2$-cells are the surviving row cells
together with all the generating cells of
$\Sigma^{\mathrm{prerow}}(n)$.

We now eliminate the surviving row cells. Let~$
\alpha_{u,v}\colon
r_ur_v\dfl\Rr(N(uv))$
be such a cell. Recall the canonical pre-row path~$
P_{u,v}
:=
C_{r_ur_v,\Rr(N(uv))}$
defined in~\eqref{Eq:CanonicalPreRowRowCell}. 
Since all the pre-row cells have been adjoined, the path $P_{u,v}$ is
defined in $\Xi_n$ and contains no generating cell
$\alpha_{p,q}$ of the row presentation.
The current presentation is coherent. Hence the $2$-sphere
$(P_{u,v},\alpha_{u,v})$ admits a filling~$
H_{u,v}\colon
P_{u,v}\Rrightarrow\alpha_{u,v}$.
Adjoin a generating $3$-cell~$
D_{u,v}\colon
P_{u,v}\Rrightarrow\alpha_{u,v}$
parallel to~$H_{u,v}$. Since $P_{u,v}$ does not contain
$\alpha_{u,v}$, the pair~$(\alpha_{u,v},D_{u,v})$ is collapsible and may be eliminated.
Repeating this construction removes every surviving row cell. The
fillings~$H_{u,v}$ are used only to show that the generating cells~$D_{u,v}$ are redundant, while each~$D_{u,v}$ is eliminated together with~$\alpha_{u,v}$.

The effect of these eliminations is described by the $2$-functor
\[
\mathcal E_n\colon
(\Xi_n)_2^\top
\longrightarrow
\Sigma^{\mathrm{prerow}}(n)_2^\top.
\]
It fixes every generating $1$-cell and every pre-row $2$-cell, and
satisfies~$\mathcal E_n(\alpha_{u,v})=P_{u,v}$ for every surviving row cell~$\alpha_{u,v}$. Its action on identities,
inverses, and compositions is determined by functoriality.

If~$A\colon f\Rrightarrow g$
is a generating $3$-cell of
$\Sigma^{\mathrm{row,red}}(n)$, its transported generator is~$
A^{\mathrm{prerow}}
\colon
\mathcal E_n(f)
\Rrightarrow
\mathcal E_n(g)$.
Similarly, each comparison cell
$B_\delta\colon Q_\delta\Rrightarrow\delta$ becomes~$
B_\delta^{\mathrm{prerow}}
\colon
\mathcal E_n(Q_\delta)
\Rrightarrow
\delta$.

Denote the resulting $(3,1)$-polygraph by
$\Sigma^{\mathrm{prerow,coh}}(n)$. Its underlying $2$-polygraph is
$\Sigma^{\mathrm{prerow}}(n)$, and its generating $3$-cells are
\[
\Sigma^{\mathrm{prerow,coh}}_3(n)
=
\left\{
A^{\mathrm{prerow}}
\ \middle|\
A\in\Sigma^{\mathrm{row,red}}_3(n)
\right\}
\sqcup
\left\{
B_\delta^{\mathrm{prerow}}
\ \middle|\
\delta\in\Sigma^{\mathrm{prerow}}_2(n)
\right\}.
\]

The preceding adjunctions and eliminations define a finite Tietze
sequence
\[
\mathcal T_n^{\mathrm{red}}\colon
\Sigma^{\mathrm{row,red}}(n)
\rightsquigarrow
\Sigma^{\mathrm{prerow}}(n),
\]
and the construction above is its coherent lift.

\begin{proposition}
\label{Prop:CoherentPreRow}
For $n\geq1$, the $(3,1)$-polygraph
$\Sigma^{\mathrm{prerow,coh}}(n)$ is a finite coherent presentation
of the stylic monoid~$\Styl_n$.
\end{proposition}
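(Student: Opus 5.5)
The plan is to obtain $\Sigma^{\mathrm{prerow,coh}}(n)$ from $\Sigma^{\mathrm{row,red}}(n)$ as the coherent lift of the finite Tietze sequence $\mathcal T_n^{\mathrm{red}}$. Once that is established, coherence and the presented monoid both follow from the invariance of coherent presentations under coherent Tietze transformations \cite[Theorem~2.1.3]{GaussentGuiraudMalbos15}, as recalled in Subsection~\ref{SS:CoherentPresentations}. The starting point is Theorem~\ref{T:SecondReductionRowRed}: $\Sigma^{\mathrm{row,red}}(n)$ is a finite coherent presentation of $\Styl_n$. It then suffices to check three things: each step of $\mathcal T_n^{\mathrm{red}}$ is a legitimate elementary Tietze transformation, its lift is the one described, and finiteness is preserved.

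\emph{First phase: adjunction of pre-row cells.} Note first that $\Sigma^{\mathrm{row,red}}(n)$ and $\Sigma^{\mathrm{prerow}}(n)$ have the same generating $1$-cells $\{r_u\mid u\in\Rr_n\}$, so no $1$-cells need to be added. For each $\delta\in\Sigma^{\mathrm{prerow}}_2(n)$, the path $Q_\delta$ is well defined:
\begin{itemize}
\item $\delta$ preserves the stylic class, by Proposition~\ref{P:PrerowPresentsStylic};
\item hence $\widehat{s_1(\delta)}=\widehat{t_1(\delta)}$, by convergence of $\Sigma^{\mathrm{row}}(n)$ (Proposition~\ref{P:RowPresentationConvergent});
\item $\mathcal R_n$ is well defined, by the acyclicity of $G_n^{(2)}[\mathcal B_n^{\mathrm{acyc}}]$, as argued just before the statement.
\end{itemize}
So each $\delta$ is a redundant $2$-cell with replacement $Q_\delta$, which contains no pre-row cell. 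Its coherent lift adjoins $B_\delta\colon Q_\delta\Rrightarrow\delta$, exactly as in the general recipe. Since this adds one $2$-cell and one $3$-cell per generating cell of the finite set $\Sigma^{\mathrm{prerow}}_2(n)$, finiteness is preserved.

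\emph{Second phase: elimination of surviving row cells.} For each surviving $\alpha_{u,v}$, the path $P_{u,v}$ lies in $\Xi_n$ and avoids every row cell. This holds because it is built from compression cells, their inverses, and lifted stylic relations, all of which are now generators. By coherence of the current presentation (which is preserved inductively), the sphere $(P_{u,v},\alpha_{u,v})$ has a filling $H_{u,v}$. The generator $D_{u,v}$ is therefore redundant, and the pair $(\alpha_{u,v},D_{u,v})$ is collapsible. Because the $P_{u,v}$ contain no row cells, these eliminations are independent, and their order is irrelevant. The cumulative track functor is $\mathcal E_n$, and each remaining $3$-cell is transported by applying $\mathcal E_n$ to its boundary. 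This yields precisely the generators $A^{\mathrm{prerow}}$ and $B^{\mathrm{prerow}}_\delta$. After all eliminations, the remaining $2$-cells are exactly $\Sigma^{\mathrm{prerow}}_2(n)$.

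\emph{Main obstacle.} The main point needing care is that the eliminations in the second phase do not interact. One must check that after eliminating $\alpha_{u,v}$, the replacement paths $P_{u',v'}$ used later are unaffected, and that the boundary of each $B_\delta$ is transported consistently. Both facts follow from $P_{u,v}$ avoiding all row cells, so $\mathcal E_n$ is well defined by a single non-recursive substitution. With this, the sequence is a finite coherent Tietze transformation. Its target has underlying $2$-polygraph $\Sigma^{\mathrm{prerow}}(n)$, which presents $\Styl_n$, and it has finitely many $3$-cells, namely $|\Sigma^{\mathrm{row,red}}_3(n)|+|\Sigma^{\mathrm{prerow}}_2(n)|$. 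This proves the proposition.
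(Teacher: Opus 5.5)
Your proposal is correct and follows essentially the same route as the paper. You start from the finite coherent presentation $\Sigma^{\mathrm{row,red}}(n)$ of Theorem~\ref{T:SecondReductionRowRed}, identify $\Sigma^{\mathrm{prerow,coh}}(n)$ as the coherent lift of the finite Tietze sequence $\mathcal T_n^{\mathrm{red}}$, and conclude by invariance of coherence and of the presented monoid, with finiteness inherited from the finite generating sets. The only difference is that you spell out inside the proof the checks the paper carries out in the construction preceding the statement: well-definedness of $Q_\delta$ and $P_{u,v}$, collapsibility of the pairs $(\alpha_{u,v},D_{u,v})$, and independence of the eliminations.
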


\begin{proof}
By Theorem~\ref{T:SecondReductionRowRed},
$\Sigma^{\mathrm{row,red}}(n)$ is a finite coherent presentation of~$\Styl_n$. 
The construction above is a coherent lift of the finite Tietze
sequence~$\mathcal T_n^{\mathrm{red}}$. This coherent lift preserves
coherence and the presented monoid. Its underlying $2$-polygraph is~$\Sigma^{\mathrm{prerow}}(n)$, which presents~$\Styl_n$ by Proposition~\ref{P:PrerowPresentsStylic}. Hence~$\Sigma^{\mathrm{prerow,coh}}(n)$ is a coherent presentation of~$\Styl_n$. It is finite because, for fixed~$n$, all the generating
sets involved are finite.
\end{proof}

\subsubsection{Elimination of the non-letter row generators}

For every row $u=x_1\cdots x_k$ in~$\overline{\Rr}_n$, consider the
compression cell~$
\gamma_u\colon
r_{x_1}\cdots r_{x_k}\dfl r_u$.
Set
\[
\Gamma^{\mathrm{let}}_2(n)
:=
\left\{
\gamma_u
\ \middle|\
u\in\overline{\Rr}_n
\right\},
\qquad
\Gamma^{\mathrm{let}}(n)
:=
\bigl(
\Gamma^{\mathrm{let}}_2(n),
\varnothing,
\varnothing
\bigr).
\]

\begin{lemma}
\label{L:LetterCompressionCollapsible}
For  $n\geq1$, the triple
$\Gamma^{\mathrm{let}}(n)$ is a collapsible part of
$\Sigma^{\mathrm{prerow,coh}}(n)$.
\end{lemma}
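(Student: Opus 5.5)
We verify the three conditions of a collapsible part directly for $\Gamma^{\mathrm{let}}(n)=(\Gamma^{\mathrm{let}}_2(n),\varnothing,\varnothing)$.

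\textbf{Condition i).} For every $u=x_1\cdots x_k\in\overline{\Rr}_n$, the compression cell $\gamma_u\colon r_{x_1}\cdots r_{x_k}\dfl r_u$ has target the single generating $1$-cell $r_u$. Its source is a word in letter generators $r_{x_i}$ only. Since $\ell(u)\geq2$, $r_u$ is not one of the $r_{x_i}$, so the source does not contain the target. Thus each $\gamma_u$ is collapsible in the sense of the definition, with redundant $1$-cell $r_u$, and no Nielsen transformation is needed. Distinct rows give distinct targets, so the family eliminates each non-letter generator exactly once.

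\textbf{Condition ii).} Since $\Gamma_3^{\mathrm{let}}(n)=\Gamma_4^{\mathrm{let}}(n)=\varnothing$, no element of $\Gamma_2^{\mathrm{let}}(n)$ is the redundant target of an element of a higher component. This condition is therefore automatic.

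\textbf{Condition iii).} We order the generating $1$-cells by the length of the indexing row, $r_w\prec r_u$ whenever $\ell(w)<\ell(u)$, and refine this arbitrarily to a total order. The source of $\gamma_u$ contains only the letter generators $r_{x_i}$, which have length $1<\ell(u)$, so the target is strictly greater than every $1$-cell in the source. On $2$-cells and $3$-cells we take arbitrary total orders, which are well founded because all the sets are finite for fixed $n$. The $2$-cell and $3$-cell parts of condition iii) are vacuous, since $\Gamma_3$ and $\Gamma_4$ are empty.

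The one point needing care is that the collapsibility definition refers to the current $(3,1)$-polygraph $\Sigma^{\mathrm{prerow,coh}}(n)$. We must check that the transport to the pre-row presentation left the compression cells unchanged as generating $2$-cells. It did: by Proposition~\ref{Prop:CoherentPreRow} and its construction, the underlying $2$-polygraph of $\Sigma^{\mathrm{prerow,coh}}(n)$ is exactly $\Sigma^{\mathrm{prerow}}(n)$. So the $\gamma_u$ are generating $2$-cells with the stated boundaries.

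The surviving $3$-cells $A^{\mathrm{prerow}}$ and $B_\delta^{\mathrm{prerow}}$ may contain $r_u$ or $\gamma_u$ in their boundaries. This does not obstruct collapsibility, since $R_\Gamma$ simply rewrites those boundaries. Recursively it sends $\gamma_u$ to $1_{r_{x_1}\cdots r_{x_k}}$ and $r_u$ to $r_{x_1}\cdots r_{x_k}$. This substitution is well defined because of the length order in condition iii). Hence $\Gamma^{\mathrm{let}}(n)$ is a collapsible part.
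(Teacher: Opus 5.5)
Your proposal is correct and follows essentially the same route as the paper's proof. You check condition \textbf{i)} from the fact that the source of $\gamma_u$ consists only of letter generators, note that \textbf{ii)} is vacuous, and obtain \textbf{iii)} from a finite order on $1$-cells placing letters below non-letter row generators. Your length order is just a refinement of the paper's letters-below-rows order, and your added remarks on distinct targets and on the underlying $2$-polygraph being $\Sigma^{\mathrm{prerow}}(n)$ are accurate but not needed.
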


\begin{proof}
Let $u=x_1\cdots x_k$ be a row in~$\overline{\Rr}_n$. The target of
$\gamma_u$ is the generating $1$-cell~$r_u$, whereas its source
contains only the letter generators
$r_{x_1},\ldots,r_{x_k}$. Hence $r_u$ does not occur in the source of
$\gamma_u$, and condition~\textup{\textbf{i)}} holds.
Condition~\textup{\textbf{ii)}} is automatic because
$\Gamma^{\mathrm{let}}_3(n)=
\Gamma^{\mathrm{let}}_4(n)=\varnothing$.
For condition~\textup{\textbf{iii)}}, order the generating $1$-cells
so that every letter generator lies below every non-letter row
generator, and refine this to a total order. This order is well founded
because the set of row generators is finite. Choose arbitrary
well-founded orders on the generating $2$- and $3$-cells. Then every
generator occurring in the source of~$\gamma_u$ is strictly smaller
than its target~$r_u$. Thus
$\Gamma^{\mathrm{let}}(n)$ is a collapsible part.
\end{proof}

Define
\[
\Sigma^{\mathrm{styl,coh}}(n)
:=
R_{\Gamma^{\mathrm{let}}(n)}
\bigl(
\Sigma^{\mathrm{prerow,coh}}(n)
\bigr).
\]

\begin{theorem}
\label{T:LetterCoherentPresentation}
For every $n\geq1$, the $(3,1)$-polygraph
$\Sigma^{\mathrm{styl,coh}}(n)$ is a finite coherent presentation of
the stylic monoid~$\Styl_n$. After identifying each~$r_x$ with the
letter~$x$, its underlying $2$-polygraph is
$\Sigma^{\mathrm{styl}}(n)$. Its generating $3$-cells form an
effectively computable finite homotopy basis of
$\Sigma^{\mathrm{styl}}(n)_2^\top$.
\end{theorem}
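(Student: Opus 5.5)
The plan is to combine Proposition~\ref{Prop:CoherentPreRow} with Lemma~\ref{L:LetterCompressionCollapsible} and the general fact that homotopical reduction along a collapsible part is a Tietze transformation. That fact preserves the presented monoid and coherence \cite[Theorem~2.1.3]{GaussentGuiraudMalbos15}. Since $\Sigma^{\mathrm{prerow,coh}}(n)$ is a finite coherent presentation of~$\Styl_n$, and $\Gamma^{\mathrm{let}}(n)$ is a collapsible part of it, the reduced $(3,1)$-polygraph $\Sigma^{\mathrm{styl,coh}}(n)$ is again a coherent presentation of~$\Styl_n$. It is finite because removing cells from a finite polygraph leaves a finite one.

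The second step identifies the underlying $2$-polygraph. The reduction $R_{\Gamma^{\mathrm{let}}(n)}$ removes exactly the compression cells~$\gamma_u$ and their redundant targets~$r_u$ for $u\in\overline{\Rr}_n$. It sends each~$r_u$ to $r_{x_1}\cdots r_{x_k}$ and each $\gamma_u$ to an identity. What remains in dimension~$1$ is $\{r_x\mid x\in[n]\}$. In dimension~$2$, the remaining cells are the lifted stylic relations $\eta_x^r$, $\kappa^r_{x,y,z}$ and $\lambda^r_{x,y,z}$. Their sources and targets involve only letter generators, so their boundaries are unchanged by $R_{\Gamma^{\mathrm{let}}(n)}$. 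Identifying $r_x$ with~$x$ therefore gives exactly $\Sigma^{\mathrm{styl}}(n)$.

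For the last claim, the generating $3$-cells are the images under $R_{\Gamma^{\mathrm{let}}(n)}$ of the cells~$A^{\mathrm{prerow}}$ and~$B_\delta^{\mathrm{prerow}}$. Coherence makes them a homotopy basis of $\Sigma^{\mathrm{styl}}(n)_2^\top$. Effective computability then means checking that every boundary is obtained by a finite explicit procedure:
\begin{itemize}
\item the confluences $\mathcal X_{u,v,t}$ are given by the finite case list, and their normal forms are computed by right $N$-insertion;
\item the splittable set $\mathfrak S_n$, the eligible pairs, the deterministic matching and the acyclic selection $\mathcal B_n^{\mathrm{acyc}}$ are all computed by finite deterministic scans;
\item $\mathcal R_n$ is computed recursively along a topological order;
\item the paths $Q_\delta$ come from leftmost normalization paths, which are computable;
\item the paths $P_{u,v}=C_{r_ur_v,\Rr(N(uv))}$ come from the breadth-first canonical paths~$\Pi$, which are effectively computable by Subsubsection~\ref{SSS:CanonicalPreRowPaths};
\item $\mathcal E_n$ and $R_{\Gamma^{\mathrm{let}}(n)}$ are functors defined on generators by explicit formulas.
\end{itemize}
Composing these computable maps yields each boundary explicitly.

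The main obstacle is the elimination step producing the $D_{u,v}$. There the filling $H_{u,v}$ is only asserted to exist by coherence, so one must make sure the resulting generators do not depend on a non-constructive choice. I would handle this by noting that only the boundaries $(P_{u,v},\alpha_{u,v})$ of the $D_{u,v}$ matter. These cells are eliminated together with $\alpha_{u,v}$, so no choice of $H_{u,v}$ enters the final boundaries; only the computable paths $P_{u,v}$ do. I would also check that the recursion defining $\mathcal E_n$ terminates, which it does because the $P_{u,v}$ contain no row cells.
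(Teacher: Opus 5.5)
Your proposal is correct and follows essentially the same route as the paper: coherence and the presented monoid come from Proposition~\ref{Prop:CoherentPreRow} together with the collapsibility of $\Gamma^{\mathrm{let}}(n)$. The underlying $2$-polygraph is identified by noting that only the compression cells and non-letter generators are removed and no generating $3$-cell is eliminated, and effective computability rests on the final boundaries being built from the explicit paths $Q_\delta$ and $P_{u,v}$ via $\mathcal E_n$ and $R_{\Gamma^{\mathrm{let}}(n)}$, with the fillings $H_{u,v}$ serving only to justify the redundancy of the eliminated cells $D_{u,v}$, exactly as in the paper.
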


\begin{proof}
By Proposition~\ref{Prop:CoherentPreRow},
$\Sigma^{\mathrm{prerow,coh}}(n)$ is a finite coherent presentation
of~$\Styl_n$. By
Lemma~\ref{L:LetterCompressionCollapsible},
$\Gamma^{\mathrm{let}}(n)$ is a collapsible part. Homotopical
reduction therefore preserves coherence and the presented monoid.
For every non-letter row $u=x_1\cdots x_k$, the reduction satisfies
\[
R_{\Gamma^{\mathrm{let}}(n)}(r_u)
=
r_{x_1}\cdots r_{x_k},
\qquad
R_{\Gamma^{\mathrm{let}}(n)}(\gamma_u)
=
1_{r_{x_1}\cdots r_{x_k}}.
\]
Thus all non-letter row generators and all compression cells are
eliminated. The remaining generating $1$-cells are the letter
generators~$r_x$, and the remaining generating $2$-cells are the
lifted stylic relations. Under the identification $r_x=x$, they become
\[
\eta_x^r\longmapsto\eta_x,
\qquad
\kappa_{x,y,z}^r\longmapsto\kappa_{x,y,z},
\qquad
\lambda_{x,y,z}^r\longmapsto\lambda_{x,y,z}.
\]
Hence the underlying $2$-polygraph is
$\Sigma^{\mathrm{styl}}(n)$.
Since $\Gamma^{\mathrm{let}}_3(n)=\varnothing$, no generating
$3$-cell is eliminated. The generating $3$-cells of~$\Sigma^{\mathrm{prerow,coh}}(n)$ are retained, with their boundaries
transformed by~$R_{\Gamma^{\mathrm{let}}(n)}$. They therefore form a
homotopy basis of~$\Sigma^{\mathrm{styl}}(n)_2^\top$.
This basis is finite and effectively computable. Its boundaries are
obtained from the fixed paths~$Q_\delta$ and~$P_{u,v}$ by applying
$\mathcal E_n$ and
$R_{\Gamma^{\mathrm{let}}(n)}$. The auxiliary fillings~$H_{u,v}$ are
used only to justify the coherent eliminations and do not occur in the
final generators or their boundaries. Since all the generating sets
are finite for fixed~$n$, the result follows.
\end{proof}

\subsubsection{Cell counts in the final presentation}
We now record the numbers of generating cells in the coherent presentation on the letter generators.

\begin{proposition}
\label{P:FinalLetterCellCounts}
For every $n\geq1$, the numbers of generating cells of
$\Sigma^{\mathrm{styl,coh}}(n)$ are
\[
\begin{aligned}
\bigl|\Sigma^{\mathrm{styl,coh}}_1(n)\bigr|
&=n,
\qquad
\bigl|\Sigma^{\mathrm{styl,coh}}_2(n)\bigr|
=\frac{n^3+2n}{3},
\\
\bigl|\Sigma^{\mathrm{styl,coh}}_3(n)\bigr|
&=
\bigl|\Sigma^{\mathrm{row,red}}_3(n)\bigr|
+
2^n-1+\frac{n^3-n}{3}.
\end{aligned}
\]
\end{proposition}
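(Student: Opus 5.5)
The plan is to count the cells in each dimension separately. For each dimension I track exactly which generators survive the coherent Tietze sequence
\[
\Sigma^{\mathrm{row,red}}(n)\rightsquigarrow\Sigma^{\mathrm{prerow,coh}}(n)\rightsquigarrow\Sigma^{\mathrm{styl,coh}}(n)
\]
constructed above.

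\textbf{Dimensions $1$ and $2$.} By Theorem~\ref{T:LetterCoherentPresentation}, the underlying $2$-polygraph of $\Sigma^{\mathrm{styl,coh}}(n)$ is $\Sigma^{\mathrm{styl}}(n)$. Its generating $1$-cells are the $n$ letters, so the first formula is immediate.

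For the second formula I count the three families of relations.
\begin{itemize}
\item There are $n$ idempotency cells $\eta_x$.
\item The cells $\kappa_{x,y,z}$ are indexed by triples with $x\le y<z$. Choosing $x<y<z$ gives $\binom n3$, and choosing $x=y<z$ gives $\binom n2$, so there are $\binom n3+\binom n2=\binom{n+1}3$ of them.
\item The cells $\lambda_{x,y,z}$ are indexed by triples with $x<y\le z$, which by the symmetric count also gives $\binom{n+1}{3}$.
\end{itemize}
Hence the total is
\[
n+2\binom{n+1}{3}=n+\frac{(n+1)n(n-1)}{3}=\frac{n^3+2n}{3}.
\]

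\textbf{Dimension $3$.} By the description of $\Sigma^{\mathrm{prerow,coh}}_3(n)$ as a disjoint union, its generating $3$-cells are of two kinds:
\begin{itemize}
\item the transported cells $A^{\mathrm{prerow}}$, in bijection with $\Sigma^{\mathrm{row,red}}_3(n)$;
\item the comparison cells $B^{\mathrm{prerow}}_\delta$, one for each $\delta\in\Sigma^{\mathrm{prerow}}_2(n)$.
\end{itemize}
The eliminations of the surviving row cells $\alpha_{u,v}$ remove exactly the auxiliary cells $D_{u,v}$ that were adjoined alongside them, so this step has net zero effect on the $3$-cell count. The final reduction $R_{\Gamma^{\mathrm{let}}(n)}$ has $\Gamma^{\mathrm{let}}_3(n)=\varnothing$, so no $3$-cell is removed there either. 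It therefore remains to compute $|\Sigma^{\mathrm{prerow}}_2(n)|$.

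This set consists of the lifted stylic relations together with one compression cell $\gamma_u$ for each $u\in\overline{\Rr}_n$.
\begin{itemize}
\item The lifted stylic relations number $\frac{n^3+2n}{3}$, as computed above.
\item The compression cells number $|\overline{\Rr}_n|=2^n-1-n$.
\end{itemize}
Adding these gives
\[
\frac{n^3+2n}{3}+2^n-1-n=2^n-1+\frac{n^3-n}{3}.
\]
This is exactly the claimed excess over $|\Sigma^{\mathrm{row,red}}_3(n)|$.

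\textbf{Main obstacle.} The one delicate point is the bookkeeping: I must confirm that no $B_\delta$ is identified with, or eliminated alongside, another generator. Each $B_\delta$ is adjoined when $\delta$ is adjoined, and $\delta$ is never eliminated afterwards. Only the row cells, the non-letter $1$-cells and the compression $2$-cells are removed, and each removal is paired with its own collapsible cell ($D_{u,v}$, respectively $\gamma_u$). So the count in dimension $3$ is exactly as stated.
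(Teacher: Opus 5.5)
Your proposal is correct and follows the paper's proof essentially verbatim. You count the letter presentation's cells as $n+2\binom{n+1}{3}$, then observe that the generating $3$-cells are the transported row confluences plus one comparison cell $B_\delta$ per pre-row $2$-cell, with the $D_{u,v}$ cancelling against the row cells and $\Gamma^{\mathrm{let}}_3(n)=\varnothing$.

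One small correction to your final paragraph: it is not true that $\delta$ is never eliminated afterwards. The compression cells $\gamma_u$ are pre-row cells, and they are eliminated in the last reduction. However, they are removed as collapsible $2$-cells together with $r_u$, which removes no $3$-cell. Their comparison cells $B_{\gamma_u}$ therefore survive, with target reduced to an identity, and the count is unaffected.
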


\begin{proof}
The generating $1$-cells are the $n$ letters. The generating $2$-cells
are the $n$ idempotent relations and the two families of Knuth-type
relations. Each of these two families has
$\binom{n+1}{3}$ elements. Hence
\[
\bigl|\Sigma^{\mathrm{styl,coh}}_2(n)\bigr|
=
n+2\binom{n+1}{3}
=
\frac{n^3+2n}{3}.
\]

The transport to the pre-row presentation adds one comparison
$3$-cell~$B_\delta$ for every generating pre-row $2$-cell~$\delta$.
The pre-row presentation has~$\displaystyle
n+2\binom{n+1}{3}$
lifted stylic cells and~$(2^n-n-1)$
compression cells. Therefore
\[
\bigl|\Sigma^{\mathrm{prerow}}_2(n)\bigr|
=
2^n-1+\frac{n^3-n}{3}.
\]

The auxiliary cells~$D_{u,v}$ are eliminated together with the row
cells~$\alpha_{u,v}$. The final reduction has
$\Gamma^{\mathrm{let}}_3(n)=\varnothing$, so it eliminates no
generating $3$-cell. Consequently,
\[
\bigl|\Sigma^{\mathrm{styl,coh}}_3(n)\bigr|
=
\bigl|\Sigma^{\mathrm{row,red}}_3(n)\bigr|
+
\bigl|\Sigma^{\mathrm{prerow}}_2(n)\bigr|,
\]
which gives the stated formula.
\end{proof}

\begin{example}
For~$1\leq n\leq5$, the resulting numbers are
\[
\begin{array}{|c|r|r|r|}
\hline
n
&
\bigl|\Sigma^{\mathrm{styl,coh}}_1(n)\bigr|
&
\bigl|\Sigma^{\mathrm{styl,coh}}_2(n)\bigr|
&
\bigl|\Sigma^{\mathrm{styl,coh}}_3(n)\bigr|
\\ \hline
1&1&1&2\\
2&2&4&18\\
3&3&11&139\\
4&4&24&1021\\
5&5&45&7120\\
\hline
\end{array}
\]
\end{example}

\subsubsection{Categorical actions}
\label{SSS:CategoricalActions}

We finally interpret the coherent letter presentation in terms of
categorical actions.
For $2$-categories $\mathcal A$ and $\mathcal D$, let
$2\operatorname{Rep}(\mathcal A,\mathcal D)$ denote the category of
pseudofunctors from $\mathcal A$ to~$\mathcal D$ and pseudonatural
transformations. We denote by~$2\Cat(\mathcal A,\mathcal D)$ its full subcategory whose objects are
strict $2$-functors. Here~$\Cat$ is the $2$-category of small
categories, functors, and natural transformations.

Let $M$ be a monoid, regarded as a $2$-category with one $0$-cell,
the elements of $M$ as $1$-cells, and only identity $2$-cells. Following
\cite[Subsection~5.1]{GaussentGuiraudMalbos15}, set~$\Act(M):=2\operatorname{Rep}(M,\Cat)$.
The $2$-category associated with the coherent presentation
$\Sigma^{\mathrm{styl,coh}}(n)$ is
\[
\mathcal C_{\mathrm{styl}}^{\mathrm{coh}}(n)
:=
\Sigma^{\mathrm{styl}}(n)_2^\top
\big/
\Sigma^{\mathrm{styl,coh}}_3(n),
\]
where the quotient identifies the two boundary $2$-cells of every
generating $3$-cell.

Since $\Sigma^{\mathrm{styl,coh}}(n)$ is a coherent presentation of
$\Styl_n$, \cite[Theorem~5.1.6]{GaussentGuiraudMalbos15} gives an
equivalence of categories
\[
\Act(\Styl_n)
\simeq
2\Cat\bigl(
\mathcal C_{\mathrm{styl}}^{\mathrm{coh}}(n),
\Cat
\bigr).
\]
Equivalently, the objects on the right-hand side are the strict $2$-functors
\[
\Phi\colon
\Sigma^{\mathrm{styl}}(n)_2^\top
\longrightarrow
\Cat
\]
such that~$
\Phi\bigl(s_2(A)\bigr)
=
\Phi\bigl(t_2(A)\bigr)$
for every generating $3$-cell
$A\in\Sigma^{\mathrm{styl,coh}}_3(n)$.
If $\Phi(\bullet)=\mathbf C$, each letter~$x$ in~$[n]$ determines an
endofunctor
\[
F_x:=\Phi(x)\colon
\mathbf C\longrightarrow\mathbf C.
\]
The defining stylic $2$-cells determine the natural isomorphisms
$\Phi(\eta_x)$, $\Phi(\kappa_{x,y,z})$, and
$\Phi(\lambda_{x,y,z})$, while the generating $3$-cells impose their
coherence diagrams. Thus, up to equivalence, categorical
actions of~$\Styl_n$ are described by a finite family of endofunctors,
natural isomorphisms, and coherence conditions.

\section[Example: the case n=2]
{Example: the case \texorpdfstring{$n=2$}{n=2}}
\label{S:Example}

We conclude with the case of rank~$2$. It illustrates the two
homotopical reductions of the coherent row presentation and the passage
from the row presentation to a coherent presentation on the letter
generators.

\subsection{The row presentation}

For $n=2$, one has~$\Rr_2=\{1,2,12\}$,
and the only pair of rows satisfying the relation~$\triangleleft$ is~$2\triangleleft12$.
Hence the row presentation has the generating $1$-cells
$r_1,r_2,r_{12}$ and the eight generating $2$-cells
\[
r_1r_1\odfl{\alpha_{1,1}}r_1,
\qquad
r_1r_2\odfl{\alpha_{1,2}}r_{12},
\qquad
r_1r_{12}\odfl{\alpha_{1,12}}r_{12},
\qquad
r_2r_1\odfl{\alpha_{2,1}}r_2r_{12},
\]
\[
r_2r_2\odfl{\alpha_{2,2}}r_2,
\qquad
r_{12}r_1\odfl{\alpha_{12,1}}r_2r_{12},
\qquad
r_{12}r_2\odfl{\alpha_{12,2}}r_{12},
\qquad
r_{12}r_{12}\odfl{\alpha_{12,12}}r_2r_{12}.
\]
There are $3^3=27$ triples of rows. A triple is not critical precisely
when~$
(u,v)=(2,12)$ or~$(v,t)=(2,12)$.
Each condition excludes three triples, and they cannot hold
simultaneously. Therefore~$|\mathfrak C_2|=21$.
Thus the coherent row presentation
$\Sigma^{\mathrm{row,coh}}(2)$ has $21$ generating $3$-cells.

\subsection{The first reduction}

Since $12$ is the only row of length at least two and
$\partial(12)=2$, splittability forces~$
u=12$ and~$v\in\{1,2\}$.
If $v=1$, every row $t$ is possible. If $v=2$, criticality excludes
$t=12$. Hence
\[
\mathfrak S_2
=
\left\{
(12,1,1),
(12,1,2),
(12,1,12),
(12,2,1),
(12,2,2)
\right\}.
\]
The first reduction removes the five corresponding generating
confluences. Among the sixteen surviving triples, thirteen are
letter-first: eight have first row~$1$ and five have first row~$2$.
The remaining three are~$(12,12,1)$,~$(12,12,2)$, and~$(12,12,12)$.
Therefore
\[
\bigl|\Sigma^{\mathrm{row,scoh}}_3(2)\bigr|
=
21-5
=
16.
\]
The underlying row $2$-polygraph is unchanged.

\subsection{The second reduction}

Let~$\mathfrak T_2^{(1)}=\mathfrak C_2\setminus\mathfrak S_2$.
The eligible pairs, listed in the deterministic order fixed above, are
\[
\begin{array}{|c|c|c|}
\hline
\tau & \text{type} & \beta \\ \hline
(1,1,2)  & 1             & \alpha_{1,12}\\
(1,2,1)  & 2\mathrm b    & \alpha_{12,1}\\
(1,2,2)  & 1             & \alpha_{12,2}\\
(1,12,1) & 2\mathrm b    & \alpha_{12,12}\\
(1,2,1)  & 2\mathrm b    & \alpha_{12,12}\\ \hline
\end{array}
\]
The triple~$(1,2,1)$ occurs twice because it forms an eligible pair
with both~$\alpha_{12,1}$ and~$\alpha_{12,12}$.
The deterministic matching retains the first four pairs. Its row-cell
vertices are therefore
\[
\mathcal B_2^{\mathrm{match}}
=
\left\{
\alpha_{1,12},
\alpha_{12,1},
\alpha_{12,2},
\alpha_{12,12}
\right\}.
\]
The dependency graph has exactly the three edges
\[
\alpha_{1,12}\longrightarrow\alpha_{12,12},
\qquad
\alpha_{12,1}\longrightarrow\alpha_{12,12},
\qquad
\alpha_{12,12}\longrightarrow\alpha_{12,1}.
\]
In particular, it contains the directed cycle
\[
\alpha_{12,1}
\longrightarrow
\alpha_{12,12}
\longrightarrow
\alpha_{12,1}.
\]
In the fixed greedy order, $\alpha_{12,1}$ is selected before
$\alpha_{12,12}$. The latter is therefore rejected, and
\[
\mathcal B_2^{\mathrm{acyc}}
=
\left\{
\alpha_{1,12},
\alpha_{12,1},
\alpha_{12,2}
\right\}.
\]
The second reduction removes these three row $2$-cells together with
their matched Nielsen-transformed generating $3$-cells. Hence
$\Sigma^{\mathrm{row,red}}(2)$ has the five generating $2$-cells
\[
r_1r_1\odfl{\alpha_{1,1}}r_1,
\qquad
r_1r_2\odfl{\alpha_{1,2}}r_{12},
\qquad
r_2r_1\odfl{\alpha_{2,1}}r_2r_{12},
\qquad
r_2r_2\odfl{\alpha_{2,2}}r_2,
\qquad
r_{12}r_{12}
\odfl{\alpha_{12,12}}
r_2r_{12}.
\]
Moreover,
\[
\bigl|\Sigma^{\mathrm{row,red}}_3(2)\bigr|
=
16-3
=
13.
\]

\subsection{Passage to the pre-row presentation}

The pre-row presentation has the same generating $1$-cells
$r_1,r_2,r_{12}$ and the five generating $2$-cells
\[
r_1r_2\odfl{\gamma_{12}}r_{12},
\qquad
r_1r_1\odfl{\eta_1^r}r_1,
\qquad
r_2r_2\odfl{\eta_2^r}r_2,
\qquad
r_2r_1r_1
\odfl{\kappa_{1,1,2}^r}
r_1r_2r_1,
\qquad
r_2r_2r_1
\odfl{\lambda_{1,2,2}^r}
r_2r_1r_2.
\]
We apply the Tietze sequence
$\mathcal T_2^{\mathrm{red}}$ constructed in
Subsection~\ref{SS:CoherentPresentationLetterGenerators}.
The five pre-row cells are first adjoined, and the five surviving row
cells are then eliminated using their canonical pre-row paths. Three
of these paths are immediate:
\[
P_{1,1}=\eta_1^r,
\qquad
P_{1,2}=\gamma_{12},
\qquad
P_{2,2}=\eta_2^r.
\]
The other two are
\[
\begin{aligned}
P_{2,1}\colon\quad
r_2r_1
&\odfl{(\eta_2^r1_{r_1})^{-1}}
r_2r_2r_1
\odfl{\lambda_{1,2,2}^r}
r_2r_1r_2
\odfl{1_{r_2}\gamma_{12}}
r_2r_{12},
\end{aligned}
\]
and
\[
P_{12,12}\colon\quad
r_{12}r_{12}\odfl{(\gamma_{12}1_{r_{12}})^{-1}}
r_1r_2r_{12}
\odfl{1_{r_1}1_{r_2}\gamma_{12}^{-1}}
r_1r_2r_1r_2
\odfl{(\kappa_{1,1,2}^r1_{r_2})^{-1}}
r_2r_1r_1r_2
\odfl{1_{r_2}\eta_1^r1_{r_2}}
r_2r_1r_2
\odfl{1_{r_2}\gamma_{12}}
r_2r_{12}.
\]
At the coherent level, adjoining each pre-row cell~$\delta$ introduces
one comparison $3$-cell
$B_\delta\colon Q_\delta\Rrightarrow\delta$. Thus five comparison
$3$-cells are added. To eliminate the five surviving row cells, one
adjoins the auxiliary cells~$D_{u,v}$ and then eliminates each
$D_{u,v}$ together with the corresponding row cell. Hence the
auxiliary cells~$D_{u,v}$ do not survive, whereas the five comparison
cells~$B_\delta$ do. Consequently,
\[
\bigl|\Sigma^{\mathrm{prerow,coh}}_3(2)\bigr|
=
\bigl|\Sigma^{\mathrm{row,red}}_3(2)\bigr|+5
=
13+5
=
18.
\]

\subsection{The coherent letter presentation}

The compression cell~$
\gamma_{12}\colon
r_1r_2\dfl r_{12}$
eliminates the non-letter generator~$r_{12}$. After replacing every
occurrence of~$r_{12}$ by~$r_1r_2$ and identifying $r_1$ and $r_2$
with the letters $1$ and $2$, respectively, the surviving generating
$2$-cells are
\[
\Sigma^{\mathrm{styl}}_2(2)
=
\left\{
\eta_1\colon11\dfl1,
\quad
\eta_2\colon22\dfl2,
\quad
\kappa_{1,1,2}\colon211\dfl121,
\quad
\lambda_{1,2,2}\colon221\dfl212
\right\}.
\]
Together with the generating $1$-cells $1$ and $2$, these cells form
the letter presentation~$\Sigma^{\mathrm{styl}}(2)$.

The final reduction eliminates only the compression
cell~$\gamma_{12}$ and its redundant target~$r_{12}$. In particular,
it eliminates no generating $3$-cell and only transforms the
boundaries of the eighteen generating $3$-cells. Hence~$\bigl|\Sigma^{\mathrm{styl,coh}}_3(2)\bigr|=18$.
By Theorem~\ref{T:LetterCoherentPresentation}, these generating
$3$-cells form a finite homotopy basis of~$\Sigma^{\mathrm{styl}}(2)_2^\top$. Therefore
$\Sigma^{\mathrm{styl,coh}}(2)$ is a finite coherent presentation of
$\Styl_2$.

\subsection{The resulting homotopy basis}

The generating $3$-cells surviving the second row reduction are indexed by
\[
\mathfrak T_2^{\mathrm{red}}
:=
\mathfrak T_2^{(1)}
\setminus
\left\{
(1,1,2),(1,2,1),(1,2,2)
\right\}
=
\left\{
\begin{aligned}
&(1,1,1),(1,1,12),(1,12,1),(1,12,2),(1,12,12),
\\
&(2,1,1),(2,1,2),(2,1,12),(2,2,1),(2,2,2),
\\
&(12,12,1),(12,12,2),(12,12,12)
\end{aligned}
\right\}.
\]
For~$\tau$ in~$\mathfrak T_2^{\mathrm{red}}$, denote by
$\mathcal X_\tau^{\mathrm{prerow}}$ the transported image of
$\mathcal X_\tau$, and set~$
\mathcal X_\tau^{\mathrm{let}}
:=
R_{\Gamma^{\mathrm{let}}(2)}
\bigl(
\mathcal X_\tau^{\mathrm{prerow}}
\bigr)$.
Similarly, for every~$\delta$ in
$\Sigma^{\mathrm{prerow}}_2(2)$, denote by
$B_\delta^{\mathrm{prerow}}$ the transported image of~$B_\delta$ and
set~$
B_\delta^{\mathrm{let}}
:=
R_{\Gamma^{\mathrm{let}}(2)}
\bigl(
B_\delta^{\mathrm{prerow}}
\bigr)$.

The resulting homotopy basis is
\[
\mathcal H_2
={}
\left\{
\mathcal X_\tau^{\mathrm{let}}
\ \middle|\
\tau\in\mathfrak T_2^{\mathrm{red}}
\right\}
\sqcup
\left\{
B_{\gamma_{12}}^{\mathrm{let}},
B_{\eta_1^r}^{\mathrm{let}},
B_{\eta_2^r}^{\mathrm{let}},
B_{\kappa_{1,1,2}^r}^{\mathrm{let}},
B_{\lambda_{1,2,2}^r}^{\mathrm{let}}
\right\}.
\]
It contains thirteen transported row confluences and five comparison
$3$-cells, hence eighteen generating $3$-cells.
For instance,
\[
\mathcal X_{1,1,1}^{\mathrm{let}}
\colon
(\eta_1 1_1)\star_1\eta_1
\Rrightarrow
(1_1\eta_1)\star_1\eta_1.
\]
The other boundary paths are obtained by the transport and elimination
procedures described above. 

The successive cell counts are
\[
\begin{array}{|c|c|c|c|}
\hline
\text{presentation}
& \#1\text{-cells}
& \#2\text{-cells}
& \#3\text{-cells}
\\ \hline
\Sigma^{\mathrm{row,coh}}(2)
&3&8&21\\
\Sigma^{\mathrm{row,scoh}}(2)
&3&8&16\\
\Sigma^{\mathrm{row,red}}(2)
&3&5&13\\
\Sigma^{\mathrm{prerow,coh}}(2)
&3&5&18\\
\Sigma^{\mathrm{styl,coh}}(2)
&2&4&18\\ \hline
\end{array}
\]

\appendix

\section{Verification of the explicit generating confluences}
\label{A:ExplicitGeneratingConfluences}

We verify the confluence diagrams displayed in Subsubsection~\ref{SSS:ExplicitGeneratingConfluences}. Fix a critical triple~$(u,v,t)\in\mathfrak C_n$. In each case, we show that every displayed rewriting step is well defined and that both branches reach the common normal form~$\operatorname{NF}(u,v,t)$.

We repeatedly use the following facts. The row-generator reading of an
$N$-tableau is in normal form, and the $N$-tableau~$N(uv)$ associated
with two rows~$u$ and~$v$ has at most two rows. Moreover, by convergence
of the row presentation, two normal forms representing the same stylic
element are equal.
\medskip

\noindent
\textbf{Case $1$.}
Assume that $N(uv)$ and $N(vt)$ both have one row. Write
$\R(N(uv))=b$ and $\R(N(vt))=d$. By Lemma~\ref{L:OneRowCriterion},
one has $b=u\cup v$, $d=v\cup t$, $\min(v)\ge\max(u)$, and
$\min(t)\ge\max(v)$. Thus $m=u\cup v\cup t$ is the one-row normal form.
Moreover, $\max(b)=\max(v)$ and $\min(t)\ge\max(v)=\max(b)$, so
Lemma~\ref{L:OneRowCriterion} gives $\R(N(bt))=m$. Similarly,
$\min(d)=\min(v)\ge\max(u)$, so $\R(N(ud))=m$. Hence the two branches are
exactly those displayed in \textbf{Case~$1$}.

\medskip

\noindent
\textbf{Case $2$.}
Assume that $N(uv)$ has one row and $N(vt)$ has two rows. Write
$\R(N(uv))=b$ and $\R(N(vt))=~cd$ with $c\neq\varepsilon$. By
Lemma~\ref{L:UsefulReduciblePairsCriticalTriple}, the $2$-cells~$\alpha_{b,t}$
and $\alpha_{u,c}$ are applicable. Since $N(uv)$ has one row, one has
$b=u\cup v$ and $\min(v)\ge\max(u)$. Since $c$ is the top row of $N(vt)$,
Lemma~\ref{L:TopRowInclusion} gives $\supp(c)\subseteq\supp(v)$, hence
$\min(c)\ge\min(v)\ge\max(u)$. Therefore $N(uc)$ has one row by
Lemma~\ref{L:OneRowCriterion}. Write $\R(N(uc))=g$.
If $g\triangleleft d$, then $r_g r_d$ is already in normal form. On the
left branch, the cell~$\alpha_{b,t}$ has target~$\Rr(N(bt))$, which is also
in normal form.
Uniqueness of normal forms therefore gives~$\R(N(bt))=gd$.
Thus both branches end at
$\operatorname{NF}(u,v,t)=r_g r_d$, giving \textbf{Case~$2a$}.
If $g\not\triangleleft d$, the right branch takes the additional step
$\alpha_{g,d}$. Write~$\R(N(gd))=pq$,
with $p\in\Rr_n\cup\{\varepsilon\}$ and $q\in\Rr_n$. The right branch then
reaches the normal form~$r_p r_q$, with~$r_p$ omitted when
$p=\varepsilon$. Since~$\Rr(N(bt))$ is also a normal form, uniqueness gives~$\R(N(bt))=pq$.
This is \textbf{Case~$2b$}.

\medskip

\noindent
\textbf{Case $3$.}
Assume that $N(uv)$ has two rows and $N(vt)$ has one row. Write
$\R(N(uv))=ab$ with $a\neq\varepsilon$, and write $\R(N(vt))=d$.
By Lemma~\ref{L:UsefulReduciblePairsCriticalTriple}, the $2$-cell~$\alpha_{b,t}$ is applicable. Write $\R(N(bt))=ef$, with
$e\in\Rr_n\cup\{\varepsilon\}$ and $f\in\Rr_n$.
Since $N(vt)$ has one row, Lemma~\ref{L:OneRowCriterion} gives
$d=v\cup t$ and $\min(t)\ge\max(v)$. On the right branch, the word
$r_u r_d$ is either already in normal form if $u\triangleleft d$, or else it
reduces by $\alpha_{u,d}$.

Suppose first that $e=\varepsilon$, so $\R(N(bt))=f$. Since
$a\triangleleft b$, one has
\[
\supp(b)\subseteq\supp(f)\quad  \text{ and } \quad \min(f)\le\min(b)<\min(a).
\]
Lemma~\ref{L:TriangleleftMonotonicity} gives
$a\triangleleft f$. Hence the left branch is already in normal form at
$r_a r_f$, and $\operatorname{NF}(u,v,t)=r_a r_f$.
If $u\triangleleft d$, then the right branch is already in normal form at
$r_u r_d$, and uniqueness of normal forms gives $r_u r_d=r_a r_f$. This is
\textbf{Case~$3a1$}. 
If $u\not\triangleleft d$, then the $2$-cell~$\alpha_{u,d}$ applies,
and uniqueness gives $\R(N(ud))=af$. This is \textbf{Case~$3a2$}.

Suppose now that $e\neq\varepsilon$. Then the left branch reaches
$r_a r_e r_f$. Since the right branch factors through the product of two rows
$r_u r_d$, its normal form has at most two rows by
Lemma~\ref{L:TwoRowsProductRows}. Hence $r_a r_e r_f$ cannot already be in
normal form. Therefore $a\not\triangleleft e$, and the $2$-cell~$\alpha_{a,e}1_{r_f}$ applies.
Write $\R(N(ae))=pq$. If $p\neq\varepsilon$, then $p\triangleleft q$.
By Lemma~\ref{L:BottomRowUnion}, $\supp(q)=\supp(a)\cup\supp(e)$. Since
$a\triangleleft b$, one has $\supp(a)\subseteq\supp(b)$, and since
$\R(N(bt))=ef$, one has $\supp(b)\subseteq\supp(f)$ and
$\supp(e)\subseteq\supp(f)$. Hence $\supp(q)\subseteq\supp(f)$. Moreover,
$\min(f)\le\min(b)<\min(a)$ and $\min(f)<\min(e)$, so
$\min(f)<\min(q)$. Thus $q\triangleleft f$, and~$r_p r_q r_f$ would be a
three-row normal form. This contradicts the fact that the common normal form
has at most two rows. Hence $p=\varepsilon$. Write $\R(N(ae))=g$. Then
$\operatorname{NF}(u,v,t)=r_g r_f$.
If $u\triangleleft d$, then the right branch is already in normal form and
$r_u r_d=r_g r_f$. This is \textbf{Case~$3b1$}. If $u\not\triangleleft d$, then
$\alpha_{u,d}$ applies and uniqueness gives $\R(N(ud))=gf$. This is
\textbf{Case~$3b2$}.

\medskip

\noindent
\textbf{Case $4$.}
Assume that both $N(uv)$ and $N(vt)$ have two rows. Write
$\R(N(uv))=ab$ with $a\neq\varepsilon$, and write $\R(N(vt))=cd$ with
$c\neq\varepsilon$. By Lemma~\ref{L:BottomRowUnion}, one has
\[
\supp(b)=\supp(u)\cup\supp(v) \quad  \text{ and }\quad \supp(d)=\supp(v)\cup\supp(t). 
\]
By Lemma~\ref{L:TopRowInclusion},
$\supp(c)\subseteq\supp(v)\subseteq\supp(b)$.
Since $N(vt)$ has two rows, Lemma~\ref{L:OneRowCriterion} gives
\[
\min(t)<\max(v)\le\max(b),
\]
 so $N(bt)$ has two rows. Write
$\R(N(bt))=ef$ with $e\neq\varepsilon$. Then
$\supp(f)=\supp(u)\cup\supp(v)\cup\supp(t)$.
By Lemma~\ref{L:UsefulReduciblePairsCriticalTriple}, the $2$-cells
$\alpha_{b,t}$ and $\alpha_{u,c}$ are applicable. Thus the left branch always
reaches $r_a r_e r_f$. On the right branch, write $\R(N(uc))=ij$, with
$i\in\Rr_n\cup\{\varepsilon\}$ and $j\in\Rr_n$.

Suppose first that $N(uc)$ has one row. Write $\R(N(uc))=g$. The right branch
reaches $r_g r_d$.
If $g\triangleleft d$, then $r_g r_d$ is already in normal form, so
$\operatorname{NF}(u,v,t)=r_g r_d$. Since this normal form has two rows,
$r_a r_e r_f$ cannot be normal, hence $a\not\triangleleft e$. Write
$\R(N(ae))=pq$. The same support argument as in \textbf{Case~$3b$} shows that
$p=\varepsilon$. Write $\R(N(ae))=g'$. Then the left branch reaches
$r_{g'}r_f$, and uniqueness gives $g'=g$ and~$f=d$. This is \textbf{Case~$4a1$}.
If $g\not\triangleleft d$, then the right branch reduces the pair $(g,d)$.
Since this is a product of two rows, its normal form has at most two rows.
Hence the common normal form has at most two rows, so again $r_a r_e r_f$
cannot already be in normal form. Thus $a\not\triangleleft e$. Write
$\R(N(ae))=\rho\sigma$. The same support argument gives $\rho=\varepsilon$.
Write $\R(N(ae))=p'$. The left branch reaches $r_{p'}r_f$.
Now write $\R(N(gd))=ph$. By Lemma~\ref{L:BottomRowUnion},
$\supp(h)=\supp(g)\cup\supp(d)$. Since
$\supp(g)=\supp(u)\cup\supp(c)$, $\supp(d)=\supp(v)\cup\supp(t)$, and
$\supp(c)\subseteq\supp(v)$, one gets
\[
\supp(h)=\supp(u)\cup\supp(v)\cup\supp(t)=\supp(f).
\]
Thus $h=f$.
We claim that $N(gd)$ has two rows. Since $c$ is the top row of $N(vt)$,
some letter of $c$ is bumped during the insertion of the letters of $t$ into
$v$. Hence there exist $x\in\supp(t)$ and $y\in\supp(c)$ such that $x<y$.
Since $x\in\supp(d)$ and $y\in\supp(c)\subseteq\supp(g)$, one has~$\min(d)\le x<y\le\max(g)$.
Thus $\min(d)<\max(g)$, and Lemma~\ref{L:OneRowCriterion} implies that
$N(gd)$ has two rows. Hence $p\neq\varepsilon$, and $\R(N(gd))=~pf$.
Uniqueness gives $p'=p$. This is \textbf{Case~$4a2$}.

Suppose now that $N(uc)$ has two rows. Write $\R(N(uc))=ij$ with
$i\neq\varepsilon$. Then the right branch reaches $r_i r_j r_d$ and
$i\triangleleft j$. By Lemma~\ref{L:BottomRowUnion},
$\supp(j)=\supp(u)\cup\supp(c)$. Applying the same lemma to~$N(jd)$, the
bottom row of $N(jd)$ has support
\[
\supp(j)\cup\supp(d)
=
\supp(u)\cup\supp(c)\cup\supp(v)\cup\supp(t).
\]
Since $\supp(c)\subseteq\supp(v)$, this is
$\supp(u)\cup\supp(v)\cup\supp(t)=\supp(f)$. Hence the bottom row of~$N(jd)$ is $f$.
If $a\triangleleft e$ and $j\triangleleft d$, then
$r_a r_e r_f$ and $r_i r_j r_d$ are in normal form. Uniqueness gives
$i=a$, $j=e$, and $d=f$. This is \textbf{Case~$4b1$}.
If $a\triangleleft e$ and $j\not\triangleleft d$, then the left branch is
already in normal form at $r_a r_e r_f$. Since $i\neq\varepsilon$ and
$j\not\triangleleft d$, the right-branch analysis in the proof of
Proposition~\ref{Prop:boundedBranching} shows that~$N(jd)$ has two rows
and, writing $\R(N(jd))=sf$, that $i\triangleleft s$. Thus
$r_i r_s r_f$ is in normal form. Uniqueness gives $i=a$ and $s=e$.
This is \textbf{Case~$4b2$}.
If $a\not\triangleleft e$ and $j\triangleleft d$, then the right branch is
already in normal form at~$r_i r_j r_d$, which has three rows. Hence the
reduction of $(a,e)$ on the left must produce two rows. Write~$\R(N(ae))=pq$ with~$p\neq\varepsilon$. The left-branch analysis in the
proof of Proposition~\ref{Prop:boundedBranching} gives
$q\triangleleft f$, so~$r_p r_q r_f$ is in normal form. Uniqueness gives
$p=i$, $q=j$, and $f=d$. This is \textbf{Case~$4b3$}.
Finally, if $a\not\triangleleft e$ and $j\not\triangleleft d$, then both
branches take one more step. Since $i\neq\varepsilon$ and
$j\not\triangleleft d$, the right-branch analysis in the proof of
Proposition~\ref{Prop:boundedBranching} shows that $N(jd)$ has two rows
and, writing $\R(N(jd))=sf$, that $i\triangleleft s$. Thus~$r_i r_s r_f$ is a three-row normal form. Hence the reduction of
$(a,e)$ on the left must also produce two rows. Write~$\R(N(ae))=pq$ with~$p\neq\varepsilon$. The left-branch analysis in the
proof of Proposition~\ref{Prop:boundedBranching} gives~$q\triangleleft f$, so~$r_p r_q r_f$ is also in normal form.
Uniqueness gives $p=i$ and $q=s$. This is \textbf{Case~$4b4$}.

This proves that all displayed diagrams are well defined and that their common
target is $\operatorname{NF}(u,v,t)$.

\begin{remark}
\label{R:AllCasesRealized}
For $n\geq 3$, each of the thirteen subcases appearing in the case analysis is
realized by at least one critical triple $(u,v,t)\in\Rr_n^3$. For instance, one
may take
\[
\begin{array}{|c|c|c|c|c|c|c|c|}
\hline\text{Case}
& 1 & 2\mathrm a & 2\mathrm b
& 3\mathrm{a1} & 3\mathrm{a2} & 3\mathrm{b1} & 3\mathrm{b2}
\\ \hline
(u,v,t)
& (1,1,1)
& (2,2,1)
& (1,2,1)
& (2,1,2)
& (12,1,2)
& (2,1,12)
& (2,1,1)\\\hline
\end{array}
\]
\[
\begin{array}{|c|c|c|c|c|c|c|}
\hline
\text{Case}
& 4\mathrm{a1} & 4\mathrm{a2}
& 4\mathrm{b1} & 4\mathrm{b2} & 4\mathrm{b3} & 4\mathrm{b4}
\\ \hline
(u,v,t)
& (2,13,2)
& (12,12,1)
& (3,12,123)
& (3,12,12)
& (3,2,13)
& (3,2,1)\\\hline
\end{array}
\]
where a word such as $12$ or $123$ denotes the corresponding row.
\end{remark}

\quad

\begin{small}
\renewcommand{\refname}{\Large\textsc{References}}
\bibliographystyle{plain}
\bibliography{biblioCURRENT}
\end{small}

\vfill
\begin{flushright}
\begin{small}
\noindent\textsc{Nohra Hage}\\
\href{mailto:nohra.hage@univ-catholille.fr}
     {\nolinkurl{nohra.hage@univ-catholille.fr}}\\
ICL, Junia, Université Catholique de Lille,\\
LITL, F-59000 Lille, France
\end{small}
\end{flushright}

\vspace{0.25cm}

\begin{small}---\;\;\today\;\;-\;\;\hhmm\;\;---\end{small} \hfill

\end{document}